\numberwithin{figure}{section}
\newtheorem{theorem}{Theorem}[section]
\newtheorem{lemma}[theorem]{Lemma}
\newtheorem{proposition}[theorem]{Proposition}
\theoremstyle{definition}
\newtheorem{definition}[theorem]{Definition}
\newtheorem{remark}[theorem]{Remark}
\numberwithin{equation}{section}
\newcommand{\abs}[1]{\lvert #1 \rvert}
\newcommand{\R}{\mathbb{R}}
\newcommand{\C}{\mathbb{C}}
\newcommand{\N}{\mathbb{N}}
\newcommand{\Si}{\mathbb{S}^1}
\newcommand{\Ma}{\mathbb{M}}
\newcommand{\To}{\mathbb{T}}
\newcommand{\dist}{{\rm dist}}
\newcommand{\eps}{\varepsilon}
\renewcommand{\sharp}{{\#}}
\newcommand{\norm}[1]{{\lVert #1\rVert}}
\newcommand{\F}{{\mathcal{F}}}
\newcommand{\Ha}{{\mathcal{H}}}
\newcommand{\A}{{\mathcal{A}}}
\newcommand{\diver}{\operatorname{div}}
\newcommand{\vect}[1]{\underline{#1}}
\newcommand{\lip}{\mathrm{Lip}}
\newcommand{\loc}{\mathrm{loc}}
\newcommand{\reg}{\mathrm{reg}}
\newcommand{\ds}{\displaystyle}
\newcommand{\id}{\mathrm{id}}
\renewcommand{\MR}[1]{\null}
\begin{document}

\title[A minimality criterion for voids in elastic bodies]
{A quantitative second order minimality criterion \\for cavities in elastic bodies}

\author{Giuseppe Maria Capriani}
\address{Dipartimento di Matematica e Applicazioni ''R. Cacciopoli'', Università degli Studi di Napoli ''Federico II'', Napoli, Italy}
\email{giuseppe.capriani@gmail.com}

\author{Vesa Julin}
\address{Dipartimento di Matematica e Applicazioni ''R. Cacciopoli'', Università degli Studi di Napoli ''Federico II'', Napoli, Italy}
\email{vesa.julin@jyu.fi}

\author{Giovanni Pisante}
\address{Dipartimento di Matematica, Seconda Università di Napoli, Caserta, Italy}	
\email{giovanni.pisante@unina2.it}

\keywords{Calculus of Variations, Second order minimality conditions, Free discontinuity problems}
\subjclass[2010]{Primary 74G55, Secondary 74G40, 74G65, 49Q20}
\date{\today}

\begin{abstract}
We consider a functional which models an elastic body with a cavity. We show that if
a critical point has positive second variation then it is a strict local minimizer. We also provide a quantitative
estimate.
\end{abstract}
\maketitle

\section{Introduction}

The role of roughness appearing onto the surfaces and interfaces of nano-structures has been proved to be of great
significance in several fields such as micro-electronics, metallurgy and materials science. For instance the roughness
can strongly modify the mechanical properties of multilayered structures as confirmed by the observation that
dislocations, islands and cracks can be generated from a rough surface (see \cite{Colin}). Many efforts have been
devoted to the investigation on how to control the roughness appearing onto the surfaces and interfaces of
nano-structures, leading to the study of the so-called Driven Rearrangement Instability, i.e., the  morphological
surfaces
instability of interfaces between solids generated by elastic stress. This phenomenon has been detected, for instance,
in
hetero-epitaxial growth of thin films with a lattice mismatch between film and substrate and in stressed elastic solids
with cavities.

The theoretical investigation of the stability of the free surface of a planar non-hydrostatically stressed solid has
been performed in the pioneering papers by Asaro and Tiller \cite{Asar} and Grinfeld \cite{grinfeld-2}.
These authors showed that the free surface is unstable with respect to a given family of sinusoidal fluctuations. They
also
gave a first insightful description of the phenomenon, nowadays named Asaro-Grinfeld-Tiller instability, in
which a thin film growing on a flat substrate remains flat up to a critical value of the thickness, after which, the free surface becomes unstable developing corrugations and irregularities.
This instability is explained as a consequence of the presence of two competing energies, usually identified with a bulk
elastic energy and a surface energy. After these results the interest of the scientific community on the rigorous
mathematical study of the morphological instabilities has rapidly grown. Starting from the paper \cite{grinfeld}
where Grinfeld  follows the Gibbs variational approach to model the morphology of thin films, it became clear that a
second order variational analysis could be successfully used.
This approach has been used in the context of epitaxial growth first for a one dimensional model in \cite{Bonne}.
Then in \cite{Bonne-2} and \cite{Fonseca:2007bj} the
model introduced in \cite{grinfeld}, which is a more realistic two-dimensional model, corresponding to three-dimensional
configurations with planar symmetry, is studied and the problem of finding a proper functional setting is successfully addressed. This settled the framework in which a precise and detailed analysis of
qualitative properties of regular equilibrium configurations has been carried out by Fusco and Morini in \cite{Fusco:2009ug} via
a second order variational analysis. Indeed they prove a sufficient condition for local minimality in terms of the
positivity of second variation and provide a sufficiently complete picture of the phenomena that occur in
epitaxially-growing thin films.

Such detailed analysis is instead far from being complete in the framework of stressed elastic solids with cavities. In this paper we perform a second order variational analysis for a two-dimensional variational model that has been recently used to
describe surface instability in morphological evolution of cavities in stressed solids (see for instance
\cites{Gao,sieg-miks-voor,Wang}) with the aim of deriving new minimality conditions for equilibria and studying their stability.
The model can be roughly described as follows. Consider a cavity in an elastic solid, that
will be identified with a smooth compact set $F\subset \R^{2}$, starshaped with respect to the origin. The solid region
is assumed to obey to the classical law of linear elasticity, so that the bulk energy can be written in the form 
\[
 \int_{B_{R_0} \backslash F} Q(E(u)) \, dz,
\]
where  $E(u)$ is the symmetric gradient of the elastic displacement $u$ and $Q$ is a bilinear form depending on the
material (see Section \ref{Preliminaries} for details). The surface energy is simply assumed to be the length of the
boundary of $F$. Then the energy for a regular configuration is expressed by the functional
\begin{equation*}
 \F(F,u) :=  \int_{B_0 \backslash F} Q(E(u)) \, dz +  \Ha^{1}(\partial F )\,.
\end{equation*}
In this framework the shape of the void plays a key role in the evolution of cavities in stressed solid bodies, while
the effects of the volume changes are negligible. Hence, one usually assumes that the void evolves preserving its
volume.
The equilibria are therefore identified with minimizers of $\F(F,u)$ under the volume constraint $\abs{F}=d$. Since admissible configurations need not to be regular, the energy of such configurations has to be defined via a relaxation procedure. This issue, together with the study of the regularity of minima, has been addressed (even for
more general functionals involving anisotropic surface energies) in \cite{FFLmill} where, in order to keep track of the
possible appearance of cracks, the relaxed functional with respect to the Hausdorff convergence has been studied. The
relaxed functional can be expressed in the following form:
\begin{equation}\label{functional-relaxed}
 \F(F,u) :=  \int_{B_0 \backslash F} Q(E(u)) \, dz +  \Ha^{1}(\Gamma_{F})+2 \Ha^{1}(\Sigma_{F})\,,
\end{equation}
where $F$ has finite perimeter, $\Gamma_{F}$ is the ``regular'' part of $\partial F$ and $\Sigma_{F}$ represents the
cracks (see Section
\ref{Preliminaries}).  

The main result of the paper is a quantitative minimality criterion that relies on the study of the second variation of the functional \eqref{functional-relaxed}.  To be more precise we prove in Theorem \ref{TH:criterion} that if $(F,u)$ is a smooth critical configuration and the non local quadratic form 
$\partial^{2}\F(F,u)$ associated to the second variation of $\F$ at $(F,u)$    is  positively defined, then there exists
a constant $c_{0}$ such that 
\begin{equation}\label{result}
\F(G,v) > \F(F,u) +  c_0 \abs{G\Delta F} ^2
\end{equation}  
for any given admissible configuration $(G,v)$ 
with $G$ sufficiently close to $F$ in the Hausdorff distance
and $G\neq F$. In particular this implies not only that $(F,u)$ is a strict local minimizer of
\eqref{functional-relaxed} but also
provides a quantitative estimate of the deviation from minimality for configurations close to $(F,u)$ in the spirit
of the recent result obtained in \cite{AcFuMo}. The minimality criterion is then applied to the case of a disk subjected to radial stretching where the second variation can be explicitly estimated to prove the local
and global minimality of the round
configuration if the applied stress is sufficiently small.

We point out that an important open problem is how to remove the assumption of starshapedness. Indeed, even the explicit
form of the relaxed functional is unknown.

We conclude by outlining the structure of the paper and making some comments about the proofs. In Section \ref{sec:2nd} we
calculate the second variation of $\F$ at any regular configuration (see Theorem \ref{2d-var:formula}) and we exploit
the volume constraint to define the associated quadratic form in a critical configuration. At the end of the section in
Lemma \ref{easylemma} we prove a ``weak'' coercivity property of $\partial^{2}\F(F,u)$ in a critical point, which is the
first step towards the proof of Theorem \ref{TH:criterion}. In
Section \ref{Sec:4}, as an intermediate step, we prove that the positivity of the second variation implies the local
minimality among configurations $(G,v)$ for $G$  close to $F$ in the $C^{1,1}$-topology. The main point in achieving
this result is to overcome the lack of
$C^{1,1}$-coercivity, which would immediately imply the result. This is done by proving the
stability of the weak coercivity with respect to a one-parameter perturbation of the critical configuration (see
Lemma \ref{2ndpostive}). In section \ref{Sec:5} we exploit the regularity theory for a class of obstacle problems which
arise as perturbations of \eqref{functional-relaxed}  to show that the $C^{1,1}$-minimality actually implies the
minimality with respect to the Hausdorff distance, thus proving the theorem. In the last section we apply the previous
analysis to the explicit case of a disk subjected to a radial stretching.

\section{Preliminaries}\label{Preliminaries}
In this section we fix the notation and describe precisely the required background for our analysis.
We are interested in cavities identified as closed sets $F$ with $\Ha^1(\partial F)<+\infty$ and starshaped with respect
to the origin. The fact that $F$ is starshaped allows us to describe it as a subgraph of a function. Since $F$
has finite perimeter, the function associated to its boundary turns out to have bounded pointwise total variation. This
will allow us to deal with functions rather than sets.

We denote by $\Si$ the unit circle in $\R^2$ and by $\sigma:\R\to\Si$ the local diffeomorphism defined by
$\sigma(\theta)=(\cos \theta,\sin \theta)$, by $\sigma^{-1}$ its local inverse and by
$\sigma^\perp(\theta)=(\sin\theta,-\cos\theta)$ its orthogonal. We set $C^2_\sharp(\R)$ to be the
collection of functions in $C^2(\R)$ that are $2\pi$-periodic. In a similar way we shall define the function spaces
$H^1_\sharp(\R)$, etc.

With a slight abuse of notation we set 
\begin{equation}
\label{BV_sharp}
BV_{\sharp}(\R) := \{  g: \R \to [0,R_0] \mid g \,\, \text{is upper semicontinuous}, \, 2\pi\text{-periodic and}\,\,
pV(g, [0, 2 \pi])
<
\infty\},
\end{equation}
where $pV(g, [0, 2 \pi])$  is the pointwise total variation of $g$ in $[0, 2 \pi]$ and $R_0$ is the radius of a large
ball $B_{R_{0}}$. For
a function $g \in
BV_{\sharp}(\R)$ we define the
extended graph
of $g$ as $\Gamma_g\cup \Sigma_g$, where
\begin{equation}
\label{Gamma_g}
\Gamma_g := \{ \rho \sigma(\theta)\in\R^2 \mid  g^-(\theta) \leq \rho \leq g^+(\theta),\, \theta \in \R \}
\end{equation}
and 
\begin{equation}
\label{Sigma_g}
\Sigma_g = \overline{\{  \rho \sigma(\theta)\in\R^2 \mid   g^+(\theta) < \rho < g(\theta),\, \theta \in \R \}}\,.
\end{equation}
Here $g^-(\theta) :=  \liminf_{\tilde{\theta} \to \theta}g(\tilde{\theta})$ and $g^+(\theta) :=  \limsup_{\tilde{\theta}
\to \theta}g(\tilde{\theta})$. We shall refer to $\Sigma_g$ as the \textit{set of cracks}.

Let us consider  a compact set $F\subset\overline{B}_{R_0}$ starshaped with respect to the origin. Then, for
$\sigma\in\Si$, we
can write
\[
F=\{r\sigma(\theta)\in\R^2 \mid \theta\in\R,\, 0\leq r\leq \rho_F(\theta)\}\,, 
\]
where $\rho_F$ is the \textit{radial function} of $F$ and is defined by
\[
\rho_F(\theta) := \sup \,  \{ \rho\in\R \mid \rho \sigma (\theta) \in F\}\,.
\]
It is clear that $\rho_F: \R \to [0, R_0]$ is upper semicontinuous. Moreover we have the following result, see 
\cite{FFLmill}*{Lemmata 2.2 and 2.3}.  
\begin{lemma}
Let $F \subset \bar{B}_{R_0}$ be a closed set starshaped with respect to the origin and let $\rho_F$ be the radial
function of $F$. Then 
\[
\partial F = \Gamma_{\rho_F} \cup \Sigma_{\rho_F}.
\]
Moreover $\Ha^1(\partial F) < + \infty$ if and only if $\rho_F$ has finite pointwise total variation. 
\end{lemma}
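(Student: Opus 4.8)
The statement to prove is the cited lemma characterizing $\partial F$ for a compact starshaped set $F$.

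\medskip

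The plan is to work directly with the radial function $\rho_F$ and establish the two set-theoretic identities separately. First I would prove $\partial F = \Gamma_{\rho_F}\cup\Sigma_{\rho_F}$. For the inclusion $\Gamma_{\rho_F}\cup\Sigma_{\rho_F}\subset\partial F$: a point $\rho\sigma(\theta)$ with $\rho_F^-(\theta)\le\rho\le\rho_F^+(\theta)$ is a limit of points $\rho_F(\theta_k)\sigma(\theta_k)\in F$ (using the definition of $\liminf$ and $\limsup$ together with starshapedness to fill in intermediate radii), so it lies in $\overline F=F$; on the other hand any ball around it contains points $r\sigma(\theta)$ with $r>\rho_F(\theta)$, which by definition of $\rho_F$ as the supremum are not in $F$ — so the point is in $\partial F$. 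The crack set $\Sigma_{\rho_F}$ is handled by its definition as a closure of points squeezed between $\rho_F^+(\theta)$ and $\rho_F(\theta)$, together with the upper semicontinuity of $\rho_F$ which guarantees $\rho_F\ge\rho_F^+$ pointwise. For the reverse inclusion $\partial F\subset\Gamma_{\rho_F}\cup\Sigma_{\rho_F}$: take $z=\rho\sigma(\theta)\in\partial F$. Since $z\in F$ we have $\rho\le\rho_F(\theta)$; since $z\notin\mathrm{int}(F)$, by starshapedness $\rho$ cannot be strictly less than $\rho_F^-(\theta)$ (a radius below the $\liminf$ would have a whole neighbourhood of angles with radial function exceeding it, giving an interior point). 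Splitting according to whether $\rho\le\rho_F^+(\theta)$ or $\rho_F^+(\theta)<\rho\le\rho_F(\theta)$ places $z$ in $\Gamma_{\rho_F}$ or $\Sigma_{\rho_F}$ respectively.

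\medskip

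For the second assertion, the equivalence $\Ha^1(\partial F)<+\infty\iff pV(\rho_F,[0,2\pi])<\infty$, I would argue through two inequalities relating the one-dimensional Hausdorff measure of the extended graph to the pointwise variation. The ``only if'' direction is the more delicate one: given a partition $0\le\theta_0<\dots<\theta_N\le 2\pi$, one wants to bound $\sum_i|\rho_F(\theta_{i+1})-\rho_F(\theta_i)|$ by $C\,\Ha^1(\partial F)$; this uses that, between consecutive sample angles, $\partial F$ must contain an arc (or a radial crack segment) connecting the corresponding boundary points, and that the Euclidean length of a connected piece of $\partial F$ separating $r\sigma(\theta_i)$ from $r\sigma(\theta_{i+1})$ controls the radial oscillation from below. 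For the ``if'' direction one estimates $\Ha^1(\Gamma_{\rho_F})$ and $\Ha^1(\Sigma_{\rho_F})$ in polar coordinates: $\Gamma_{\rho_F}$ is, up to the jump set, a Lipschitz-type graph in polar coordinates whose length is controlled by $\int_0^{2\pi}\sqrt{\rho_F^2+(\rho_F')^2}\,d\theta$ plus the jump contributions, all finite once $pV(\rho_F)<\infty$, while $\Sigma_{\rho_F}$ consists of radial segments whose total length is dominated by the jump part of the variation.

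\medskip

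The main obstacle I expect is the careful bookkeeping at the jump points of $\rho_F$: reconciling the three relevant values $\rho_F^-(\theta)\le\rho_F^+(\theta)\le\rho_F(\theta)$, keeping track of which of the one-sided limits is attained from which side, and ensuring that the crack segments $\Sigma_{\rho_F}$ are counted with the correct multiplicity (they are traversed twice by $\partial F$, which is exactly why the factor $2$ appears in front of $\Ha^1(\Sigma_{\rho_F})$ in \eqref{functional-relaxed}, though that multiplicity plays no role in the mere finiteness statement here). The geometric measure theory needed — relating $\Ha^1$ of a connected separating set to a coordinate oscillation — is standard but must be applied with care given that $\partial F$ need not be a graph and may have a genuinely two-dimensional-looking set of cracks only after taking closures. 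Since the lemma is quoted verbatim from \cite{FFLmill}*{Lemmata 2.2 and 2.3}, I would in practice simply cite it; the sketch above indicates the route a self-contained proof would follow.
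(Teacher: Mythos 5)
There is nothing to compare against inside the paper: this lemma is stated without proof and attributed to \cite{FFLmill}*{Lemmata 2.2 and 2.3}, so your sketch has to be judged against the standard argument of that reference, and in outline it does follow the same route (identify $\partial F$ with the extended graph $\Gamma_{\rho_F}\cup\Sigma_{\rho_F}$ using starshapedness and upper semicontinuity, then prove two inequalities between $\Ha^1(\partial F)$ and $pV(\rho_F)$, estimating the polar graph plus radial cracks from above and the radial oscillation from below). Your closing remark that in the paper one would simply cite the reference is exactly what the authors do.

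Two points in the sketch need repair, though. First, in the inclusion $\Gamma_{\rho_F}\subset\partial F$ your justification that ``any ball around $\rho\sigma(\theta)$ contains points $r\sigma(\theta)$ with $r>\rho_F(\theta)$'' is false whenever $\rho<\rho_F(\theta)$: such points lie at distance at least $\rho_F(\theta)-\rho$ from the point in question, so a small ball contains none of them, and on the ray $\theta$ itself the nearby points of larger radius are still in $F$. The exterior points must be produced on \emph{nearby} rays: choose $\theta_k\to\theta$ with $\rho_F(\theta_k)\to\rho_F^-(\theta)\le\rho$ and take radii slightly above $\max\{\rho,\rho_F(\theta_k)\}$; the same-ray argument only works at the topmost point $\rho=\rho_F(\theta)$. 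Second, for the ``only if'' direction you invoke a connected arc of $\partial F$ joining the two boundary points, which is more than you need and not immediate (connectedness of $\partial F$ is not part of the hypotheses). The clean argument is an intermediate-value one: for every $r$ strictly between $\rho_F(\theta_i)$ and $\rho_F(\theta_{i+1})$, the circular arc of radius $r$ spanning the two angles meets both $F$ and its complement, hence meets $\partial F$; since the radial projection $z\mapsto|z|$ is $1$-Lipschitz, this gives $|\rho_F(\theta_{i+1})-\rho_F(\theta_i)|\le\Ha^1\bigl(\partial F\cap S_i\bigr)$ for the closed sector $S_i$, and summing over the essentially disjoint sectors bounds $pV(\rho_F)$ by $C\,\Ha^1(\partial F)$. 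A minor further caveat: $\rho_F^{\pm}$ are the two-sided $\liminf$/$\limsup$, not one-sided limits, so the delicate bookkeeping is the gap $\rho_F(\theta)-\rho_F^{+}(\theta)$ at the (countably many) exceptional angles, which is what generates $\Sigma_{\rho_F}$ and is controlled by the pointwise variation there; in the ``if'' direction one should also note that taking the closure in the definition of $\Sigma_{\rho_F}$ does not increase $\Ha^1$ beyond these summable radial segments. With these adjustments the proposed proof goes through.
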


The previous lemma rigorously shows that we may use radial functions instead of sets. Hence, for $g \in BV_{\sharp}(\R)$
we set 
\[
F_g := \{ \rho \sigma(\theta)\in\R^2  \mid 0 \leq \rho \leq g(\theta)\}\quad \text{\ and\ } \quad
\Omega_g := B_{R_0} \setminus F_g.
\]
We may think of $F_g$ as the void and of $\Omega_g$ as the elastic solid.

We can now define properly the space of admissible pairs. Given $u_0\in C^\infty(\R^2\setminus B_{R_0})$ we set
\begin{equation}
\label{admissible.pair}
X(u_0) = \{   (g,v) \mid g  \in BV_{\sharp}(\R), \, v \in H_{\loc}^1(\R^2 \setminus F_g; \R^2), \, v \equiv u_0 \,\,
\text{outside} \,\, B_{R_0} \}\,,
\end{equation}
and we shall use the notation $X(0)$ for $u_0 \equiv 0$. We define also the following subspaces of $X(u_0)$ 
\begin{equation}
\label{admissible.regular}
\begin{split}
X_{\lip}(u_0) &:= \{   (g,v)  \in X(u_0) \mid g\,\, \text{is Lipschitz} \}, \\
X_{\reg}(u_0) &:= \{   (g,v) \in X(u_0) \mid g  \in C^{\infty}_\sharp(\R), \, v \in C^{\infty}(\bar{\Omega}_g) \}.
\end{split}
\end{equation}
We are now in position to give the proper definition  of convergence in $X(u_0)$.
\begin{definition}
\label{conergence.X}
A sequence $(g_n,v_n) \subset X(u_0)$ is said to converge to $(g,v)$ in $X(u_0)$ and we write $(g_n,v_n) 
\overset{X}{\longrightarrow}(g,v)$ if
\begin{enumerate}
\item $\displaystyle \sup_{n\in\N}\Ha^{1}(\partial F_{g_n})<+\infty\,,$
\item $\displaystyle  F_{g_n}\to F_g$ in Hausdorff metric,
\item $\displaystyle v_{n}\rightharpoonup v$ weakly in $H^{1}(\omega;\R^{2})$ for any open set $\omega$
compactly contained in $ \R^{2}\setminus F_g$.
\end{enumerate} 
\end{definition}

In view of \cite{FFLmill}*{Lemma 2.6}, we see that $X(u_0)$ is closed under the convergence of Definition
\ref{conergence.X}.  

The \textit{elastic energy density} is defined by $Q(E(u)):= \frac{1}{2}\C E(u):E(u)$, where $\C$ is the fourth order
tensor
\[
\C \xi := \begin{pmatrix} (2 \mu + \lambda )\xi_{11} + \lambda \xi_{22} & 2 \mu \xi_{12} \\
			   2 \mu \xi_{12}  & (2 \mu + \lambda )\xi_{22} + \lambda \xi_{11}
\end{pmatrix}
\]
and $E(u)$ is the symmetric gradient of $u$
\[
E(u) := \frac{1}{2}(Du + (Du)^T).
\]
The constants $\mu, \lambda$ are called the \textit{Lamé coefficients} and they are assumed to satisfy the
following ellipticity conditions 
\[
\mu >0 \quad \text{and} \quad \lambda > - \mu.
\]
Since $Q(\xi) \geq \min\{\mu, \mu + \lambda \} |\xi|^2$ for every symmetric $2 \times 2$ matrix $\xi$, the above
conditions guarantee that $Q$ is coercive. We also set the ellipticity constant
\[
\eta :=  \min\{\mu, \mu + \lambda \}.
\]

For a pair $(g,v) \in X_\lip(u_0)$ we may write the value of the functional \eqref{functional-relaxed} as
\[
\F(g,v) = \int_{\Omega_g} Q(E(v))\, dz +\Ha^1(\Gamma_g).
\]
Since this functional is not lower semicontinuous with respect to the convergence in $X(u_0)$, in order to
effectively address the minimization problem we consider the relaxed functional
\[
\bar{\F}(g,v)= \inf \{  \liminf_{n \to \infty} \F(g_n,v_n) \mid  (g_n,v_n) \in X_{\lip}(u_0), \,  (g_n,v_n) 
\overset{X}{\longrightarrow}(g,v) \}.
\]
The following integral representation of $\bar{\F}$ is proved in \cite{FFLmill}*{Theorem 3.1}, where the more general
case of anisotropic surface energy is also considered.
\begin{theorem}
\label{relaxed.f}
Let $(g,v) \in X(u_0)$, then 
\[
\bar{\F}(g,v)= \int_{\Omega_g} Q(E(v))\, dz +\Ha^1(\Gamma_g) + 2 \Ha^1(\Sigma_g).
\]
\end{theorem}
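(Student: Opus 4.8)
The plan is to establish the integral representation of $\bar\F$ in two parts: the liminf inequality (lower bound) and the construction of a recovery sequence (upper bound). For the \emph{lower bound}, suppose $(g_n,v_n)\in X_\lip(u_0)$ with $(g_n,v_n)\overset{X}{\to}(g,v)$. Since $g_n$ is Lipschitz, $\Sigma_{g_n}=\emptyset$ and $\F(g_n,v_n)=\int_{\Omega_{g_n}}Q(E(v_n))\,dz+\Ha^1(\Gamma_{g_n})$. The bulk term passes to the liminf by weak $H^1$-convergence on compactly contained open sets together with the convexity and nonnegativity of $Q$: for each $\omega\Subset\Omega_g$ one has $\int_\omega Q(E(v))\,dz\le\liminf_n\int_\omega Q(E(v_n))\,dz\le\liminf_n\int_{\Omega_{g_n}}Q(E(v_n))\,dz$, and then take the supremum over $\omega$. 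The surface term requires showing $\Ha^1(\Gamma_g)+2\Ha^1(\Sigma_g)\le\liminf_n\Ha^1(\partial F_{g_n})$; here one exploits that $\partial F_{g_n}\to\partial F_g$ in the Hausdorff sense, that the $\partial F_{g_n}$ are graphs (rectifiable curves winding once around the origin), and that a portion of $\partial F_g$ that is a ``crack'' $\Sigma_g$ must be approximated from both sides, so the approximating curves accumulate length $2\Ha^1$ there — this is the standard phenomenon that a slit is a limit of thin necks of perimeter twice the slit length. Concretely, parametrize by the angle and use lower semicontinuity of the pointwise total variation together with the fact that $g_n^\pm\to g^\pm,g$ appropriately, or invoke a Golab-type theorem for the length functional under Hausdorff convergence of connected sets; this portion is the technical heart, and the main obstacle is bookkeeping the length contribution on $\Sigma_g$ rigorously. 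Since this is \cite{FFLmill}*{Theorem 3.1}, I would in practice cite it, but the self-contained argument runs as above.

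For the \emph{upper bound}, given $(g,v)\in X(u_0)$ with $\bar\F(g,v)<\infty$ (otherwise nothing to prove), I must construct $(g_n,v_n)\in X_\lip(u_0)$ with $(g_n,v_n)\overset{X}{\to}(g,v)$ and $\limsup_n\F(g_n,v_n)\le\int_{\Omega_g}Q(E(v))\,dz+\Ha^1(\Gamma_g)+2\Ha^1(\Sigma_g)$. The idea is to fatten the void: replace $F_g$ by a slightly larger starshaped set $F_{g_n}$ whose boundary is a Lipschitz graph $g_n\le g$ that hugs $\Gamma_g$ and ``opens up'' each crack of $\Sigma_g$ into a thin two-sided channel of width $\sim 1/n$, so that $\Ha^1(\Gamma_{g_n})\to\Ha^1(\Gamma_g)+2\Ha^1(\Sigma_g)$ while $F_{g_n}\to F_g$ in Hausdorff distance and $|F_{g_n}\setminus F_g|\to 0$. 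One realizes $g_n$ explicitly by truncating and mollifying: since $g\in BV_\sharp$, take $g_n:=$ a Lipschitz function with $g_n\le g$, $g_n\to g$ in $L^1$, $\Gamma_{g_n}\to\Gamma_g\cup\Sigma_g$ in Hausdorff distance, and $pV(g_n)\to pV(g)+2\Ha^1(\Sigma_g)/(\text{angular weight})$ — more precisely $\Ha^1(\Gamma_{g_n})\to\Ha^1(\Gamma_g)+2\Ha^1(\Sigma_g)$ by the jump-smoothing estimate (a jump of height $h$ in the radial function located at angle $\theta_0$, when replaced by a steep Lipschitz transition, contributes length converging to $2h$ times the radial factor, i.e. twice the ``vertical'' length, matching $2\Ha^1(\Sigma_g)$). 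For the elastic field, extend $v$ from $\Omega_g$ to the slightly smaller domain $\Omega_{g_n}\supset\Omega_g$: since $v\in H^1_\loc(\R^2\setminus F_g)$ and $\Gamma_g$ is only BV-regular, one uses a reflection/extension across $\Gamma_g$ on the thin added strip $\Omega_{g_n}\setminus\Omega_g$ whose measure vanishes, so that $\int_{\Omega_{g_n}}Q(E(v_n))\,dz\to\int_{\Omega_g}Q(E(v))\,dz$; keeping $v_n\equiv u_0$ outside $B_{R_0}$ is automatic since we only modify near $\partial F_g$.

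Finally I would check the three conditions of Definition \ref{conergence.X} for the constructed sequence: uniform perimeter bound follows from $\Ha^1(\partial F_{g_n})=\Ha^1(\Gamma_{g_n})$ being bounded by (something converging to) $\Ha^1(\Gamma_g)+2\Ha^1(\Sigma_g)+o(1)<\infty$; Hausdorff convergence $F_{g_n}\to F_g$ is built into the construction; and $v_n\rightharpoonup v$ weakly in $H^1(\omega)$ for $\omega\Subset\R^2\setminus F_g$ holds because for $n$ large $\omega\Subset\Omega_{g_n}$ and $v_n=v$ there (or differs only on a vanishing strip). Combining the lower and upper bounds gives $\bar\F(g,v)=\int_{\Omega_g}Q(E(v))\,dz+\Ha^1(\Gamma_g)+2\Ha^1(\Sigma_g)$, which is the claim. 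The single hardest point is the precise length accounting on the crack set $\Sigma_g$ in both inequalities — showing the factor is exactly $2$, neither more (achieved by the two-sided channel construction) nor less (forced by Hausdorff convergence of the boundary curves) — and for this the cleanest route in the paper is simply to invoke \cite{FFLmill}*{Theorem 3.1}, which is what the statement does.
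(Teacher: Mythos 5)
The paper itself gives no proof of this statement: it simply records the representation as \cite{FFLmill}*{Theorem 3.1}, so your fallback of invoking that citation is exactly what the paper does, and on that level your proposal is in line with it. Your additional self-contained sketch is a reasonable outline of the standard relaxation argument (lower semicontinuity of the bulk term under weak $H^1$-convergence on $\omega\Subset\Omega_g$, a Go\l{}\k{a}b-type length estimate forcing the factor $2$ on $\Sigma_g$, and a recovery sequence opening each crack into a thin two-sided channel), but one detail as written is inconsistent: you require the Lipschitz graphs to satisfy $g_n\le g$ while also having $F_{g_n}\to F_g$ in the Hausdorff metric. A crack is a radial spike of $F_g$ protruding beyond $\Gamma_g$, and Hausdorff convergence forces $F_{g_n}$ to reach up to its tip; a Lipschitz $g_n$ doing so must exceed $g$ at nearby angles, so $g_n\le g$ cannot hold. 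The usual construction instead places a thin Lipschitz ``tent'' around each crack, so that $F_{g_n}\supset F_g$ locally, $\Omega_{g_n}\subset\Omega_g$, and the elastic field is obtained by restriction of $v$ (no reflection/extension across the merely $BV$-regular boundary is needed, which is fortunate since such an extension with controlled energy is not available at that regularity); with that correction your bookkeeping of the length, giving exactly $\Ha^1(\Gamma_g)+2\Ha^1(\Sigma_g)$, goes through, and in any case the clean route remains the citation to \cite{FFLmill}.
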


From now on we will always deal with the relaxed functional appearing in Theorem \ref{relaxed.f} and with abuse of
notation we will denote it simply by $\F(g,v)$. The minimization problem can now be properly stated as
\begin{equation}
\label{min.problem}
\min \{ \F(g,v) \mid (g,v) \in X(u_0) , \,\, |\Omega_g| = d\}
\end{equation}
for some given constant $d < |B_{R_0}|$. Existence of solutions of the problem \eqref{min.problem} is then ensured by
\cite{FFLmill}*{Theorem 3.2}.

Given $g \in BV_{\sharp}(\R)$ there is one
particular elastic displacement $v$ which is the minimizer of the elastic energy $\int_{\Omega_g} Q(E(v))\, dz$ under
the
boundary condition $v \equiv u_0$ outside $B_{R_0}$. We call this map \emph{the elastic equilibrium} associated to $g$.
If $(h,u) \in X(u_0)$ solves \eqref{min.problem} then $u$ has to be the elastic equilibrium
associated with $h$. 

Assume now that a solution $(h,u)$ belongs to $X_\reg(u_0)$ and $h>0$, then
$(h,u)$ satisfy the Euler-Lagrange equations 
\begin{equation}
\label{euler}
\left\{
\begin{aligned}
\diver \C(E(u)) &= 0 &&\text{in }\Omega_h \\
\C(E(u))[\nu] &= 0  &&\text{on } \Gamma_h \\
 Q(E(u)) - k_h &= \text{const.} &&\text{on\ } \Gamma_h,\\
\end{aligned}
\right.
\end{equation}
where $k_h$ is the curvature of $\Gamma_h$. The first two equations are standard whereas the third one is the first
variation of the functional \eqref{functional-relaxed}. This motivates the following definition.
\begin{definition}
\label{critical}
A pair $(h,u)\in X_{\reg}(u_0)$ is said to be critical if  it solves the equations \eqref{euler}.
\end{definition}

We remark that if $(h,u)$ is a critical pair, then from the first two equations in \eqref{euler} it follows
that $u$ is the elastic equilibrium associated to $h$. We also point out that in the definition of a critical point we
only need to assume $h$ to be smooth. Indeed, if we only assume $(h,u)\in X(u_0)$ and $h \in C^{\infty}(\R)$, then it
follows from the standard elliptic regularity theory (see \cite{agmon}) that $u \in C^{\infty}(\bar{\Omega}_h)$.

The regularity for minimizers of  \eqref{min.problem} was studied in \cite{Fonseca:2007bj} and
the following result holds. If the pair $(h,u)$ is a local minimizer of \eqref{min.problem} and $0<h<R_0$ then there
exists an open set $I \subset [0, 2 \pi)$ of full measure such that  $h \in C^{\infty}(I)$. In fact $h$ is even analytic
in $I$. Hence our regularity  assumption on a critical point in Definition \ref{critical}  is not restrictive when
$0<h<R_0$ and the singular set is empty.   

Finally, we recall a version of the Korn's inequality which will be used throughout the paper, see
e.g.\ \cite{korn-horgan}.
\begin{theorem}[Korn's inequality]
\label{kornann}
Let $\Omega \subset \R^2$ be a bounded domain with smooth boundary and $v  \in W^{1,2}(\Omega; \R^2)$. There exists a
constant $C = C(\Omega)$ such that if 
\[
\int_{\Omega} Dv \, dz =  \int_{\Omega} Dv^T \, dz,
\]
then 
\[
\int_{\Omega} |Dv|^2 \, dz \leq C \int_{\Omega} |E(v)|^2 \, dz.
\]

Moreover if $\Omega$ is an annulus $A(R,r)$, $R>r$, the constant $C$ depends only on the ratio $r/R$ and $C \to
4$
as $r/R \to 0$.
\end{theorem}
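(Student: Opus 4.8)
The plan is to reduce the inequality to a single scalar estimate on the infinitesimal rotation of $v$. First I would write $Dv = E(v) + W(v)$, where $W(v) = \tfrac12(Dv - Dv^T)$ is the skew--symmetric part. In dimension two $W(v)$ is governed by the single scalar $\omega := \tfrac12(\partial_1 v^2 - \partial_2 v^1)$, so that $\abs{Dv}^2 = \abs{E(v)}^2 + \abs{W(v)}^2 = \abs{E(v)}^2 + 2\omega^2$ pointwise, and the hypothesis $\int_\Omega Dv = \int_\Omega Dv^T$ says exactly that $\int_\Omega W(v) = 0$, i.e.\ $\int_\Omega \omega\,dz = 0$. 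Thus the claimed inequality is equivalent to an estimate of the form $\int_\Omega \omega^2\,dz \le C'\int_\Omega \abs{E(v)}^2\,dz$ for functions $\omega$ of zero average, with $C = 1 + 2C'$.

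For the qualitative statement I would exploit the distributional identity
\[
\partial_1\omega = \partial_1 E_{12}(v) - \partial_2 E_{11}(v), \qquad \partial_2\omega = \partial_1 E_{22}(v) - \partial_2 E_{12}(v),
\]
which shows that $\nabla\omega \in H^{-1}(\Omega;\R^2)$ with $\norm{\nabla\omega}_{H^{-1}(\Omega)} \le \sqrt2\,\norm{E(v)}_{L^2(\Omega)}$. Since $\Omega$ is bounded with smooth boundary, the Ne\v{c}as inequality (equivalently, the existence of a bounded right inverse of the divergence on $\Omega$) provides $c = c(\Omega)$ with $\norm{\omega - \bar\omega}_{L^2(\Omega)} \le c\,\norm{\nabla\omega}_{H^{-1}(\Omega)}$, where $\bar\omega$ is the mean of $\omega$. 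As $\bar\omega = 0$ by hypothesis, combining the two bounds gives $\norm{\omega}_{L^2(\Omega)} \le \sqrt2\,c(\Omega)\norm{E(v)}_{L^2(\Omega)}$ and hence $\int_\Omega \abs{Dv}^2 \le (1 + 4c(\Omega)^2)\int_\Omega\abs{E(v)}^2$, which is the first assertion. (Alternatively one can argue by contradiction, combining Korn's second inequality with Rellich compactness and the observation that the only infinitesimal rigid displacements $v$ with $\int_\Omega(Dv - Dv^T) = 0$ are the constants, for which $Dv = 0$.)

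For the quantitative statement on an annulus $A = A(R,r)$ the abstract constant $c(A)$ is too crude, and I would instead compute by separation of variables. Writing $v$ in polar coordinates through its radial and tangential components and expanding each in a Fourier series in the angular variable, with radial profiles $a_n,b_n$ on $(r,R)$, both $\int_A\abs{E(v)}^2$ and $\int_A\abs{Dv}^2$ decompose into mutually orthogonal one--dimensional contributions indexed by $n\in\mathbb Z$, say $\mathcal E_n[a_n,b_n]$ and $\mathcal E_n[a_n,b_n] + \mathcal R_n[a_n,b_n]$, where $\mathcal R_n$ collects the contribution of $2\omega^2$. Hence Korn's inequality with constant $C$ on $A$ is equivalent to $\mathcal R_n \le (C-1)\mathcal E_n$ for every $n$, the mode $n = 0$ being subject to the extra constraint $[\rho\,b_0(\rho)]_r^R = 0$, the radial form of $\int_A\omega = 0$, which is precisely what rules out the infinitesimal rotation (the only nontrivial zero--strain field carried by that mode). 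For each $n$ the optimal ratio $\lambda_n = \sup \mathcal R_n/\mathcal E_n$ is the reciprocal of the lowest eigenvalue of the corresponding Euler--Lagrange system, and the change of variables $\rho = e^t$ turns that system into a constant--coefficient one on an interval of length $\log(R/r)$, which can be integrated explicitly. One then checks that $\sup_n\lambda_n$ is finite, depends only on the ratio $r/R$, is attained on finitely many low modes, and satisfies $\sup_n\lambda_n \to 3$ as $r/R \to 0$ (the high modes contributing only the limiting ratio $1$, in accordance with the Korn constant $2$ of the whole plane); consequently $C = 1 + \sup_n\lambda_n$ depends only on $r/R$ and $C \to 4$.

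The reduction in the first two paragraphs is routine. The main obstacle is the last step: controlling the one--parameter family of mode--wise eigenvalue problems uniformly in $n$ and extracting the sharp limit, which is exactly where the value $4$, and the monotone dependence on $r/R$, come from. A convenient shortcut there is the Horgan--Payne--type identity relating the Korn constant of $A$ to the Babu\v{s}ka--Aziz constant of the divergence operator on $A$, whose asymptotics as $r/R \to 0$ are known; but in any case the heart of the matter is this explicitly solvable family of ODE systems.
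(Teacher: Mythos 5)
The paper does not prove this statement at all: Theorem \ref{kornann} is quoted as a known result with a reference to \cite{korn-horgan}, so there is no internal proof to compare against. Judged on its own, the first half of your argument is correct and standard: the pointwise orthogonal splitting $\abs{Dv}^2=\abs{E(v)}^2+2\omega^2$, the identification of the hypothesis $\int_\Omega Dv=\int_\Omega Dv^T$ with $\int_\Omega\omega=0$, the identities expressing $\nabla\omega$ in terms of first derivatives of $E(v)$, and the Ne\v{c}as/Lions inequality for mean-zero $\omega$ together give the qualitative inequality with $C=C(\Omega)$. The parenthetical compactness alternative is also fine.

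The quantitative assertion for the annulus, however, is the only part of the theorem the paper actually uses in a quantitative way (in the proof of Theorem \ref{RoundIsMin}, where $C_k\to4$ is invoked to get the explicit threshold), and here your argument is a program rather than a proof. You correctly set up the angular Fourier decomposition, the mode-wise ratios $\lambda_n=\sup\mathcal{R}_n/\mathcal{E}_n$, and the role of the constraint $[\rho\,b_0(\rho)]_r^R=0$ in excluding the infinitesimal rotation from the $n=0$ mode; but the decisive claims --- that each constant-coefficient system in the variable $t=\log\rho$ can be integrated to give $\lambda_n$ explicitly, that $\sup_n\lambda_n$ is finite and attained on finitely many low modes uniformly as $r/R\to0$, and that the limit is exactly $3$ so that $C=1+\sup_n\lambda_n\to4$ --- are asserted, not verified. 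You also need to say how the $n=\pm1$ modes are handled, since the constant displacements make both $\mathcal{E}_{\pm1}$ and $\mathcal{R}_{\pm1}$ degenerate and the ratio must be taken on the quotient. You flag this computation yourself as ``the heart of the matter,'' and that is exactly where the value $4$ comes from; without it the proof attempt establishes only the first sentence of the theorem. The strategy does match the Horgan--Payne literature that the paper cites, so the gap is one of execution rather than of idea, but as written the quantitative statement remains unproved.
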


\section{Calculation of the second variation}\label{sec:2nd}

The goal of this section is to calculate the second variation of the functional $\F$ 
at any point $(h,u) \in X_{\reg}(u_0)$, where $u$ is the elastic equilibrium associated to $h$ and 
$0<  h < R_0$, see formula \eqref{2ndFormula}. We then define a quadratic form
for a critical pair \eqref{biliear} and give a definition of positiveness of the second variation, 
see Definition \ref{pos.variat}.

To this aim we will introduce the following notation. Given a $2\pi$-periodic function $f:\R\to \R$ we will
denote by $\vect{f}:\R^{2}\setminus\{0\}\to \R^{2}$ the map
\begin{equation}
\label{not.vect}
\vect{f}(z):= f\left( \sigma^{-1}\left( \frac{z}{|z|}\right) \right)  \frac{z}{|z|}.
\end{equation}

For a parameter $s\in(-\eps,\eps)$ let $(h_s, u_s)  \in X_{\reg}(u_0)$ be a smooth perturbation of $(h,u)$, where $u_s$
is the elastic equilibrium
associated to $h_s$. By smooth perturbation we mean that the function $(s,\theta) \mapsto h_s(\theta)$ is
smooth and $\lim_{s \to 0}||h_s - h||_{C^2(\R)} = 0  $. Moreover we set $\dot{h}_s =
\frac{\partial}{\partial s}h_s$, $\dot{u}_s =  \frac{\partial}{\partial s} u_s$ and
$h_s'=\frac{\partial}{\partial\theta} h_s$. Notations $\dot{u}, \dot{h}$ mean that we evaluate the time derivatives at
$s=0$. We explicitly
point out that $ \dot{h}$ and  $ \dot{u}$ depend on $h_s$.  Finally,
for a given $h$, we define the set of functions
\begin{equation}
\label{testfunctionset}
\A(\Omega_h) := \{ w : \Omega_h \to \R^2  \mid  (h,w) \in X(0) \}.
\end{equation}
Roughly this means that $w \in \A(\Omega_h)$ if $w = 0$ outside $B_{R_0}$.

We will first write the equation for $\dot{u}$. In the following we will denote by $\tau$ the tangent unit vector to
$\Gamma_{h}$ clockwise oriented and by $\nu$ the unit normal to $\Gamma_{h}$ pointing outward the set $F_{h}$.  

\begin{proposition}
\label{prop.udot}
Let $(h,u) \in X_{\reg}(u_0)$ be such that $u$ is the elastic equilibrium associated to $h$ and $0 < h <
R_0$. Suppose  $(h_s,u_s)$ is a smooth perturbation of $(h,u)$. Then the function $\dot{u}
\in \A(\Omega_h)$ satisfies
\begin{equation}
\label{udot}
\begin{split}
\int_{\Omega_h} \C E(\dot{u}):E(w) \, dz &= \int_{\Gamma_h} \langle \vect{\dot{h}},  \nu \rangle \,  \C E(u) : E(w)  \,
d \Ha^1 \\
&= - \int_{\Gamma_h} \diver_{\tau} \left( \langle \vect{\dot{h}},  \nu \rangle\,  \C E(u) \right) \cdot w  \, d \Ha^1 ,
\end{split}
\end{equation}
for all $w \in \A(\Omega_h)$.
\end{proposition}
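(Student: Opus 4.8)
The plan is to differentiate in $s$, at $s=0$, the weak form of the Euler--Lagrange equation that characterises the elastic equilibrium $u_s$, carefully accounting for the motion of the free boundary $\Gamma_{h_s}$ by a transport argument. Since $u_s$ minimises $\int_{\Omega_{h_s}}Q(E(v))\,dz$ among all $v$ with $v\equiv u_0$ outside $B_{R_0}$, it satisfies
\[
\int_{\Omega_{h_s}}\C E(u_s):E(\phi)\,dz=0\qquad\text{for every }\phi\in\A(\Omega_{h_s}),
\]
equivalently $\diver\C E(u_s)=0$ in $\Omega_{h_s}$ together with the natural condition $\C E(u_s)[\nu_s]=0$ on $\Gamma_{h_s}$; in particular $\C E(u)[\nu]=0$ on $\Gamma_h$, which will be used below.

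Before differentiating I would make sense of $\dot u$ and secure the regularity needed to pass derivatives under the integral. To this end I would fix a family of diffeomorphisms $\Psi_s\colon\bar\Omega_h\to\bar\Omega_{h_s}$, smooth in $(s,z)$, with $\Psi_0=\id$, $\Psi_s=\id$ near $\partial B_{R_0}$ and $\Psi_s(\Gamma_h)=\Gamma_{h_s}$, obtained by interpolating the radial rescaling $\rho\,\sigma(\theta)\mapsto \tfrac{h_s(\theta)}{h(\theta)}\rho\,\sigma(\theta)$ near $\Gamma_h$ (recall $h>0$) with the identity near $\partial B_{R_0}$. Pulling the weak equation back to the fixed domain $\Omega_h$ via $\Psi_s$, the functions $w_s:=u_s\circ\Psi_s$ solve an $s$-dependent linear elliptic system on $\Omega_h$ with smooth coefficients; by the implicit function theorem (or by a direct difference-quotient estimate based on Korn's inequality, Theorem \ref{kornann}) the map $s\mapsto w_s$ is smooth into $C^\infty(\bar\Omega_h;\R^2)$. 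Hence $\dot u:=\dot w-Du\cdot X$, with $X:=\partial_s\Psi_s|_{s=0}$ (which equals $\vect{\dot h}$ on $\Gamma_h$), is a well-defined element of $C^\infty(\bar\Omega_h;\R^2)$ agreeing with $\partial_s u_s|_{s=0}$ at interior points; since $u_s\equiv u_0$ outside $B_{R_0}$ for all $s$, we get $\dot u\equiv 0$ there, so $\dot u\in\A(\Omega_h)$.

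With this in hand, fix $w\in\A(\Omega_h)$ — by approximation it suffices to treat smooth $w$ — and extend it to $\tilde w\in C^\infty(\R^2;\R^2)$ vanishing near $\partial B_{R_0}$, so that $\tilde w\in\A(\Omega_{h_s})$ for $|s|$ small. Plugging $\phi=\tilde w$ into the weak equation gives $s\mapsto\int_{\Omega_{h_s}}\C E(u_s):E(\tilde w)\,dz\equiv0$; differentiating at $s=0$ (legitimate after the change of variables $\Psi_s$ above, or directly by the transport theorem) and using that the outward normal velocity of $\partial\Omega_{h_s}$ is $0$ on $\partial B_{R_0}$ and equals $-\langle\vect{\dot h},\nu\rangle$ on $\Gamma_h$ (since $\vect{\dot h}$ is the velocity of the curve $\Gamma_{h_s}$, whose outward normal, as part of $\partial\Omega_{h_s}$, is $-\nu$), one obtains
\[
\int_{\Omega_h}\C E(\dot u):E(w)\,dz-\int_{\Gamma_h}\langle\vect{\dot h},\nu\rangle\,\C E(u):E(w)\,d\Ha^1=0,
\]
which is the first equality in \eqref{udot}.

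For the second equality I would decompose $Dw=(\partial_\tau w)\otimes\tau+(\partial_\nu w)\otimes\nu$ along $\Gamma_h$; since $\C E(u)$ is symmetric and $\C E(u)[\nu]=0$ on $\Gamma_h$, this yields $\C E(u):E(w)=\C E(u):Dw=(\C E(u)[\tau])\cdot\partial_\tau w$ on $\Gamma_h$. As $\Gamma_h$ is a smooth closed curve, integrating by parts along it produces no boundary term, and using $\C E(u)[\nu]=0$ once more to discard the curvature contribution one arrives at $\int_{\Gamma_h}\langle\vect{\dot h},\nu\rangle(\C E(u)[\tau])\cdot\partial_\tau w\,d\Ha^1=-\int_{\Gamma_h}\diver_\tau\big(\langle\vect{\dot h},\nu\rangle\,\C E(u)\big)\cdot w\,d\Ha^1$, establishing the claim. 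The only genuinely delicate point is the second paragraph: one must check, after freezing the domain, that $u_s$ depends smoothly on $s$ up to the moving free boundary, so that $\dot u$ is a bona fide element of $\A(\Omega_h)$ and both the differentiation under the integral sign and the transport formula are justified — a standard but careful application of linear elliptic theory and Korn's inequality, the care being needed because $\dot u$ has no naive pointwise meaning near $\Gamma_h$.
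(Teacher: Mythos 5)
Your proposal is correct and follows essentially the same route as the paper: differentiate the weak equilibrium identity $\int_{\Omega_{h_s}}\C E(u_s):E(\tilde w)\,dz=0$ in $s$ at $s=0$, pick up the boundary term $\int_{\Gamma_h}\langle\vect{\dot h},\nu\rangle\,\C E(u):E(w)\,d\Ha^1$, and then integrate by parts tangentially along $\Gamma_h$, using $\C E(u)[\nu]=0$ to kill the curvature and normal-derivative contributions, with a standard approximation for general $w\in\A(\Omega_h)$. The only differences are presentational: you invoke the transport theorem and justify smooth $s$-dependence of $u_s$ by pulling back to the fixed domain, whereas the paper computes the moving-boundary term explicitly in polar coordinates and relies on the diffeomorphisms $\Phi_s$ of \cite{cagn-mora-morini} for the same purpose.
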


\begin{proof}
The proof is very similar to the one in \cite{Fusco:2009ug}.
Arguing as in \cite{cagn-mora-morini}*{Proposition
8.1} we obtain a one parameter family of 
$C^{\infty}$-diffeomorphisms  $\Phi_s( \cdot):\R^2 \setminus \{0\}\to\R^2\setminus \{0\} $ such that 
 $\Phi_0=\id$ and $\Phi_s(z)=\vect{h_s} $ on $\partial F_h$.

Suppose first that $w \in \A(\Omega_h) \cap C^{\infty}(\bar{\Omega}_h)$. We may extend $w$ outside $\Omega_h$ such that
$w \in \A(\Omega_{h_s}) \cap C^{\infty}(\bar{\Omega}_{h_s})$. Since $u_s$ is the elastic equilibrium in $\Omega_{h_s}$
we have
\[
\int_{\Omega_{h_s}} \C E(u_s) : E(w) \, dz= 0 .
\]
Differentiate this with respect to $s$ and evaluate at $s=0$ to obtain 
\[
\int_{\Omega_h} \C E(\dot{u}) : E(w) \, dz - \int_0^{2 \pi}  \dot{h}\, h\, [\C E(u) : E(w)](h\,\sigma(\theta)) \,
d\theta  = 0 .
\]
Using the area formula and notation \eqref{not.vect} we may write 
\[
\int_{\Omega_h} \C E(\dot{u}) : E(w) \, dz = \int_{\Gamma_h}  \langle \vect{\dot{h}},  \nu \rangle \, \C E(u) : E(w)  \,
d \Ha^1,
\]
where we have used the fact that the normal can be written in polar coordinates as $\nu = \frac{h \sigma + h'
\sigma^{\perp}}{\sqrt{h^2 +
h'^2}}$. The rest will follow by integration by parts and from the fact that $\C E(u)[\nu]=0$
on $\Gamma_h$ as in \eqref{euler}. 

To obtain \eqref{udot} for every $w \in \A(\Omega_h) $ one may use a standard approximation argument.
\end{proof}

\begin{remark}
\label{propertest}
Notice that the equality (\ref{udot}) clearly holds also for test functions of the form $\tilde{w}(z)= w(z) + Az + b$,
where $w \in \A(\Omega_h)$, $b \in \R^2$ and $A$ is an antisymmetric matrix. 
\end{remark}


In the next theorem we derive 
the formula for the second variation of $\F$. 
\begin{theorem}\label{2d-var:formula}
Suppose that $(h,u)$ and $(h_s,u_s)$ are as in Proposition \ref{prop.udot}. Let $\nu$ be the outer normal of
$\Gamma_h
= \partial F_h$,  $\tau$ be the tangent (positively oriented) of $\Gamma_h$ and $k$ be the  curvature of $\Gamma_h$. 
The second
variation of $\F$ at $(h,u)$ is
\begin{equation}
\label{2ndFormula}
\begin{split}
\frac{d^2}{ds^2} \F(h_s,u_s) \big|_{s=0}  =  &- \int_{\Omega_{h}} 2  Q(E(\dot{u}))  \,  dz  + \int_{\Gamma_h} | 
\partial_{\tau} \langle\vect{\dot{h}}, \nu \rangle |^2 \,  d \Ha^1  \\
&- \int_{\Gamma_h}  ( \partial_{\nu} Q(E(u)) + k^2) \, \langle \vect{\dot{h}}, \nu \rangle^2 \,  d \Ha^1   \\ 
& +  \int_{\Gamma_h}  (  Q(E(u)) - k) \, \partial_{\tau} \left( \langle  \vect{\dot{h}} , \nu \rangle \langle 
\vect{\dot{h}} , \tau \rangle \right)  \,  d \Ha^1   \\ 
&- \int_{\Gamma_h} ( Q(E(u)) - k) \, \left( \frac{\langle  \vect{\dot{h}} , \nu \rangle^2}{\langle  \vect{h} , \nu
\rangle} +  \,\langle  \vect{\ddot{h}} , \nu \rangle \right)\, d \Ha^1.
\end{split}
\end{equation}
\end{theorem}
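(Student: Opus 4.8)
The plan is to compute $\frac{d^2}{ds^2}\F(h_s,u_s)|_{s=0}$ by differentiating twice in $s$ each of the three terms in the relaxed functional, using that along the smooth perturbation $h_s > 0$ and smooth so the crack term $\Ha^1(\Sigma_{h_s})$ vanishes and $\Ha^1(\Gamma_{h_s})$ is the length of a smooth curve. \textbf{Step 1: set up the polar parametrisation.} I would write $\Gamma_{h_s}$ as the image of $\theta \mapsto h_s(\theta)\sigma(\theta)$, so that $\Ha^1(\Gamma_{h_s}) = \int_0^{2\pi}\sqrt{h_s^2 + (h_s')^2}\,d\theta$ and $|\Omega_{h_s}|$-integrals become integrals over $B_{R_0}$ minus the region $\{\rho \le h_s(\theta)\}$. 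The key dictionary is that $\langle\vect{\dot h},\nu\rangle$, $\langle\vect{\dot h},\tau\rangle$, the curvature $k$, the arclength element and $\partial_\tau$ all have explicit expressions in terms of $h, h', h''$ and $\dot h, \dot h'$; in particular $\nu = \frac{h\sigma + h'\sigma^\perp}{\sqrt{h^2+h'^2}}$ as already recorded in the proof of Proposition~\ref{prop.udot}, and $\langle\vect{h},\nu\rangle = \frac{h^2}{\sqrt{h^2+h'^2}}$, which explains the denominator appearing in the last line of \eqref{2ndFormula}.

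\textbf{Step 2: differentiate the bulk term.} Write $\mathcal E(s) := \int_{\Omega_{h_s}} Q(E(u_s))\,dz$. Since $u_s$ is the elastic equilibrium, $\frac{d}{ds}\mathcal E(s) = \int_{\Gamma_{h_s}}\langle\vect{\dot h_s},\nu_s\rangle Q(E(u_s))\,d\Ha^1$ (the interior term $\int \C E(u_s):E(\dot u_s)$ vanishes because $w = \dot u_s$ is admissible and $\C E(u_s)[\nu_s] = 0$; one must be mildly careful that $\dot u_s$ need not vanish outside $B_{R_0}$, but Remark~\ref{propertest} handles the affine correction). Differentiating once more at $s=0$: one term comes from differentiating the domain/measure, giving $\int_{\Gamma_h}\partial_\nu(\ldots)\langle\vect{\dot h},\nu\rangle^2 + (\text{curvature})\langle\vect{\dot h},\nu\rangle^2 + (\text{terms with }\ddot h, \partial_\tau)$ by a standard first/second variation of a boundary integral; the other term comes from differentiating $Q(E(u_s))$, which produces $\int_{\Gamma_h}\langle\vect{\dot h},\nu\rangle\,\C E(u):E(\dot u)\,d\Ha^1$. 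This last quantity I would rewrite using Proposition~\ref{prop.udot} with the test function $w = \dot u$ itself, turning it into $\int_{\Omega_h}\C E(\dot u):E(\dot u)\,dz = \int_{\Omega_h}2Q(E(\dot u))\,dz$ — this is the origin of the first term in \eqref{2ndFormula}, with the sign flip coming from the fact that it enters $\mathcal E''$ with the opposite orientation after moving it across.

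\textbf{Step 3: differentiate the perimeter term.} For $L(s) := \int_0^{2\pi}\sqrt{h_s^2 + (h_s')^2}\,d\theta$ I compute $L''(0)$ directly, then re-express the result intrinsically. The first derivative is $\int_0^{2\pi}\frac{h\dot h + h'\dot h'}{\sqrt{h^2+h'^2}}\,d\theta = \int_{\Gamma_h}k\langle\vect{\dot h},\nu\rangle\,d\Ha^1$, the familiar "first variation of length = curvature". The second derivative, after an integration by parts in $\theta$, yields exactly the "Jacobi-type" quadratic form $\int_{\Gamma_h}|\partial_\tau\langle\vect{\dot h},\nu\rangle|^2 - k^2\langle\vect{\dot h},\nu\rangle^2\,d\Ha^1$ plus the curvature-times-$\ddot h$ and curvature-times-$\partial_\tau(\ldots)$ contributions; matching these against the displayed formula is a bookkeeping exercise in converting $\partial_\theta$ to $\partial_\tau$ and collecting the $\langle\vect{\dot h},\tau\rangle$ pieces. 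Adding $\mathcal E''(0) + L''(0)$ and using the Euler–Lagrange relation $Q(E(u)) - k = \text{const}$ on $\Gamma_h$ to merge the "const." coefficient into the geometric terms gives \eqref{2ndFormula}.

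\textbf{Main obstacle.} The genuinely delicate point is Step 3 combined with the interplay of the normal and tangential components of $\vect{\dot h}$: because the perturbation is \emph{radial} (it moves points along rays, not along the normal), $\vect{\dot h}$ is not a normal vector field, so the clean "normal graph" second-variation-of-area formula does not apply verbatim. One must carefully track how the tangential component $\langle\vect{\dot h},\tau\rangle$ enters — it is precisely what generates the third line of \eqref{2ndFormula} and, together with the non-constancy of $\langle\vect{h},\nu\rangle$ along $\Gamma_h$, the $\frac{\langle\vect{\dot h},\nu\rangle^2}{\langle\vect{h},\nu\rangle}$ term in the last line. A secondary technical nuisance is justifying the differentiation under the integral sign and the approximation argument extending Proposition~\ref{prop.udot} to $w = \dot u$ (which lies in $\A(\Omega_h)$ but is only $H^1$ a priori); this is where the smoothness hypothesis $(h_s,u_s)\in X_{\reg}(u_0)$ and elliptic regularity are used. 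I expect everything else to reduce to careful but routine computation.
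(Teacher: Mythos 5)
Your plan is essentially the paper's own proof: split $\F$ into bulk and perimeter parts, compute the first variations in polar coordinates, differentiate again, and convert the term $\int_{\Gamma_h}\langle\vect{\dot h},\nu\rangle\,\C E(u):E(\dot u)\,d\Ha^1$ into $\int_{\Omega_h}2Q(E(\dot u))\,dz$ via Proposition \ref{prop.udot} with $w=\dot u$, while tracking the tangential component of the radial perturbation exactly as you describe. The only point to watch is the sign of the first variation of the bulk term (it is $-\int_{\Gamma_{h_s}}\langle\vect{\dot h_s},\nu_s\rangle Q(E(u_s))\,d\Ha^1$, since $\nu$ points into $\Omega_{h_s}$), and the "bookkeeping" you defer in the perimeter part hides the two genuinely geometric identities the paper isolates, namely $\partial_\nu k=-k^2$ and $\langle\dot\nu,\tau\rangle=-\partial_\tau\langle\vect{\dot h},\nu\rangle$.
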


\begin{proof}
We will treat the elastic and the perimeter part separately and write 
\[  
\F(h_s,u_s) = \int_{\Omega_{h_s}} Q(E(u_s)) \, dz +  {\Ha}^1(\Gamma_{h_s}) = \F_1(h_s,u_s) + \F_2(h_s).
\]
Since $h_s$ is smooth, we notice that $\Sigma_{h_s} = \emptyset$ and
denote by $\Phi_s $ the family of diffeomorphisms as in the proof of Proposition \ref{prop.udot}.

\noindent\textbf{1st Variation }: We start by differentiating the perimeter part $ \F_2(h_s)$.

Since ${\Ha}^1(\Gamma_{h_s}) =
\int_0^{2 \pi} \sqrt{h_s^2 + h_s'^2} \, d \theta$ we have 
\[
\frac{d}{ds}\F_2(h_s) =  \int_0^{2 \pi} \,  \frac{h_s \, \dot{h}_s + h_s' \, \dot{h}_s'}{\sqrt{h_s^2 + h_s'^2}} \, d
\theta .
\]
Integrate the second term by parts and obtain
\[
\begin{split}
\int_0^{2 \pi}  \frac{ h_s' \, \dot{h}_s'}{\sqrt{h_s^2 + h_s'^2}} \, d \theta = - \int_0^{2 \pi} \, \left(
\frac{h_s''}{\sqrt{h_s^2 + h_s'^2}} - \frac{ h_s(h_s')^2 + (h_s')^2 h_s''}{(h_s^2 +h_s'^2)^{3/2}} \right) \, \dot{h}_s
\,  d \theta .
\end{split}
\]
Then we have
\begin{equation}
\label{1stVarPer}
\begin{split}
\frac{d}{ds}\F_2(h_s) &=  \int_0^{2 \pi} h_s \, \dot{h}_s \left( \frac{ h_s^2 + 2 h_s'^2 - h_s h_s''}{(h_s^2 +
h_s'^2)^{3/2}} \right)   \, d \theta  = \int_0^{2 \pi} \,  h_s \, \dot{h}_s  \, k_s( h_s \sigma) \, d \theta, \\
&=\int_{\Gamma_{h_s}} \langle \vect{\dot{h}_s}, \nu_{h_s} \rangle \, k_s \, d \Ha^1.
\end{split}
\end{equation}
where $k_s =   \frac{ h_s^2 + 2 h_s'^2 - h_s h_s''}{(h_s^2 + h_s'^2)^{3/2}}$ is the curvature of $\Gamma_{h_s}$ in
polar coordinates.
 
Let us now treat the elastic part $ \F_1(h_s, u_s) $. Differentiate it with respect to $s$ and get, as in the proof of
Proposition \ref{prop.udot}, 
\[
\frac{d}{ds} \F_1(h_s, u_s) = \int_{\Omega_{h_s}}  \C E(\dot{u}_s) : E(u_s) \, dz - \int_0^{2 \pi}  \dot{h}_s \, h_s \,
Q(E(u_s))(h_s\sigma)  \, d \theta.
\]
The first term disappears since $u_s$ satisfies the Euler-Lagrange equations \eqref{euler} and $\dot{u}_s \in
\A(\Omega_{h_s})$. 
Hence, we are only left with
\begin{equation}
\label{1stVarEla}
\frac{d}{ds} \F_1(h_s, u_s) = - \int_0^{2 \pi}\dot{h}_s\, h_s\, Q(E(u_s))(h_s\sigma)  \, d \theta = -
\int_{\Gamma_{h_s}} \langle \vect{\dot{h}_s}, \nu_{h_s} \rangle \,  Q(E(u_s))  \, d \Ha^1.
\end{equation}
Combining \eqref{1stVarPer} and \eqref{1stVarEla} gives the first variation of $\F$.

\noindent\textbf{2nd Variation }: We will divide the
proof in
two steps.

\noindent\textit{{Step 1}}: As in \cite{Fusco:2009ug}, we begin by making a couple of general observations. 

Let $d$ be the signed distance function from $\Gamma_{h}$, i.e., 
\[d(z):=\begin{cases}
           - \dist(z,\Gamma_{h}) &\text{\ if\ }z \in F_{h},\\
           \dist(z,\Gamma_{h}) &\text{\ if\ }z \not\in F_{h}.
          \end{cases}\]
Since the boundary $\Gamma_h$ is a graph of a $C^ {\infty}$-function, $d$ is $C^1$ function in a
small tubular neighbourhood of $\Gamma_{h_t}$. Setting $\nu(z):=\nabla d(z)$ and $k(z):=(\diver
\nu)(z)$, we observe that on $\Gamma_{h}$,  $\nu$ is the outer normal to $\Gamma_{h}$ and $k$ is the curvature of
$\Gamma_{h}$. 

First we claim that 
\begin{equation}
\label{nu_curvature}
\partial_{\nu} k= -k^2, \quad \text{\ on\ } \Gamma_{h}.
\end{equation}

Differentiating the identity $\abs{\nu}=1$ with respect to $\nu$ yields $D\nu[\nu]=0$. This shows that
\begin{equation}
\label{tau_tensor_tau}
 D\nu=D_\tau\nu=k\tau\otimes\tau\quad\text{\ and\ }\quad \diver\nu=\diver_\tau\nu,\quad\text{\ on\ } \Gamma_h.
\end{equation}
Differentiating the identity $D\nu[\nu]=0$ yields
$\sum_{j=1}^2(\partial^2_{jk}\nu_i\nu_j+\partial_j\nu_i\partial_k\nu_j)=0$ for $k,i=1,2$. Hence we have
\[
\left(\partial_\nu(D\nu)\right)_{ik}=\sum_{j=1}^2
\partial^2_{jk}\nu_i\nu_j=-\sum_{j=1}^2\partial_j\nu_i\partial_k\nu_j=-\left((D\nu)^2\right)_{ik}
\]
for $i,k=1,2$. Using the previous identity we obtain
\[
\partial_\nu k=  \text{Trace} \left( \partial_{\nu}(D\nu)\right) =   -\text{Trace} \left( (D\nu)^2\right)=  -  k^2
\qquad \text{on} \, \, \Gamma_h, 
\]
where the last equality follows from \eqref{tau_tensor_tau}. Hence we have \eqref{nu_curvature}.

Next we claim that 
\begin{equation}
\label{nu_dot_tau}
\langle \dot{\nu} , \tau \rangle =- \partial_{\tau} \langle \vect{\dot{h}} , \nu \rangle,  \quad \text{\ on\ }
\Gamma_{h}.
\end{equation}
Recall that $\Phi_s( z):\R^2 \setminus \{0\}\to\R^2\setminus \{0\} $ is a one-parameter family of 
$C^{\infty}$-diffeomorphisms such that $\Phi_s(z)=\vect{h_s} $ on $\Gamma_h$ and $\Phi_0=\id$. Notice that we have
\begin{equation}
\label{nu_dot_tau2}
\langle \dot{\Phi} , \nu \rangle=  \langle \vect{\dot{h}} , \nu \rangle ,\quad \text{\ on\ } \Gamma_{h}.
\end{equation}
Differentiating $D\Phi^{-T}_s D\Phi^{T}_s[\nu]=\nu$ and calculating at $s = 0$ gives
$D\dot{\Phi}^{-T}[\nu]=-D\dot{\Phi}^T[\nu]$. Differentiate
the identity 
\[
\nu_s \circ\Phi_s=\frac{D\Phi^{-T}_s[\nu]}{\abs{D\Phi^{-T}_s[\nu]}}
\]
 with respect to $s$, evaluate at $s=0$ and use the previous identity to obtain
\begin{equation}
\label{nu_dot_tau-1}
 \dot{\nu}+D\nu[\dot{\Phi}]=-D\dot{\Phi}^T[\nu]+\langle D\dot{\Phi}^T[\nu] , \nu \rangle \,\nu \,,\quad \text{\ on\ }
\Gamma_h.
\end{equation}
By \eqref{tau_tensor_tau} we have $D\nu=D_\tau \nu^T$ on $\Gamma_h$. Therefore, multiplying \eqref{nu_dot_tau-1} 
 by $\tau$ we obtain
\[
\begin{split}
\langle \dot{\nu},\tau \rangle & =  - \langle D \dot{\Phi}^T [\nu],\tau \rangle - \langle D\nu [\dot{\Phi}],\tau\rangle
\\
&=
-\langle D \dot{\Phi}^T [\nu],\tau \rangle - \langle D\nu^T [\dot{\Phi}],\tau \rangle\\
 &= \langle \, (-D\, \langle \dot{\Phi} , \nu \rangle) \,, \tau \rangle=-\partial_{\tau}  \langle \vect{\dot{h}} , \nu
\rangle \quad\text{\ on\ } \Gamma_h
\end{split}
\] 
and \eqref{nu_dot_tau} is proven.

\noindent\textit{{Step 2}}: Let us  start with the perimeter part and differentiate \eqref{1stVarPer}
\[
\begin{split}
\frac{d^2}{ds^2}\F_2(h_s) \big|_{s=0} &=   \overbrace{\int_0^{2 \pi} h\, \dot{h}\, \dot{ k}( h\sigma) \, d \theta}^{A} +
\overbrace{\int_0^{2 \pi}   h \, \dot{h}^2  \,  \partial_{\sigma} k( h \sigma) \, d \theta}^{B} \\
&\quad+ \int_0^{2 \pi} \dot{h}^2 \, k( h\sigma) \, d \theta +  \int_0^{2 \pi} h\, \ddot{h}\, k( h\sigma) \, d \theta.
\end{split}
\]
For the term $A$ we have that 
\[
\begin{split}
A & = \int_0^{2 \pi} h\, \dot{h}\, \dot{ k}( h\sigma) \, d \theta = \int_{\Gamma_h}  \langle \vect{\dot{h}} , \nu
\rangle  \,\dot{ k} \,  d \Ha^1  = \int_{\Gamma_{h}}  \langle \vect{\dot{h}} , \nu \rangle  \, \diver_{\tau} \dot{\nu}
\,  d \Ha^1  \\
&= - \int_{\Gamma_h} \langle \dot{\nu},  \tau \rangle \, \partial_{\tau}  \langle \vect{\dot{h}} , \nu \rangle  \,  d
\Ha^1 =  \int_{\Gamma_{h}} | \partial_{\tau}  \langle \vect{\dot{h}} , \nu \rangle |^2 \,  d \Ha^1,
\end{split}
\]
where we have used  \eqref{nu_dot_tau}. For the term $B$, noticing that
\[\partial_{\sigma}k = \frac{h}{\sqrt{h^2 + h'^2}} \, \partial_{\nu} k -   \frac{h'}{\sqrt{h^2 + h'^2}} \,
\partial_{\tau} k \quad \text{ and } \quad\tau = \frac{h
\sigma^{\perp} - h' \sigma}{\sqrt{h^2 + h'^2}}\,, \]
we may write 
\[
\begin{split}
B &= \int_0^{2 \pi}   h \, \dot{h}^2  \,  \partial_{\sigma} k( h \sigma) \, d \theta =  \int_{\Gamma_h}  \langle
\vect{\dot{h}} , \nu \rangle^2 \,  \partial_{\nu} k \,  d \Ha^1 + \int_{\Gamma_h}  \langle \vect{\dot{h}}, \nu \rangle 
\langle \vect{\dot{h}}, \tau \rangle  \,  \partial_{\tau} k \,  d \Ha^1 \\
&=  - \int_{\Gamma_h}  \langle \vect{\dot{h}} , \nu \rangle^2 \,    k^2 \,  d \Ha^1 - \int_{\Gamma_h}   k \,
\partial_{\tau}\left(   \langle \vect{\dot{h}}, \nu \rangle  \langle \vect{\dot{h}}, \tau \rangle  \right)\, d \Ha^1,
\end{split}
\]
where we have used \eqref{nu_curvature} and integration by parts. Hence, we have  
\begin{equation}
\label{2ndVarPer}
\begin{split}
\frac{d^2}{ds^2}\F_2(h_s)  \big|_{s=0} &=  \int_{\Gamma_h}  | \partial_{\tau}  \langle \vect{\dot{h}} , \nu \rangle |^2 
\,  d \Ha^1  - \int_{\Gamma_h} \,  \langle \vect{\dot{h}} , \nu \rangle^2 \,   k^2 \,  d \Ha^1 \\
&\quad - \int_{\Gamma_h}   k \, \partial_{\tau}\left(   \langle \vect{\dot{h}}, \nu \rangle  \langle \vect{\dot{h}},
\tau
\rangle  \right)\,  d \Ha^1 +\int_{\Gamma_h} k \, \frac{\langle  \vect{\dot{h}} , \nu \rangle^2}{\langle  \vect{h} , \nu
\rangle}\,  d \Ha^1 \, +   \int_{\Gamma_h} k \,  \langle \vect{\ddot{h}} , \nu \rangle \, d \Ha^1.
\end{split}
\end{equation}

We are left with the elastic part. Differentiate \eqref{1stVarEla} to obtain 
\[
\begin{split}
\frac{d^2}{ds^2} \F_1(h_s, u_s) \big|_{s=0} =  &- \int_0^{2 \pi} \C E(\dot{u}):E(u) \, h \, \dot{h} \, d \theta - 
\int_0^{2 \pi} \partial_{\sigma} Q (E(u)) \, h \, \dot{h}^2 \, d \theta \\
&-  \int_0^{2 \pi} Q(E(u)) \, ( \dot{h}^2 + h \ddot{h}) \, d \theta.
\end{split}
\]
Since $\dot{u} \in \A(\Omega_h)$, we may rewrite the first term using \eqref{udot} as follows
\[
\begin{split}
  \int_0^{2 \pi}  \C E(\dot{u}):E(u) \, h \, \dot{h}   \, d \theta &=   \int_{\Gamma_h} \langle  \vect{\dot{h}} , \nu
\rangle \, \C E(u): E(\dot{u})\,  d \Ha^1 \\
		&=   \int_{\Omega_h} 2  Q (E(\dot{u}))  \,  dz.
\end{split}
\]

For the second term, noticing that 
\[\partial_{\sigma} Q(E(u)) =
\frac{h}{\sqrt{h^2 + h'^2}} \, \partial_{\nu} Q(E(u)) -   \frac{h'}{\sqrt{h^2 + h'^2}} \, \partial_{\tau} Q(E(u))\] and
using integration by parts, we get 
\[
\begin{split}
  \int_0^{2 \pi} \partial_{\sigma} Q (E(u)) \, h \, \dot{h}^2 \, d \theta &= \int_{\Gamma_h}  \partial_{\nu} Q(E(u)) \, 
\langle  \vect{\dot{h}} , \nu \rangle^2 \,  d \Ha^1  + \int_{\Gamma_h}  \partial_{\tau} Q(E(u)) \, \left( \langle 
\vect{\dot{h}} , \nu \rangle \langle  \vect{\dot{h}} , \tau \rangle \right)  \,  d \Ha^1 \\
&= \int_{\Gamma_h}  \partial_{\nu} Q(E(u)) \,  \langle  \vect{\dot{h}} , \nu \rangle^2 \,  d \Ha^1  - \int_{\Gamma_h}
Q(E(u)) \,  \partial_{\tau} \left( \langle  \vect{\dot{h}} , \nu \rangle \langle  \vect{\dot{h}} , \tau \rangle \right) 
\,  d \Ha^1\,.
\end{split}
\]

Finally we have that
\begin{equation}
\label{2ndVarEla}
\begin{split}
\frac{d^2}{ds^2} \F_1(h_s, u_s) \big|_{s=0} = &- \int_{\Omega_h} 2  Q(E(\dot{u}))  \,  dz - \int_{\Gamma_h} 
\partial_{\nu} Q(E(u)) \,  \langle  \vect{\dot{h}} , \nu \rangle^2 \,  d \Ha^1   \\
&+ \int_{\Gamma_h}  Q(E(u)) \,\partial_{\tau} \left( \langle  \vect{\dot{h}} , \nu \rangle \langle  \vect{\dot{h}} ,
\tau \rangle \right)\,  d \Ha^1 - \int_{\Gamma_h} Q(E(u)) \, \frac{\langle  \vect{\dot{h}} , \nu \rangle^2}{\langle 
\vect{h} , \nu \rangle}  \, d \Ha^1\\
 &- \int_{\Gamma_h} Q(E(u)) \,\langle  \vect{\ddot{h}} , \nu \rangle   \, d \Ha^1.
\end{split}
\end{equation}
Combining \eqref{2ndVarEla} with \eqref{2ndVarPer} yields the formula \eqref{2ndFormula}.

\end{proof}

In the formula \eqref{2ndFormula} we considered any smooth perturbation $h_s$ of $h$. However, in order to be
admissible for our minimization problem, 
a perturbation $h_s$ has to satisfy the volume constraint
$|F_{h_s}| = |F_h|$, or equivalently  
\begin{equation}
\label{volumeconstrain}
\int_0^{2 \pi} h_s^2 \, d \theta = \int_0^{2 \pi} h^2 \, d \theta \qquad \text{for all }\,\, s >0.
\end{equation}

\begin{remark}
\label{term-vanish}
If $(h,u) \in X_{\reg}(u_0)$ is a critical pair and the perturbation $(h_s)$ satisfies the volume constraint
\eqref{volumeconstrain}, then the last two terms in \eqref{2ndFormula}
vanish. 
Indeed one term vanishes because the term  
$Q(E(u)) - k$ is constant on $\Gamma_{h}$ by  \eqref{euler}. The second one vanishes since differentiating two times the
volume constraint \eqref{volumeconstrain} with respect to $s$ we
obtain
\[ \int_{\Gamma_{h}}  \frac{\langle  \vect{\dot{h}} , \nu \rangle^2}{\langle  \vect{h}
, \nu \rangle} + \langle  \vect{\ddot{h}} , \nu \rangle  \, d \Ha^1 =0\,.\]
\end{remark}

Motivated by the previous observation,
for any $\psi \in
H_{\sharp}^1(\R)$ satisfying
\begin{equation}
\label{pertur}
\int_0^{2 \pi} h \, \psi \, d \theta = 0\,,
\end{equation}
we define the  quadratic form associated to a regular critical
pair $(h,u)$
\begin{equation}
\label{biliear}
\begin{split}
\partial^2 \F(h,u)[\psi] := &- \int_{\Omega_{h}} 2  Q(E(u_{\psi}))  \,  dz + \int_{\Gamma_h} | \partial_{\tau}  \langle 
\vect{\psi} , \nu \rangle |^2 \,  d \Ha^1 \\
&- \int_{\Gamma_h} ( \partial_{\nu} Q(E(u_\psi))  +   k^2 )\,  \langle  \vect{\psi} , \nu \rangle^2 \,  d \Ha^1\,,
\end{split}
\end{equation}
where $u_{\psi} \in \A(\Omega_h)$ is the unique solution to 
\begin{equation}
\label{u_psi}
\int_{\Omega_h} \C E(u_{\psi}):E(w) \, dz=  - \int_{\Gamma_h} \diver_{\tau} \left( \langle \vect{\psi},  \nu \rangle\, 
\C E(u) \right) \cdot w \, d \Ha^1\,,\qquad \forall w \in \A(\Omega_h)\,.
\end{equation}

We define now what we mean by the second variation of $\F$ being positive at a
critical pair.
\begin{definition}
\label{pos.variat}
Suppose that $(h,u) \in X_{\reg}(u_0)$ is a critical pair. The functional \eqref{functional-relaxed} has \emph{positive
second variation} at $(h,u)$ if
\[
\partial^2 \F(h,u)[\psi] > 0
\]   
for all $\psi \in H_{\sharp}^1(\R)$ such that $\psi \neq 0$ and satisfies \eqref{pertur}.
\end{definition}

We point out that if the second variation is positive at a critical point $(h,u)$, then the formula \eqref{2ndFormula}
and Remark \ref{term-vanish} imply that for every smooth perturbation $h_s$ of $h$ satisfying the volume
constraint $\frac{d^2}{ds^2} \F(h_s,u_s) \big|_{s=0}>0$.

At the end of the section we prove the following compactness result.
\begin{lemma}
\label{easylemma}
Suppose that a critical pair $(h,u) \in X_{\reg}(u_0)$ is a point of positive second variation and $0 < h
< R_ 0$.
Then there exists $c_0 >0$  such that
\[
\partial^2 \F (h,u)[ \psi] \geq c_0 || \langle \vect{\psi}, \nu \rangle ||_{H^1(\Gamma_h)}^2,
\]
for every $\psi \in H_{\sharp}^1(\R)$ satisfying \eqref{pertur}. 
\end{lemma}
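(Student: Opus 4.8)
The statement is a standard "positive definite quadratic form plus compact perturbation is coercive" argument, executed via a contradiction/normalization scheme. The plan is to argue by contradiction: suppose no such $c_0$ exists, so there is a sequence $\psi_j \in H^1_\sharp(\R)$ with $\int_0^{2\pi} h\psi_j\,d\theta = 0$, normalized so that $\|\langle\vect{\psi_j},\nu\rangle\|_{H^1(\Gamma_h)} = 1$, and $\partial^2\F(h,u)[\psi_j] \to \ell \le 0$. The first task is to show that the normalization plus the structure of $\partial^2\F$ forces $\{\psi_j\}$ to be bounded in $H^1_\sharp(\R)$. Here one uses that the map $\psi \mapsto \langle\vect{\psi},\nu\rangle$ on $\Gamma_h$, written in polar coordinates, is an invertible first-order relation between $\psi$ and $\langle\vect\psi,\nu\rangle$ (since $\langle \vect\psi, \nu\rangle = h\psi/\sqrt{h^2+h'^2}$ and $h$ is smooth and bounded away from $0$), so the $H^1(\Gamma_h)$-norm of $\langle\vect{\psi_j},\nu\rangle$ controls the $H^1_\sharp(\R)$-norm of $\psi_j$ and conversely. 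So $\psi_j$ is bounded in $H^1_\sharp$, hence (up to a subsequence) $\psi_j \rightharpoonup \psi_0$ weakly in $H^1_\sharp$ and strongly in $L^2$ and in $C^0$, and $\langle\vect{\psi_j},\nu\rangle \rightharpoonup \langle\vect{\psi_0},\nu\rangle$ weakly in $H^1(\Gamma_h)$.

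Next I analyze the behaviour of each term of \eqref{biliear} along the sequence. The Dirichlet-type term $\int_{\Gamma_h}|\partial_\tau\langle\vect{\psi_j},\nu\rangle|^2\,d\Ha^1$ is weakly lower semicontinuous, so it passes to the limit with a $\le$. The zeroth-order term $\int_{\Gamma_h}(\partial_\nu Q(E(u)) + k^2)\langle\vect{\psi_j},\nu\rangle^2\,d\Ha^1$ converges by strong $L^2$ (indeed $C^0$) convergence of $\langle\vect{\psi_j},\nu\rangle$, since $\partial_\nu Q(E(u)) + k^2$ is a fixed continuous function on $\Gamma_h$. For the elastic term I must show $u_{\psi_j} \to u_{\psi_0}$ strongly in $H^1(\Omega_h;\R^2)$, so that $\int_{\Omega_h} Q(E(u_{\psi_j}))\,dz \to \int_{\Omega_h} Q(E(u_{\psi_0}))\,dz$. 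This follows from the defining equation \eqref{u_psi}: testing the equation for $u_{\psi_j} - u_{\psi_0}$ against itself (after extending by $Az+b$ as in Remark \ref{propertest} to make Korn applicable) gives $\int_{\Omega_h}\C E(u_{\psi_j}-u_{\psi_0}):E(u_{\psi_j}-u_{\psi_0})\,dz$ controlled by a boundary pairing of $\diver_\tau(\langle\vect{\psi_j-\psi_0},\nu\rangle\,\C E(u))$ against $u_{\psi_j}-u_{\psi_0}$; integrating by parts moves the tangential derivative onto $\C E(u)$ (smooth) and onto $u_{\psi_j}-u_{\psi_0}$, and since $\langle\vect{\psi_j},\nu\rangle \to \langle\vect{\psi_0},\nu\rangle$ weakly in $H^1(\Gamma_h)$ hence strongly in $L^2(\Gamma_h)$ together with a trace estimate, the right-hand side tends to $0$. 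Combining, $\partial^2\F(h,u)[\psi_0] \le \liminf_j \partial^2\F(h,u)[\psi_j] = \ell \le 0$.

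Now two cases. If $\psi_0 \ne 0$: since $\psi_0$ still satisfies \eqref{pertur} (the constraint is linear and passes to the $L^2$-limit), positivity of the second variation (Definition \ref{pos.variat}) gives $\partial^2\F(h,u)[\psi_0] > 0$, contradicting $\le 0$. If $\psi_0 = 0$: then $\langle\vect{\psi_j},\nu\rangle \to 0$ strongly in $L^2(\Gamma_h)$ and $u_{\psi_j} \to 0$ strongly in $H^1(\Omega_h)$, so the elastic term and the zeroth-order term both vanish in the limit, and we get $0 \ge \ell = \lim_j \partial^2\F(h,u)[\psi_j] \ge \limsup_j \int_{\Gamma_h}|\partial_\tau\langle\vect{\psi_j},\nu\rangle|^2\,d\Ha^1 \ge 0$; hence $\|\partial_\tau\langle\vect{\psi_j},\nu\rangle\|_{L^2}\to 0$. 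Together with $\|\langle\vect{\psi_j},\nu\rangle\|_{L^2}\to 0$ this forces $\|\langle\vect{\psi_j},\nu\rangle\|_{H^1(\Gamma_h)}\to 0$, contradicting the normalization $\|\langle\vect{\psi_j},\nu\rangle\|_{H^1(\Gamma_h)} = 1$. Either way we reach a contradiction, proving the existence of $c_0$. The main obstacle I anticipate is the elastic-term continuity: establishing strong $H^1$-convergence of the displacements $u_{\psi_j}$ requires carefully moving the tangential divergence in \eqref{u_psi} off of $\langle\vect{\psi_j},\nu\rangle$ by integration by parts on the closed curve $\Gamma_h$ and then invoking Korn's inequality (Theorem \ref{kornann}) on the annular-type domain $\Omega_h$ after correcting $u_{\psi_j}-u_{\psi_0}$ by a rigid motion so the symmetrized-gradient hypothesis holds; the rest is routine weak/strong convergence bookkeeping.
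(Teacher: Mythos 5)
Your argument is correct and follows essentially the same route as the paper's: a normalization/compactness argument, lower semicontinuity of the quadratic form (with the elastic term controlled via Korn's inequality on an enlarged domain after a rigid-motion correction), and the dichotomy $\psi_0 \neq 0$ versus $\psi_0 = 0$. The only loose point is your claim that strong $L^2(\Gamma_h)$-convergence of $\langle \vect{\psi_j},\nu\rangle$ ``together with a trace estimate'' kills the boundary pairing: since the test function only has an $H^{1/2}(\Gamma_h)$ trace, you need either the interpolation (strong $L^2$ plus bounded $H^1$ implies strong $H^{1/2}$) or, as the paper does, the compactness of the trace operator applied to $u_{\psi_j}$ itself so that the weakly convergent factor is paired against a strongly $L^2$-convergent one.
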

\begin{proof}
First we notice that the condition \eqref{pertur} can be written using the notation \eqref{not.vect} as
\begin{equation}
\label{orthogonalII}
\int_{\Gamma_h}   \langle  \vect{\psi} , \nu \rangle \, d \Ha^1 = 0.
\end{equation}
Using the Sobolev-Poincaré inequality $||  \langle  \vect{\psi} , \nu \rangle||_{L^2(\Gamma_h)} \leq C
||\partial_ {\tau}   \langle  \vect{\psi} , \nu \rangle||_{L^2(\Gamma_h)}$ and  \eqref{orthogonalII} we easily see that
it suffices to show that 
\[
c_0 := \inf \left\{\partial^2 \F (h,u)[ \psi] \mid \psi \in H_{\sharp}^1(\R) \text{ satisfying \eqref{pertur}},
\,\, \int_{\Gamma_h} | \partial_{\tau}  \langle  \vect{\psi} , \nu \rangle |^2 \,  d \Ha^1= 1 \right\} > 0.
\]

Choose a sequence $(\psi_n)$ such that $\psi_n$ are smooth, satisfy \eqref{pertur}, $ \int_{\Gamma_h} | \partial_{\tau} 
\langle  \vect{\psi_n} , \nu
\rangle |^2 \,  d \Ha^1= 1$ and 
\[
\partial^2 \F (h,u)[\psi_n] \to c_0. 
\] 
By restricting to a subsequence, we may assume that $\langle  \vect{\psi_n} , \nu \rangle \rightharpoonup f$  weakly in
$H^1(\Gamma_h)$. By defining
\[
\psi(\theta) := \frac{f \left(h(\theta) \sigma(\theta) \right)}{\langle \sigma, \nu \rangle} = \frac{f \left(h(\theta)
\sigma(\theta) \right)}{h(\theta)} \, \sqrt{h^2(\theta) + h'^2(\theta)} 
\]
we see that $f = \langle  \vect{\psi}, \nu \rangle$, for some $\psi  \in H_{\sharp}^1(\R)$. Moreover since 
$\int_{\Gamma_h} f\, d \Ha^1 = 0$, the function $\psi$ satisfies  \eqref{pertur}.

Next we prove that $\F(h,u)$ has the following lower semicontinuity property
\begin{equation}
\label{lowersemi}
\lim_{n \to \infty}\partial^2 \F (h,u)[\psi_n] \geq \partial^2 \F (h,u)[ \psi].
\end{equation}
Indeed, since $\langle  \vect{\psi_n} , \nu \rangle \rightharpoonup \langle  \vect{\psi} , \nu \rangle$ weakly in
 $H^1(\Gamma_h)$ then $\langle  \vect{\psi_n} , \nu \rangle \to \langle  \vect{\psi} , \nu \rangle$ strongly in
$L^2(\Gamma_h)$. Therefore we only need to check the convergence of the first term in \eqref{biliear}.

First of all, the smoothness of $\psi_n$ implies that $u_{\psi_n}$ is smooth. 
Consider the domain $\tilde{\Omega}_h = B_{2R_0} \setminus F_h$ and  the map $w_n(z)= u_{\psi_n}(z) + A_n z + b_n$,
where $A_n$ is an antisymmetric matrix and $b_n\in\R^2$ is chosen  such that $\int_{\tilde{\Omega}}w_n \, dz = 0$. Notice that   $w_n \in H^1(\tilde{\Omega}_h)$ and by Sobolev-Poincaré inequality it holds $\norm{w_n}_{L^2(\tilde{\Omega}_h)} \leq  C \norm{D w_n}_{L^2(\tilde{\Omega}_h)}$. By choosing $A_n$ such that $\int_{\tilde{\Omega}_h} Dw_n  \, dz = \int_{\tilde{\Omega}_h} Dw_n^T \, dz$ we have by Korn's inequality
(Theorem \ref{kornann}) that $ ||Dw_n||_{L^2(\tilde{\Omega}_h)} \leq \,  C \, ||E(w_n)||_{L^2(\tilde{\Omega}_h)} $.
Moreover,
since $u_{\psi_n} \equiv 0$ outside $B_{R_0}$, we have $ ||E(w_n)||_{L^2(\tilde{\Omega}_h)} =
||E(u_{\psi_n})||_{L^2(\Omega_h)}$.
By Remark \ref{propertest} we may use $w_n$ as a test function in \eqref{u_psi} and using Hölder's inequality and
the trace theorem we get 
\begin{equation} \label{WeNeedThis}
\begin{split}
\int_{\Omega_h}2 Q( E(u_{\psi_n})) \, dz &= - \int_{\Gamma_h} \diver_{\tau}
\left( \langle \vect{\psi_n},  \nu \rangle\,  \C E(u) \right) \cdot w_n  \, d \Ha^1 \\
& \leq \norm{\langle \vect{\psi_n},  \nu \rangle\,  \C E(u)}_{H^1(\Gamma_h)}
\norm{w_n}_{L^2(\Gamma_h)}\\
&\leq C \norm{\langle \vect{\psi_n},  \nu \rangle\,  \C E(u)}_{H^1(\Omega_h)}
\norm{D w_n}_{L^2(\tilde{\Omega}_h)}\\
&\leq C \norm{\langle \vect{\psi_n},  \nu \rangle\,  \C E(u)}_{H^1(\Omega_h)}
\norm{E( u_{\psi_n})}_{L^2({\Omega}_h)}\,.\\
\end{split}
\end{equation}
Therefore
\[
 \norm{D w_n}_{L^2({\tilde{\Omega}}_h)}\leq C \norm{E(u_{\psi_n})}_{L^2(\Omega_h)}\leq C\,.
 \]

However, since  $u_{\psi_n} \equiv 0$ outside $B_{R_0}$ we get 
\[
|B_{2R_0} \setminus B_{R_0}| \, |A_n|^2 =  \int_{B_{2R_0} \setminus B_{R_0}} |D w_n|^2 \, dz \leq C .
\]
This implies that the matrices $A_n$ are bounded and therefore $||Du_{\psi_n}||_{L^2(\Omega_h)} \leq \,  C$. 

By  the compactness of the trace operator we now have that $u_{\psi_n} \to u_{\psi}$ in $L^2(\Gamma_h)$ up to a
subsequence. Use $u_{\psi_n}$ as a test function in \eqref{u_psi} to obtain
\[
\begin{split}
\lim_{n \to \infty} \int_{\Omega_h}2 Q( E(u_{\psi_n})) \, dz &= - \lim_{n \to \infty}  \int_{\Gamma_h} \diver_{\tau}
\left( \langle \vect{\psi_n},  \nu \rangle\,  \C E(u) \right) \cdot u_{\psi_n}  \, d \Ha^1 \\
&=- \int_{\Gamma_h} \diver_{\tau} \left( \langle \vect{\psi},  \nu \rangle\,  \C E(u) \right) \cdot u_{\psi}  \, d \Ha^1
\\
&= \int_{\Omega_h}2 Q( E(u_{\psi})) \, dz.
\end{split}
\]
This proves \eqref{lowersemi}.

The claim now follows since if $\psi \neq 0$, the lower semicontinuity  \eqref{lowersemi} implies
\[
c_0 = \lim_{n \to \infty}\partial^2 \F (h,u)[\psi_n] \geq \partial^2 \F (h,u)[ \psi] > 0.
\]
On the other hand if $\psi \equiv 0$ then the constraint $   \int_{\Gamma_h} | \partial_{\tau}  \langle  \vect{\psi_n} ,
\nu \rangle |^2 \,  d \Ha^1= 1$ yields 
\[
c_0 = \lim_{n \to \infty}\partial^2 \F (h,u)[ \psi_n] = 1.
\]
\end{proof}

\section{\texorpdfstring{$ C^{1, 1}$-local minimality}{C\^{}\{1,1\}-local minimality}}\label{Sec:4}

In this section we perform a second order analysis of the functional \eqref{functional-relaxed} with respect to
$C^{1,1}$-topology in the spirit of \cites{dambrine-pierre}. The main result is  Proposition \ref{H2-localmin} where it
is shown that a critical point $(h,u) \in X_{\reg}(u_0)$ with positive second variation is a strict local minimizer in
the $C^{1,1}$-topology, and that the functional satisfies a growth estimate. We point
out that, according to  Lemma \ref{easylemma}, the second variation at $(h,u)$ is coercive  with respect to a norm which
is weaker than the $C^{1,1}$-norm. Therefore the local minimality does not follow directly from Lemma \ref{easylemma}.
The idea is to prove a coercivity bound in a whole $C^{1,1}$-neighborhood of the critical point, which is carried out in Lemma
\ref{2ndpostive}. The main difficulty is to control the bulk energy, which will be done by using regularity theory for
linear elliptic systems. We prove the main result first without worrying about the technicalities. All the
technical lemmata are proven later in the section.

\begin{proposition}
\label{H2-localmin}
Suppose that the critical pair $(h,u) \in X_{\reg}(u_0)$ is a point of positive second variation such that $0
< h < R_ 0 $. There exists $\delta>0$ such that for any admissible pair $(g,v) \in X(u_0)$ with $g
\in C^{1,1}_\sharp(\R)$, $||g||_{L^2([0, 2 \pi))}=||h||_{L^2([0, 2 \pi))} $ and $ ||h-g||_{C^{1,1}(\R)} 
\leq \delta$ we have
\[
\F(g,v) \geq \F(h,u) + c_1 ||h- g||_{L^2([0, 2 \pi))}^2.
\]
\end{proposition}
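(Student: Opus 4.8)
The plan is to argue by contradiction, following the strategy of Fusco--Morini. Suppose the statement fails: there is a sequence $(g_n, v_n) \in X(u_0)$ with $g_n \in C^{1,1}_\sharp(\R)$, $\|g_n\|_{L^2} = \|h\|_{L^2}$, $\eps_n := \|h - g_n\|_{C^{1,1}} \to 0$, $g_n \neq h$, and
\[
\F(g_n, v_n) < \F(h,u) + c_1 \|h - g_n\|_{L^2}^2
\]
for a sequence $c_1 \to 0$ (or for a fixed small $c_1$ to be determined). Since only the minimization over the elastic field matters for lowering the energy, we may replace $v_n$ by the elastic equilibrium $u_n$ associated to $g_n$, which only decreases $\F$, so WLOG $v_n = u_n$. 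Write $\psi_n := g_n - h$ and set $t_n := \|\psi_n\|_{L^2}$; we want to expand $\F(g_n, u_n)$ to second order along a path from $h$ to $g_n$ and extract the sign of the second variation. The natural path is the linear interpolation $h_s := h + s(g_n - h)$, $s \in [0,1]$, with $u_s$ the associated elastic equilibrium; note $\dot h_s = \psi_n$ is $s$-independent and $\ddot h_s = 0$.

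Next I would Taylor-expand $s \mapsto \F(h_s, u_s)$ on $[0,1]$. At $s=0$ the first variation vanishes up to the volume constraint: since $(h,u)$ is critical, the term $\frac{d}{ds}\F(h_s,u_s)|_{s=0} = \int_{\Gamma_h}\langle \underline{\psi_n},\nu\rangle(Q(E(u)) - k)\,d\Ha^1$, and because $Q(E(u))-k$ is constant on $\Gamma_h$ this equals $\text{const}\cdot\int_{\Gamma_h}\langle\underline{\psi_n},\nu\rangle\,d\Ha^1$. The constraint $\|g_n\|_{L^2}=\|h\|_{L^2}$ gives $\int_0^{2\pi}(2h\psi_n + \psi_n^2)\,d\theta = 0$, so $\int_0^{2\pi} h\psi_n\,d\theta = -\tfrac12\|\psi_n\|_{L^2}^2 = O(t_n^2)$, hence the first variation is $O(t_n^2)$ — of the same order as the claimed bound, which is the first technical subtlety and is handled exactly as in Remark \ref{term-vanish}. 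Then
\[
\F(g_n,u_n) = \F(h,u) + O(t_n^2) + \frac12\int_0^1 (1-s)\,\frac{d^2}{ds^2}\F(h_s,u_s)\,ds\cdot 2,
\]
so it suffices to show $\frac{d^2}{ds^2}\F(h_s, u_s) \geq c\, t_n^2$ uniformly for $s \in [0,1]$ and $n$ large. By the second-variation formula \eqref{2ndFormula} (the last two terms give only $O(t_n^2)$ contributions since $\ddot h_s = 0$ and $Q(E(u_s))-k_s$ is close to constant), this reduces to a lower bound on the quadratic-form-type expression at the \emph{perturbed} configuration $h_s$ applied to the fixed direction $\psi_n$. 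Up to a projection correcting $\psi_n$ to satisfy the infinitesimal volume constraint $\int h_s \tilde\psi_n = 0$ at $h_s$ (which costs $O(t_n^2)$), this is exactly the statement of Lemma \ref{2ndpostive}: the weak coercivity of Lemma \ref{easylemma} is stable under $C^{1,1}$-small perturbations, giving $\partial^2\F(h_s, u_s)[\tilde\psi_n] \geq \tfrac{c_0}{2}\|\langle\underline{\tilde\psi_n},\nu_{h_s}\rangle\|_{H^1(\Gamma_{h_s})}^2$.

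The final step is to convert the $H^1(\Gamma_{h_s})$-norm lower bound back into $t_n^2 = \|\psi_n\|_{L^2([0,2\pi))}^2$. Since $\langle\underline{\psi_n},\nu_{h_s}\rangle$ in polar coordinates equals $\psi_n\cdot\frac{h_s}{\sqrt{h_s^2 + (h_s')^2}}$ and $0 < h < R_0$ with $h_s \to h$ in $C^{1,1}$, the factor is bounded above and below, so $\|\langle\underline{\psi_n},\nu_{h_s}\rangle\|_{L^2(\Gamma_{h_s})}^2 \geq c\,t_n^2$; combining with the contradiction inequality and absorbing the $O(t_n^2)$ error terms by choosing $c_1$ small and $n$ large yields $c\,t_n^2 \leq \F(g_n,u_n) - \F(h,u) - (\text{first variation}) \leq (c_1 + o(1))\,t_n^2$, a contradiction for $c_1$ small. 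I expect the main obstacle to be the uniform (in $s$ and $n$) control of the bulk term $\int_{\Omega_{h_s}} Q(E(u_s))\,dz$ and its derivatives: one must show the elastic equilibria $u_s$ and the solutions $u_{\psi_n}$ of \eqref{u_psi} on the moving domains $\Omega_{h_s}$ depend continuously (in $H^1$, with uniform bounds) on the $C^{1,1}$-small perturbation, which requires the elliptic regularity / Korn-inequality machinery flagged in the section introduction and carried out in Lemma \ref{2ndpostive} — this is precisely where the $C^{1,1}$ (rather than merely $C^1$) hypothesis is used, since it guarantees uniform $L^\infty$ bounds on the curvatures $k_{h_s}$ and hence uniform elliptic estimates.
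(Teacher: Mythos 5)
Your overall strategy (interpolate from $h$ to $g$, apply the second variation formula along the path, invoke the stability Lemma \ref{2ndpostive}, convert the $H^1(\Gamma)$ bound back to $\|h-g\|_{L^2}$) is the same as the paper's. But there is a genuine gap in how you treat the path. You interpolate linearly, $h_s = h + s(g_n-h)$, which does \emph{not} preserve the constraint $\|h_s\|_{L^2}=\|h\|_{L^2}$ along the way. As a consequence the first variation at $s=0$ is not a harmless error: it equals $-\lambda\int_0^{2\pi} h\,\psi_n\,d\theta = \tfrac{\lambda}{2}\,t_n^2$, where $\lambda$ is the Lagrange multiplier $Q(E(u))-k$ from \eqref{euler}. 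This is \emph{exactly} of the order $t_n^2$ you are trying to bound from below, with a coefficient of uncontrolled sign and size; it cannot be "absorbed by choosing $c_1$ small and $n$ large," since $c_1$ only controls the upper-bound side of your contradiction inequality. Likewise, the last term of \eqref{2ndFormula} along your path contributes roughly $-\lambda\,t_n^2$ at each $s$ (the identity of Remark \ref{term-vanish} fails because your $\dot h_s,\ddot h_s$ do not satisfy the differentiated volume constraint), again of critical order. These two contributions do in fact cancel — they must, by path-independence of $\F(g_n)-\F(h)$ — but your write-up discards both as negligible "$O(t_n^2)$ errors," which, if taken literally, makes the final inequality false whenever $\lambda<0$. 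Citing Remark \ref{term-vanish} does not repair this: that remark applies only to volume-preserving perturbations.

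The paper sidesteps the issue entirely by normalizing the path, $g_t = (h+t(g-h))/\|h+t(g-h)\|_{L^2}$, so that the volume constraint holds exactly for every $t$. Then $f'(0)=0$ exactly, the Lagrange-multiplier parts of the last two terms in \eqref{2ndFormula} vanish identically by the constraint identity, and only the uniformly small factor $Q(E(v_t))-k_t-\lambda$ survives — which is what Lemma \ref{2ndpostive} (stated precisely for this normalized path, not the linear one) controls. To repair your argument you must either adopt this normalization, or explicitly compute the first-order term and the $\lambda$-part of the $I_5$-type term along the linear path and exhibit their cancellation; as written, the proof does not close. A secondary, minor point: the contradiction framing is unnecessary — once the uniform bound $f''(t)\geq \tilde c\,\|h-g\|_{L^2}^2$ is in hand, the Taylor identity $\F(g,v)-\F(h,u)=\int_0^1(1-t)f''(t)\,dt$ gives the estimate directly.
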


\begin{proof}
Assume first that $g \in C_{\sharp}^{\infty}(\R)$ and  $ ||h-g||_{C^{2}(\R)}  \leq \delta$. By scaling we may assume
that $||h||_{L^2([0, 2 \pi))} = \left( \int_0^{2 \pi} h^2 d \theta \right)^{\frac{1}{2}} = 1$. We define 
\[
g_t:= \frac{h + t (g-h)}{||h + t (g-h)||_{L^2}}
\]
so that  $g_t$  satisfies the volume constraint, and set
\[
f(t): = \F(g_t, v_t)\,,
\]
where $v_t$ are the elastic equilibria associated to $g_t$. We calculate $\frac{d^2}{dt^2} \F(g_t, v_t)$ for every
$t \in [0,1)$ by
applying the formula \eqref{2ndFormula} to  $(g_t)_s= g_{t+s}$ of $g_t$ and get
\begin{equation}
\begin{split}
f''(t)= \frac{d^2}{dt^2} \F(g_t,v_t) =  &- \int_{\Omega_{g_t}} 2  Q(E(\dot{v}_t))  \,  dz  + \int_{\Gamma_{g_t}} | 
\partial_{\tau_t} \langle\vect{\dot{g}_t}, \nu_t \rangle |^2 \,  d \Ha^1  \\
&- \int_{\Gamma_{g_t}}  ( \partial_{\nu_t} Q(E(v_t)) + k_t^2) \, \langle \vect{\dot{g}_t}, \nu_t \rangle^2 \,  d \Ha^1  
\\ 
& +  \int_{\Gamma_{g_t}}  (  Q(E(v_t)) - k_t) \, \partial_{\tau_t} \left( \langle  \vect{\dot{g}_t} , \nu_t \rangle
\langle  \vect{\dot{g}_t} , \tau_t \rangle \right)  \,  d \Ha^1   \\ 
&- \int_{\Gamma_{g_t}} ( Q(E(v_t)) - k_t) \, \left( \frac{\langle  \vect{\dot{g}_t} , \nu_t \rangle^2}{\langle 
\vect{g_t} , \nu_t \rangle} +  \,\langle  \vect{\ddot{g}_t} , \nu_t \rangle \right) \, d \Ha^1.
\end{split}
\end{equation}
 Here $\nu_t$ is the outer normal, $\tau_t$ the tangent, $k_t$ the curvature of $\Gamma_{g_t}$ and $\dot{v}_t$ is
the unique solution to 
\[
\int_{\Omega_{g_t}} \C E(\dot{v}_t):E(w) \, dz = - \int_{\Gamma_{g_t}} \diver_{\tau_t} \left( \langle \vect{\dot{g}_t}, 
\nu_t \rangle\,  \C E(v_t) \right) \cdot w  \, d \Ha^1\,,\qquad \forall w \in \A(\Omega_{g_t})\,.
\]

Remark  \ref{term-vanish} and Lemma \ref{easylemma} yield 
\[
f''(0)= \frac{d^2}{dt^2} \F(g_t,v_t) \big|_{t=0} = \partial^2 \F(h,u)[\dot{g}] \geq c_0 || \langle   \vect{\dot{g}} ,
\nu \rangle  ||_{H^1(\Gamma_h)}^2. 
\]
It will be shown later in Lemma \ref{2ndpostive} that, when $\delta>0$ is chosen to be small enough, the previous
inequality implies 
\begin{equation}
\label{f''(t)}
f''(t)= \frac{d^2}{dt^2} \F(g_t,v_t)  \geq  \frac{c_0}{2} || \langle   \vect{\dot{g}_t} , \nu_t \rangle 
||_{H^1(\Gamma_{g_t})}^2\qquad \text{for all } \,\, t \in [0,1).
\end{equation}

It is now clear that $ || \langle   \vect{\dot{g}_t} , \nu_t \rangle  ||_{H^1(\Gamma_{g_t})}^2 \geq c \, || \dot{g}_t
||_{L^2([0, 2 \pi))}^2$ holds for all $t \in [0,1]$. Since  $\int_0^{2 \pi} g^2 \, d \theta = \int_0^{2 \pi} h^2 \, d
\theta$ we have  $ \int_0^{2 \pi} (h-g)^2 d
\theta= 2
\, \int_0^{2 \pi} h(h -g) \, d \theta$ and therefore
\begin{equation}
\label{l^2norm}
|| \dot{g}_t ||_{L^2([0, 2 \pi))}^2 = \frac{1}{|| h +t(g-h) ||_{L^2}^4} \left(    \int_0^{2 \pi}(h-g)^2 d \, \theta
- \frac{1}{4}
\left( \int_0^{2 \pi}(h -g)^2 \,d \theta \right)^2   \right)\,.
\end{equation}
Since  $ \int_0^{2 \pi} (h-g)^2 d \theta $ is very small we obtain from
\eqref{l^2norm} that 
\begin{equation}
\label{L^2norm}
|| \dot{g}_t ||_{L^2([0, 2 \pi))}^2 \geq \frac{1}{2}||h-g||_{L^2([0, 2 \pi))}^2.
\end{equation}

From \eqref{f''(t)} and \eqref{L^2norm} we conclude that $f''(t) \geq \tilde{c}|| h-g||_{L^2}^2 $. Since $(h,u)$ is a
critical pair we have $f'(0)=0$ and therefore
\[
\begin{split}
\F(g,v) - \F(h,u) &= f(1) - f(0) = \int_0^1 (1-t)f''(t) \, dt \\
&\geq  \tilde{c}\, ||h -g ||_{L^2([0, 2 \pi))}^2 \int_0^1 (1-t) \, dt \\
&= \frac{ \tilde{c}}{2} \,||h-g||_{L^2([0, 2 \pi))}^2,
\end{split}
\]
which proves the claim when $g$ is smooth.

When $g \in C^{1,1,}_\sharp(\R)$ the claim follows by using a standard approximation.
\end{proof}

It remains to prove \eqref{f''(t)}. The proof is based on a compactness argument and for that we
have to study the continuity of the second variation formula \eqref{2ndFormula}.
To control the boundary terms in \eqref{2ndFormula} we need fractional Sobolev spaces whose definition and basic
properties are recalled here. The function $h$ is as in Proposition \ref{H2-localmin} and $\Gamma_h$ is its graph.

\begin{definition}
For $0 < s < 1$ and $1 < p < \infty$ we define the fractional Sobolev space $W^{s,p}(\Gamma_h)$ as the set of those
functions $v \in L^p(\Gamma_h)$  for which the Gagliardo seminorm is finite, i.e.
\begin{equation}
\label{gagli}
[v]_{s,p; \,\Gamma_h} = \left( \int_{\Gamma_h}\int_{\Gamma_h} \frac{|v(z)- v(w)|^p}{|z-w|^{1+sp}} \,  d\Ha^1(w) 
d\Ha^1(z) \right)^{1/p}  < \infty.
\end{equation}
The fractional Sobolev norm is defined as $||v||_{W^{s,p}(\Gamma_h)} := ||v||_{L^p(\Gamma_h)} + [v]_{s,p; \, \Gamma_h}
$.
\end{definition}
The space $W^{-s,p}(\Gamma_h)$ is the dual space of  $W^{s,p}(\Gamma_h)$ and the dual norm of a function $v$ is defined
as
\[
||v||_{W^{-s,p}(\Gamma_h)} := \sup \left\{ \int_{\Gamma_h} vu \,  d\Ha^1(z) \mid ||u||_{W^{s,p}(\partial F_h)} \leq 1
\right\}\,. 
\]
We also use the notation $H^s(\Gamma_h)$ for $W^{s,2}(\Gamma_h)$ for $-1 < s < 1$ and the convention $W^{0,p}(\Gamma_h)
:= L^p(\Gamma_h)$. By Jensen's inequality we have the following classical embedding theorem. 
\begin{theorem}
\label{impedding}
Let $-1 \leq t \leq s \leq 1, \, q \geq p$ such that $s - 1/p \geq t - 1/q$. Then there is a constant $C$ depending on
$t,s,p,q$ and on the $C^1$-norm of $h$ such that  
\[
|| v||_{W^{t,q}(\Gamma_h)} \leq C ||v||_{W^{s,p}(\Gamma_h)}.
\]
\end{theorem}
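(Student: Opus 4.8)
The plan is to transfer the inequality to the one-dimensional torus $\To:=\R/2\pi\Z$ and then to reduce it, through a chain of elementary manipulations, to the classical fractional Sobolev embedding. First I would use the parametrization $\gamma(\theta):=h(\theta)\sigma(\theta)$ of $\Gamma_h$. Since $h\in C^\infty_\sharp(\R)$ and $0<h<R_0$, the set $\Gamma_h$ is an embedded $C^\infty$ closed curve, so $\gamma$ is a bi-Lipschitz homeomorphism from $\To$ (with its flat metric) onto $\Gamma_h$, and for an embedded $C^1$ curve chord length and arclength are comparable; the constants involved are controlled by $\|h\|_{C^1}$ together with $\min_\theta h>0$, which is the dependence asserted, as $h$ is fixed. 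Composition with $\gamma$ leaves the Hausdorff measure and the chordal distances comparable, hence it induces an isomorphism $W^{s,p}(\Gamma_h)\cong W^{s,p}(\To)$ with norm distortion bounded by those constants for every $0<s<1$, and likewise for $L^p$, for the ordinary Sobolev space ($s=1$), and, by passing to duals, for the negative-order spaces. It therefore suffices to prove $\|v\|_{W^{t,q}(\To)}\le C\,\|v\|_{W^{s,p}(\To)}$ on $\To$.

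Second, I would reduce to the case of non-negative smoothness indices. If $s,t\le 0$, the hypotheses $t\le s$, $q\ge p$ and $s-1/p\ge t-1/q$ become, after passing to conjugate exponents, precisely the same hypotheses for the quadruple $(-t,q',-s,p')$, whose smoothness indices satisfy $-t\ge -s\ge 0$; since $W^{t,q}(\To)=\bigl(W^{-t,q'}(\To)\bigr)^{\!*}$ and similarly for $s$, the desired inequality is just the adjoint of $W^{-t,q'}(\To)\hookrightarrow W^{-s,p'}(\To)$. If $t<0\le s$, I would go through an intermediate space $L^r(\To)=W^{0,r}(\To)$: the hypothesis $s-1/p\ge t-1/q$ is exactly what makes the interval $\bigl[\,\max(1/p-s,\,1/q),\ \min(1/p,\,1/q-t)\,\bigr]$ non-empty, and for any $r$ with $1/r$ lying in it one has $W^{s,p}(\To)\hookrightarrow L^r(\To)$ (an instance with non-negative indices) and $L^r(\To)\hookrightarrow W^{t,q}(\To)$ (the adjoint of $W^{-t,q'}(\To)\hookrightarrow L^{r'}(\To)$, again with non-negative indices). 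Thus we may assume $0\le t\le s\le 1$.

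Third, in this regime I would split the embedding into two steps. Step one is the loss of smoothness at fixed integrability: for $0<a\le b<1$ one has $[v]_{a,p;\To}\le(\operatorname{diam}\To)^{\,b-a}\,[v]_{b,p;\To}$, because $d(z,w)^{(b-a)p}$ is bounded on $\To\times\To$; in particular $W^{s,p}(\To)\hookrightarrow W^{s',p}(\To)$ for every $s'\le s$ (when $s=1$ this is the standard embedding of $W^{1,p}$ into the lower-order fractional spaces). Step two is the scale-critical fractional Sobolev embedding $W^{s',p}(\To)\hookrightarrow W^{t,q}(\To)$ in the borderline case $s'-1/p=t-1/q$, $q\ge p$. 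Choosing $s':=t+1/p-1/q$ and chaining the two steps then proves the theorem, since $s'\ge t$ because $q\ge p$ and $s'\le s$ exactly because $s-1/p\ge t-1/q$; all the constants so far depend only on $s,t,p,q$ and on the bi-Lipschitz constant of $\gamma$.

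The one step that is not pure bookkeeping is Step two, a genuine Sobolev-type inequality. A transparent way to see that it is classical is to recall the identification $W^{s,p}(\To)=B^{s}_{p,p}(\To)$ with a Besov space (valid for $0<s<1$, $1<p<\infty$) and to invoke the standard Besov embedding $B^{s'}_{p,p}(\To)\hookrightarrow B^{t}_{q,q}(\To)$, which holds whenever $s'-1/p=t-1/q$ and $q\ge p$. A self-contained argument instead performs a dyadic decomposition of the Gagliardo double integral in the spirit of Hedberg's potential estimates, where Jensen's inequality applied to the normalized measure on $\To$ is precisely what converts the averages $v(z)-\tfrac{1}{2\pi}\int_{\To}v$ into Gagliardo-type quantities; one distinguishes the subcritical range $s'p<1$ (first reaching $L^{q_*}(\To)$ with $1/q_*=1/p-s'$, then recovering the order-$t$ seminorm by interpolating this with Step one), the supercritical range $s'p>1$ (using the Morrey embedding $W^{s',p}(\To)\hookrightarrow C^{0,\,s'-1/p}(\To)$, again via dyadic averages and Jensen, together with the elementary bound of a low-order Gagliardo seminorm by a Hölder seminorm), and the isolated value $s'p=1$, which occurs only when $s-1/p=t-1/q$ exactly and is handled by the borderline exponential-integrability estimate for $W^{1/p,p}$. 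Obtaining this Sobolev inequality is the main obstacle; everything around it is just the finiteness of $|\To|$, duality, and tracking the $\|h\|_{C^1}$-dependent constants.
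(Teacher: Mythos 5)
Your outline is correct, but there is nothing in the paper to compare it against: Theorem \ref{impedding} is stated without proof, with only the remark ``by Jensen's inequality we have the following classical embedding theorem.'' Jensen's inequality accounts only for the trivial parts of the statement (lowering the integrability exponent on a set of finite measure, and lowering the smoothness index via the boundedness of $\abs{z-w}$ in the Gagliardo kernel); the actual content, which you correctly isolate, is the scale-critical fractional Sobolev embedding $W^{s',p}\hookrightarrow W^{t,q}$ with $s'-1/p=t-1/q$, and your two ways of obtaining it (the Besov identification $W^{s,p}=B^s_{p,p}$, or a direct dyadic/Hedberg argument split into the subcritical, supercritical and $s'p=1$ cases) are both standard and adequate. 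The transfer to the flat torus is also fine: for a radial graph $\rho=h(\theta)$ with $0<\gamma\leq h$, chordal and arclength distances are comparable with constants controlled by $\norm{h}_{C^1}$ and $\gamma$, and you are right to note that the lower bound on $h$ enters alongside the $C^1$-norm quoted in the statement. The one point where you silently deviate from the paper is the duality convention: the paper declares $W^{-s,p}(\Gamma_h)$ to be the dual of $W^{s,p}(\Gamma_h)$ (not of $W^{s,p'}(\Gamma_h)$), so your reduction of the negative-index and mixed cases by passing to conjugate exponents matches the standard convention rather than the paper's. This is harmless here --- the only negative-order space the paper ever uses is $H^{-1/2}$, where $p=p'=2$ and the two conventions coincide, and under the paper's literal convention the negative-index case becomes even easier (it reduces to an embedding that lowers the integrability exponent) --- but it is worth flagging if you intend your argument to prove the theorem exactly as stated.
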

We also have the following trace theorem.
\begin{theorem}
\label{tracethm}
If $p>1$ there exists a continuous linear operator
$T:W^{1,p}(\Omega_h) \to W^{1 - 1/p,p}(\Gamma_h)$ such that $Tv = v|_{\Gamma_h}$ whenever $v$ is continuous on
$\bar{\Omega}_h$. The norm of $T$ depends on the $C^1$-norm of $h$ and $\gamma$.
\end{theorem}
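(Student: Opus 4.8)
The plan is to flatten $\Gamma_h$ by a global change of variables, reduce the statement to the classical trace inequality on a cylinder, and throughout keep every constant quantitative in $\|h\|_{C^1}$. We only use that $h\in C^1_\sharp(\R)$ together with the standing bounds $0<\min_\R h\le\max_\R h<R_0$.

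First I would straighten the boundary. The map
\[
\Psi(\theta,s):=\bigl((1-s)\,h(\theta)+s\,R_0\bigr)\,\sigma(\theta),\qquad \theta\in\R,\ s\in[0,1],
\]
is $2\pi$-periodic in $\theta$ and is a $C^1$-diffeomorphism of the cylinder $Q:=\{(\theta,s):\theta\in\R,\ 0\le s\le 1\}$ (with the $2\pi$-periodic identification in $\theta$) onto $\overline{\Omega}_h$, sending $\{s=0\}$ onto $\Gamma_h$ and $\{s=1\}$ onto $\partial B_{R_0}$. Its Jacobian equals $\bigl((1-s)h(\theta)+sR_0\bigr)(R_0-h(\theta))$, hence is bounded above and below by positive constants depending only on $\min_\R h$, $\max_\R h$ and $R_0$, while $\|D\Psi\|_\infty$ and $\|D\Psi^{-1}\|_\infty$ are bounded in terms of $\|h\|_{C^1}$ and $\min_\R h$. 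Consequently $v\mapsto v\circ\Psi$ is an isomorphism $W^{1,p}(\Omega_h)\to W^{1,p}(Q)$ whose norm and inverse norm are controlled by $\|h\|_{C^1}$.

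Next I would check that the parametrization $\Psi_0(\theta):=\Psi(\theta,0)=h(\theta)\sigma(\theta)$ identifies $W^{s,p}(\Gamma_h)$ — defined via the Gagliardo seminorm \eqref{gagli} with the \emph{Euclidean} distance $|z-w|$ — with the fractional Sobolev space of $2\pi$-periodic functions on $\R$, with equivalent norms and with constants depending only on $s$, $p$, $\|h\|_{C^1}$ and $\min_\R h$. Indeed the speed $|\Psi_0'(\theta)|=\sqrt{h(\theta)^2+h'(\theta)^2}$ lies between $\min_\R h$ and $\bigl((\max_\R h)^2+\|h'\|_\infty^2\bigr)^{1/2}$, so for $|\theta_1-\theta_2|$ small the chord $|\Psi_0(\theta_1)-\Psi_0(\theta_2)|$ is comparable to $|\theta_1-\theta_2|$, whereas for $|\theta_1-\theta_2|$ not small the chord is bounded below by a positive constant (since $\Gamma_h$ is a simple closed curve staying at distance $\ge\min_\R h$ from the origin). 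Splitting the double integral \eqref{gagli} into the "far" part, dominated by $\|v\|_{L^p(\Gamma_h)}^p$, and the "near-diagonal" part, where the chord–arc comparison applies, yields the two-sided estimate. By the two reductions above, the theorem follows from the classical fact that there is $C=C(p)$ with $\|w(\cdot,0)\|_{W^{1-1/p,p}}\le C\|w\|_{W^{1,p}(Q)}$ for all $w\in C^1(\overline Q)$: one writes $w(\theta_1,0)-w(\theta_2,0)$ as a telescoping sum through the point $(\theta_i,t)$ with $0<t<|\theta_1-\theta_2|$, bounds each piece by a line integral of $|Dw|$, raises to the $p$-th power, averages in $t$, and applies Fubini and a Hardy-type inequality; the $L^p$ bound of the trace is elementary.

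Finally, transporting back, I would define $T$ by $(Tv)\circ\Psi_0:=(v\circ\Psi)|_{s=0}$; the steps above show that $T:W^{1,p}(\Omega_h)\to W^{1-1/p,p}(\Gamma_h)$ is bounded with norm depending only on $p$ and $\|h\|_{C^1}$ (and the fixed $\min_\R h$, $R_0$). For $v\in C(\overline{\Omega}_h)$ one has $Tv=v|_{\Gamma_h}$, since on $C^1(\overline{\Omega}_h)$ the operator $T$ is literally the restriction to $\Gamma_h$, and any such $v$ can be approximated by $C^1$ functions converging to it both in $W^{1,p}(\Omega_h)$ and uniformly on $\overline{\Omega}_h$. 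The only genuinely non-formal point — the "main obstacle" — is the bookkeeping that keeps all constants quantitative in $\|h\|_{C^1}$, in particular the global chord–arc comparison of the third step; the trace inequality on the cylinder itself is classical and I would simply cite it.
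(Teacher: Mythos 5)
The paper does not prove Theorem \ref{tracethm} at all; it is recorded as a classical fact, so the only question is whether your argument is sound, and it is. The flattening map $\Psi$, the transfer of the $W^{1,p}$ and $W^{1-1/p,p}$ norms, and the reduction to the trace inequality on the fixed cylinder $Q$ are all correct, and the two ingredients you leave as citations (the cylinder trace estimate and the density of $C^{1}(\overline{\Omega}_h)$ in $W^{1,p}(\Omega_h)$ with uniform convergence for continuous $v$) are genuinely classical. Two small remarks on the quantitative bookkeeping you single out as the main obstacle. The chord--arc comparison requires no near/far case split: in polar coordinates
\[
\bigl|\Psi_0(\theta_1)-\Psi_0(\theta_2)\bigr|^2=\bigl(h(\theta_1)-h(\theta_2)\bigr)^2+4\,h(\theta_1)\,h(\theta_2)\sin^2\Bigl(\tfrac{\theta_1-\theta_2}{2}\Bigr)\ \ge\ 4\gamma^2\sin^2\Bigl(\tfrac{\theta_1-\theta_2}{2}\Bigr),
\]
so the chord is bounded below by $\tfrac{2\gamma}{\pi}$ times the periodic distance of the angles for \emph{all} pairs, while the bound $|\Psi_0'|\le\sqrt{2}\,\|h\|_{C^1}$ gives the reverse inequality; this yields the equivalence of the Gagliardo seminorm \eqref{gagli} with the flat periodic one, and of $d\Ha^1$ with $d\theta$, with constants depending only on $\gamma$ and $\|h\|_{C^1}$, which is exactly the dependence asserted in the statement. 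Note also that the lower bound on the Jacobian $((1-s)h+sR_0)(R_0-h)$ uses $h\le R_0-\gamma$ as well as $h\ge\gamma$; this is where the dependence on $\gamma$ (and not merely on $\min_\R h$) enters and is worth stating explicitly.
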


The next lemma will be used frequently.
\begin{lemma}
\label{fractional1}
Let $-1 < s < 1$ and suppose that $v$ is a smooth function on $\Gamma_h$ . Then the following hold.
\begin{itemize}
\item[(i)] If $a \in C^1(\Gamma_h)$ then 
\[
||av||_{W^{s,p}(\Gamma_h)} \leq C ||a||_{C^1(\Gamma_h)}||v||_{W^{s,p}(\Gamma_h)},
\]
where the constant $C$ depends on $p $, $s$ and the $C^1$-norm of $h$.
\item[(ii)] If $\Psi: \Gamma_h \to \Psi(\Gamma_h)$ is a $C^1$-diffeomorphism, then
\[
||v\circ \Psi^{-1} ||_{W^{s,p}(\Psi(\Gamma_h))} \leq C ||v||_{W^{s,p}(\Gamma_h)},
\]
where the constant $C$ depends on $p $, $s$ and the $C^1$-norms of $h$, $\Psi$ and $\Psi^{-1}$.
\end{itemize}
\end{lemma}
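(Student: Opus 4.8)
The plan is to reduce the whole statement to the range $0\le s<1$, treated by a direct computation with the Gagliardo seminorm \eqref{gagli}, and to recover the range $-1<s<0$ by duality, using the identification of $W^{s,p}(\Gamma_h)$ with the dual of $W^{-s,p}(\Gamma_h)$ recalled above (applied with $|s|\in(0,1)$). The one geometric ingredient is that $\Gamma_h$ is a chord-arc curve: since $h$ is positive and $\Gamma_h=\{h(\theta)\sigma(\theta):\theta\in\R\}$, the elementary bound $|h(\theta_1)\sigma(\theta_1)-h(\theta_2)\sigma(\theta_2)|^2\ge 2h(\theta_1)h(\theta_2)(1-\cos(\theta_1-\theta_2))$ shows that the Euclidean chord length is comparable to the arc-length distance on $\Gamma_h$, with constants depending only on $\min h$ and $\|h\|_{C^1}$; thus $|z-w|$ may be freely interchanged with the intrinsic distance on $\Gamma_h$. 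The same holds for $\Psi(\Gamma_h)$, which in the situations where (ii) is used is again a positive radial graph, and $\Psi$ is bi-Lipschitz between the two curves with constant controlled by the $C^1$-norms of $\Psi$ and $\Psi^{-1}$; combining these facts gives $c|z-z'|\le|\Psi(z)-\Psi(z')|\le C|z-z'|$ on $\Gamma_h$.

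For (i) with $0\le s<1$, the bound $\|av\|_{L^p(\Gamma_h)}\le\|a\|_{C^0}\|v\|_{L^p(\Gamma_h)}$ is immediate, and for the seminorm I would write $a(z)v(z)-a(w)v(w)=a(z)(v(z)-v(w))+(a(z)-a(w))v(w)$, take the $p$-th power and use $|x+y|^p\le 2^{p-1}(|x|^p+|y|^p)$: the first term contributes $\le 2^{p-1}\|a\|_{C^0}^p[v]_{s,p;\Gamma_h}^p$, and for the second the chord-arc property gives $|a(z)-a(w)|\le C\|a\|_{C^1}|z-w|$, so that
\[
\int_{\Gamma_h}\!\int_{\Gamma_h}\frac{|a(z)-a(w)|^p|v(w)|^p}{|z-w|^{1+sp}}\,d\Ha^1(z)\,d\Ha^1(w)\le C\|a\|_{C^1}^p\int_{\Gamma_h}|v(w)|^p\Big(\int_{\Gamma_h}|z-w|^{\,p-1-sp}\,d\Ha^1(z)\Big)\,d\Ha^1(w);
\]
since $p-1-sp>-1$ exactly because $s<1$, the inner integral is bounded uniformly in $w$ by a constant depending only on the length of $\Gamma_h$, hence on $\|h\|_{C^1}$, and the whole expression is $\le C\|a\|_{C^1}^p\|v\|_{L^p(\Gamma_h)}^p$. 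For (ii) with $0\le s<1$, let $J$ be the tangential Jacobian of $\Psi$ (the density relating the two arc-length measures); it is bounded above and below by positive constants depending on the $C^1$-norms of $\Psi,\Psi^{-1}$, and performing the change of variables $y=\Psi(z)$, $y'=\Psi(z')$ in $\|v\circ\Psi^{-1}\|_{L^p(\Psi(\Gamma_h))}$ and in $[v\circ\Psi^{-1}]_{s,p;\Psi(\Gamma_h)}$, and inserting the bounds for $J$ and the two-sided comparison of $|\Psi(z)-\Psi(z')|$ with $|z-z'|$, bounds each of these quantities by a constant times its counterpart for $v$ on $\Gamma_h$.

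For $-1<s<0$ I would use the duality $\|w\|_{W^{s,p}(\Gamma_h)}=\sup\{\int_{\Gamma_h}w\phi\,d\Ha^1:\|\phi\|_{W^{-s,p}(\Gamma_h)}\le 1\}$, with $-s\in(0,1)$. For (i): $\int_{\Gamma_h}(av)\phi=\int_{\Gamma_h}v\,(a\phi)$, and the already-proven case gives $\|a\phi\|_{W^{-s,p}(\Gamma_h)}\le C\|a\|_{C^1}\|\phi\|_{W^{-s,p}(\Gamma_h)}$, so taking the supremum yields the claim. For (ii): changing variables in the pairing on $\Psi(\Gamma_h)$ gives $\int_{\Psi(\Gamma_h)}(v\circ\Psi^{-1})\phi\,d\Ha^1=\int_{\Gamma_h}v\,(J\,(\phi\circ\Psi))\,d\Ha^1$, and then the positive-exponent case of (ii) controls $\|\phi\circ\Psi\|_{W^{-s,p}(\Gamma_h)}$ by $\|\phi\|_{W^{-s,p}(\Psi(\Gamma_h))}$ while (i) absorbs the factor $J$, so the supremum is $\le C\|v\|_{W^{s,p}(\Gamma_h)}$. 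The step I expect to be the main obstacle is the chord-arc and bi-Lipschitz comparison of Euclidean distances on $\Gamma_h$ and $\Psi(\Gamma_h)$: it is what makes the change of variables inside the Gagliardo double integral legitimate and what keeps all constants depending only on the stated $C^1$-type norms. A secondary technical point is that, in the negative-exponent part of (ii), the Jacobian $J$ must itself be an admissible multiplier on $W^{-s,p}$; this causes no trouble in the relevant applications, where $\Psi$ is smooth.
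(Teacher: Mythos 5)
The paper states Lemma \ref{fractional1} without proof, so there is nothing to compare against; judged on its own, your argument is correct and is the standard one. The reduction to $0\le s<1$ plus duality (which, with the paper's convention $\|w\|_{W^{-t,p}}=\sup\{\int w\phi:\|\phi\|_{W^{t,p}}\le 1\}$, is literally the definition of the negative-order norm, so no reflexivity or density issues arise), the Leibniz splitting $a(z)v(z)-a(w)v(w)=a(z)(v(z)-v(w))+(a(z)-a(w))v(w)$ with the integrability check $p-1-sp>-1\iff s<1$, and the chord-arc comparison $|z-w|^2\ge 2h(\theta_1)h(\theta_2)(1-\cos(\theta_1-\theta_2))$ (valid with constants depending on $\min h>0$ and $\|h\|_{C^1}$, and the lower bound on $h$ is guaranteed throughout the paper by $\gamma\le h\le R_0-\gamma$) are all sound. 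The one genuine weak point is the one you flag yourself: in the duality step for part (ii) with $-1<s<0$, the tangential Jacobian $J$ must act as a multiplier on $W^{-s,p}(\Gamma_h)$, and for that your part (i) requires $J$ Lipschitz (or at least H\"older of exponent $>-s$), hence $\Psi$ slightly better than $C^1$; so strictly speaking your proof establishes the negative-order case of (ii) with a constant depending on a $C^{1,\beta}$-norm of $\Psi$ rather than only its $C^1$-norm as the statement claims. Since the lemma is only ever applied to the smooth diffeomorphisms $\Psi_n$ with uniformly bounded $C^2$-norms, this discrepancy is harmless for the paper's purposes, but it is worth recording explicitly rather than leaving as an aside.
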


We will also need to control the regularity of the elastic equilibrium. To this aim, the
following elliptic estimate turns out to be useful, see \cite{Fusco:2009ug}*{Lemma 4.1}.
\begin{lemma}
\label{ellip1}
Suppose $(g,v) \in X(0)$ is such that $\gamma \leq  g  \leq  R_0 - \gamma $, $g \in C_{\sharp}^2(\R)$ and $v \in
\A(\Omega_g)$ satisfies
\begin{equation}
\label{pdefornormal}
\int_{\Omega_g} \C E(v) : E(w) \, dz = \int_{\Omega_g} f : E(w) \, dz \qquad \text{for every} \,\, w \in \A(\Omega_g),
\end{equation}
where $f \in C^1(\bar{\Omega}_g; \Ma^{2 \times 2})$. Then for any $p>2$ we have the following estimate 
\begin{equation}
\label{enerest1}
\begin{split}
|| E(v)||_{W^{1,p}(\Omega_g; \Ma^{2 \times 2})} &+ || \nabla \C E(v)||_{H^{-\frac{1}{2}}(\Gamma_g; \To)}  \\
&\leq C \left(|| E(v)||_{L^2(\Omega_g ; \Ma^{2 \times 2})}  + || f||_{ C^1(\bar{\Omega}_g;  \Ma^{2 \times 2} ) } 
\right),
\end{split}
\end{equation}
where $\To$ denotes the space of third order tensors and the constant $C$ depends on $\gamma$, $p$ and the $C^2$-norm of
$g$.
\end{lemma}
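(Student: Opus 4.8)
The plan is to reduce the problem to a classical Schauder-type elliptic estimate for the Lamé system by straightening the boundary, and then to reabsorb the lower-order contributions. First I would fix $p>2$ and localize: since $g \in C^2_\sharp(\R)$ with $\gamma \le g \le R_0-\gamma$, the domain $\Omega_g$ has a $C^2$ boundary $\Gamma_g$ whose norms are controlled by $\|g\|_{C^2}$, while the outer boundary $\partial B_{R_0}$ is smooth and fixed; moreover $v \equiv 0$ in $B_{R_0}^c$, so all the action is in the annular region $\{\gamma/2 < |z| < R_0\}\cap \Omega_g$ near $\Gamma_g$ and trivially in the interior. In the interior of $\Omega_g$ the estimate \eqref{enerest1} (without the boundary term) is just interior $W^{2,p}$ regularity for the constant-coefficient elliptic system $\diver \C E(v) = \diver f$, using $Q$-ellipticity (equivalently, the Legendre–Hadamard/strong ellipticity guaranteed by $\mu>0$, $\lambda>-\mu$); the right-hand side lies in $W^{1,p}$ because $f \in C^1$. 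Near $\Gamma_g$ I would flatten the boundary by a $C^2$ change of variables (using that $\Gamma_g$ is a graph in polar coordinates over $\Si$), which turns \eqref{pdefornormal} into a divergence-form system with $C^1$ coefficients and a Neumann-type boundary condition $\C E(v)[\nu] = f[\nu]$ on the flattened piece; then boundary $W^{2,p}$ estimates for elliptic systems with the complementing (Neumann) boundary condition give $\|E(v)\|_{W^{1,p}} \le C(\|E(v)\|_{L^2} + \|f\|_{C^1})$, with $C$ depending only on $\gamma$, $p$, and $\|g\|_{C^2}$.

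Once $E(v) \in W^{1,p}(\Omega_g)$ is established, the trace term $\|\nabla \C E(v)\|_{H^{-1/2}(\Gamma_g)}$ is handled as follows: $\nabla \C E(v) \in L^p(\Omega_g) \hookrightarrow L^2(\Omega_g)$ with divergence $\diver(\C E(v)) = \diver f \in C^0(\bar\Omega_g)$, so each row of $\nabla \C E(v)$ (viewed as a vector field) has an $L^2$ divergence, hence its normal trace on $\Gamma_g$ is well defined in $H^{-1/2}(\Gamma_g)$ with norm bounded by $\|\nabla \C E(v)\|_{L^2(\Omega_g)} + \|\diver f\|_{L^2(\Omega_g)}$; combined with the interior tangential bound coming from $W^{1,p} \hookrightarrow W^{1/2,2}$-trace-regularity of the tangential derivatives, one gets the full $H^{-1/2}(\Gamma_g)$ control of $\nabla \C E(v)$. (Alternatively, and perhaps more cleanly, one can cite the same localized elliptic estimate which directly yields $W^{1,p}(\Omega_g)$-regularity up to the boundary and then use the standard trace $W^{1,p}(\Omega_g) \to W^{1-1/p,p}(\Gamma_g) \hookrightarrow H^{-1/2}(\Gamma_g)$ applied componentwise to $\nabla \C E(v)$, noting $1-1/p>1/2 > -1/2$.) All constants depend only on the advertised quantities because the elliptic and trace constants depend on the domain only through the $C^{1,1}$-norm of $g$ (indeed through $\|g\|_{C^2}$) and $\gamma$.

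The main obstacle is the boundary regularity step: one must verify that the Neumann boundary condition $\C E(v)[\nu]=f[\nu]$ satisfies the Agmon–Douglis–Nirenberg complementing condition for the (strongly elliptic but not pointwise elliptic) Lamé operator, so that the full $L^p$-boundary estimate applies, and one must track that the associated constant is uniform over the class of domains $\{\gamma \le g \le R_0-\gamma,\ \|g\|_{C^2}\le M\}$. This uniformity is standard (a compactness/covering argument reduces it to a fixed number of boundary charts with uniformly bounded $C^2$ norms), and the complementing condition for the traction boundary value problem in linear elasticity is classical, so no genuinely new difficulty arises — but it is the step that requires care rather than the routine interior estimate or the trace bookkeeping. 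Since this lemma is quoted from \cite{Fusco:2009ug}*{Lemma 4.1}, in the paper it suffices to remark that the geometry here (an annular domain with one smooth fixed boundary component and one $C^2$ graphical component) is a special case of the setting there, so the cited estimate applies verbatim with the stated dependence of constants.
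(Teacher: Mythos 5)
The paper itself does not prove this lemma: it is quoted verbatim from \cite{Fusco:2009ug}*{Lemma 4.1}, so your closing remark (that it suffices to observe the present geometry is a special case of the setting there) is exactly what the paper does, and your outline of the $W^{1,p}$ bound --- interior estimates plus boundary flattening and the Agmon--Douglis--Nirenberg estimate for the traction (complementing) condition, with constants controlled by $\gamma$ and $\|g\|_{C^2}$ --- is the standard and correct route to the first half of \eqref{enerest1}.

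However, both of your proposed treatments of the term $\|\nabla \C E(v)\|_{H^{-1/2}(\Gamma_g;\To)}$ have a genuine gap. Your ``alternative'' route applies the trace operator $W^{1,p}(\Omega_g)\to W^{1-1/p,p}(\Gamma_g)$ componentwise to $\nabla \C E(v)$; but the regularity you have established only gives $\nabla \C E(v)\in L^p(\Omega_g)$ (that trace argument would require $v\in W^{3,p}$, which $f\in C^1$ does not yield), and $L^p$ functions have no boundary trace. Your first route misidentifies the object with controlled divergence: it is the matrix field $\C E(v)-f$ (row-wise) whose divergence vanishes by \eqref{pdefornormal}, not $\nabla\C E(v)$; the $H(\diver)$ normal-trace theory therefore only returns the zeroth-order quantity $(\C E(v)-f)[\nu]$ on $\Gamma_g$, not the first-order tensor $\nabla\C E(v)$. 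The correct argument proceeds by differentiating the equation: $\partial_j v$ satisfies the same system with datum $\partial_j f$, so the divergence-free field $\C E(\partial_j v)-\partial_j f=\partial_j\C E(v)-\partial_j f$ has a normal trace in $H^{-1/2}(\Gamma_g)$ bounded by its $L^2(\Omega_g)$ norm, hence by $\|E(v)\|_{W^{1,p}}+\|f\|_{C^1}$; the purely tangential components $\partial_\tau\C E(v)$ are obtained by differentiating along $\Gamma_g$ the trace $\C E(v)|_{\Gamma_g}\in W^{1-1/p,p}(\Gamma_g)\hookrightarrow H^{1/2}(\Gamma_g)$ (here $1-1/p>1/2$ since $p>2$); and the remaining components of the normal derivative are then recovered algebraically from these two pieces via the invertibility of the principal symbol of the Lam\'e operator in the normal direction. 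Without some version of this step the second half of \eqref{enerest1} is not justified.
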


We are now in position to give the proof of the inequality \eqref{f''(t)}. To control the bulk energy we use techniques developed in
\cite{Fusco:2009ug}. The main difference is that we use directly elliptic regularity rather than dealing with
eigenvalues of compact operators. 
\begin{lemma}
\label{2ndpostive}
Suppose that a critical pair $(h,u) \in X(u_0)$ is a point of positive second variation with $0 <
h < R_ 0 $ and $||h ||_{L^2} =1$ . Then there exists $\delta>0$ such that for any admissible pair $(g,v) \in
X_{\reg}(u_0)$ with $||g ||_{L^2} =1$ and  $ ||h-g||_{C^2(\R)}  \leq \delta$ we have for
\[
g_t = \frac{h +t(g-h)}{|| h + t(g-h)||_{L^2}}\,,
\]
that
\begin{equation}
\label{2ndvarpositive}
\frac{d^2}{dt^2} \F (g_t,v_t)  \geq \frac{c_0}{2} || \langle \vect{\dot{g}_t}, \nu_t \rangle ||_{H^1(\Gamma_{g_t})}^2 
\qquad \text{for all  } \, t \in [0,1], 
\end{equation}
where $v_t$ is the elastic equilibrium associated to $g_t$. The constant $c_0$ is from Lemma \ref{easylemma}.
\end{lemma}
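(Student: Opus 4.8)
The strategy is a compactness/contradiction argument. Suppose \eqref{2ndvarpositive} fails. Then for every $j \in \N$ we can find $g_j \in C^\infty_\sharp(\R)$ with $\|g_j\|_{L^2}=1$, $\|h-g_j\|_{C^2(\R)} \leq 1/j$, and a parameter $t_j \in [0,1]$ such that, writing $(g_j)_{t_j}$ for the corresponding interpolant and $\psi_j := \dot{(g_j)}_{t_j}$ (the $s$-derivative of $((g_j)_{t_j})_s$ at $s=0$), one has
\[
\frac{d^2}{dt^2}\F\big((g_j)_{t_j}, v_{t_j}\big) < \frac{c_0}{2}\, \|\langle \vect{\psi_j},\nu_{t_j}\rangle\|_{H^1(\Gamma_{(g_j)_{t_j}})}^2 .
\]
Normalize by setting $\varphi_j := \psi_j / \|\langle \vect{\psi_j},\nu_{t_j}\rangle\|_{H^1(\Gamma_{(g_j)_{t_j}})}$, so that $\|\langle \vect{\varphi_j},\nu_{t_j}\rangle\|_{H^1}=1$ and, by homogeneity of the quadratic form \eqref{2ndFormula} in the perturbation, $\frac{d^2}{ds^2}\F\big(\cdot\big)$ evaluated with $\varphi_j$ is $<c_0/2$. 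Since $\|h - (g_j)_{t_j}\|_{C^2} \to 0$, the domains $\Gamma_{(g_j)_{t_j}}$ converge to $\Gamma_h$ in $C^2$; parametrizing everything over the fixed circle via the radial-graph description, the normals $\nu_{t_j}$, tangents $\tau_{t_j}$, curvatures $k_{t_j}$ all converge (uniformly) to $\nu,\tau,k$ of $\Gamma_h$, and the elastic equilibria $v_{t_j} \to u$ in $C^2$ up to the boundary by elliptic regularity (Lemma \ref{ellip1} applied to the difference, together with the smooth dependence of the equilibrium on the domain). After pulling back to $\R$, the functions $\langle \vect{\varphi_j},\nu_{t_j}\rangle$ are bounded in $H^1(\Gamma_{(g_j)_{t_j}})$, hence — transported to $\Gamma_h$ — bounded in $H^1(\Gamma_h)$; extract a subsequence with $\langle \vect{\varphi_j},\nu_{t_j}\rangle \rightharpoonup f$ weakly in $H^1(\Gamma_h)$ and strongly in $L^2(\Gamma_h)$, and let $\varphi \in H^1_\sharp(\R)$ be the function with $\langle \vect{\varphi},\nu\rangle = f$, exactly as in the proof of Lemma \ref{easylemma}. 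Since each $\varphi_j$ satisfies the volume-orthogonality constraint relative to $(g_j)_{t_j}$ and the weights converge, $\varphi$ satisfies \eqref{pertur} for $h$.

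The crucial analytic step is to pass to the limit in the second-variation functional. The boundary terms $\int |\partial_{\tau_t}\langle\vect{\psi_t},\nu_t\rangle|^2$ and $\int(\partial_{\nu_t}Q(E(v_t))+k_t^2)\langle\vect{\psi_t},\nu_t\rangle^2$ are handled by weak lower semicontinuity of the Dirichlet energy together with $C^0$-convergence of the coefficients $\partial_{\nu_t}Q(E(v_t))+k_t^2 \to \partial_\nu Q(E(u))+k^2$ (using $v_{t_j}\to u$ in $C^2(\bar\Omega_h)$), plus the strong $L^2$-convergence of $\langle\vect{\varphi_j},\nu_{t_j}\rangle$. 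The bulk term $-\int_{\Omega_{g_t}} 2Q(E(\dot v_t))\,dz$ is the delicate one: one must show $\int_{\Omega_{g_{t_j}}} Q(E(\dot v_{t_j}))\,dz \to \int_{\Omega_h} Q(E(u_\varphi))\,dz$, where $\dot v_{t_j}$ solves the PDE \eqref{u_psi} with data $\langle\vect{\varphi_j},\nu_{t_j}\rangle\, \C E(v_{t_j})$ on $\Gamma_{g_{t_j}}$. I would argue exactly as in Lemma \ref{easylemma}: change variables by the diffeomorphism taking $\Gamma_h$ to $\Gamma_{g_{t_j}}$ so that all $\dot v_{t_j}$ live on $\Omega_h$; use Korn's inequality on the enlarged annular domain and the bound \eqref{WeNeedThis}-type estimate to get $\|E(\dot v_{t_j})\|_{L^2}$ uniformly bounded (here one needs $\|\langle\vect{\varphi_j},\nu_{t_j}\rangle\,\C E(v_{t_j})\|_{H^1}$ bounded, which follows from $\|\langle\vect{\varphi_j},\nu_{t_j}\rangle\|_{H^1}=1$ and $v_{t_j}\to u$ in $C^2$ via Lemma \ref{fractional1}); pass to a weak limit $\dot v_{t_j}\rightharpoonup \tilde v$ in $H^1_{\loc}$; identify $\tilde v = u_\varphi$ by passing to the limit in the weak formulation, where the right-hand side converges because the coefficients converge in $C^1$ and $\langle\vect{\varphi_j},\nu_{t_j}\rangle \to f$ weakly in $H^1$, hence strongly in any $W^{s,2}$ with $s<1$, which suffices to pair against $\diver_\tau(\cdot)$; finally upgrade weak to strong convergence of $E(\dot v_{t_j})$ by testing the equation with $\dot v_{t_j}$ itself and using the compactness of the trace operator, thereby obtaining convergence of the energies $\int Q(E(\dot v_{t_j})) \to \int Q(E(u_\varphi))$.

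Combining these, we get
\[
\partial^2\F(h,u)[\varphi] \;\leq\; \liminf_{j} \frac{d^2}{ds^2}\F\big(((g_j)_{t_j})_s, \cdot\big)\big|_{s=0} \;\leq\; \frac{c_0}{2}.
\]
On the other hand, Lemma \ref{easylemma} gives $\partial^2\F(h,u)[\varphi] \geq c_0 \|\langle\vect{\varphi},\nu\rangle\|_{H^1(\Gamma_h)}^2 = c_0 \|f\|_{H^1(\Gamma_h)}^2$. If $f \not\equiv 0$ this forces $\|f\|_{H^1(\Gamma_h)}^2 \leq 1/2$; but by weak lower semicontinuity $\|f\|_{H^1(\Gamma_h)} \leq \liminf_j \|\langle\vect{\varphi_j},\nu_{t_j}\rangle\|_{H^1} = 1$ — this is not yet a contradiction, so one needs the sharper information that the \emph{full} $H^1$-norm passes to the limit, i.e. that $\langle\vect{\varphi_j},\nu_{t_j}\rangle \to f$ strongly in $H^1(\Gamma_h)$. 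This strong convergence is extracted from the inequality itself: writing out $\partial^2\F$ and using that the bulk and the $L^2$ zeroth-order terms converge, the failure inequality $\frac{d^2}{ds^2}\F < \frac{c_0}{2}\|\cdot\|_{H^1}^2$ together with Lemma \ref{easylemma}'s coercivity pins down $\int_{\Gamma}|\partial_\tau\langle\vect{\varphi_j},\nu_{t_j}\rangle|^2 \to \int_\Gamma |\partial_\tau f|^2$, hence $\|f\|_{H^1}=1$, hence $f\not\equiv 0$, and then $\partial^2\F(h,u)[\varphi] \leq \frac{c_0}{2} < c_0 = c_0\|f\|_{H^1}^2$, contradicting Lemma \ref{easylemma}. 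The remaining case $f\equiv 0$ is excluded directly: then the bulk term and the zeroth-order boundary term both vanish in the limit while $\int|\partial_\tau\langle\vect{\varphi_j},\nu_{t_j}\rangle|^2 \to 1$, so the left-hand side tends to $1 > c_0/2$ (assuming, as we may, $c_0 < 2$; otherwise rescale), again a contradiction. I expect the genuine obstacle to be the bulk term: establishing the uniform $H^1$-bound on $\dot v_{t_j}$ after transplanting to the fixed domain, and then the strong $L^2(\Gamma)$-convergence of the transplanted $\dot v_{t_j}$ needed to close the limit — this is where the enlarged-annulus Korn trick, the trace compactness, and the $C^1$-convergence of all geometric and elastic coefficients must be orchestrated carefully, and where the smoothness hypothesis $g \in C^2$ (not merely $C^{1,1}$) is used.
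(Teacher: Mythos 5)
Your overall architecture --- contradiction, normalization by $\|\langle\vect{\dot g_t},\nu_t\rangle\|_{H^1}$, and passage to the limit in the second variation with the bulk term as the crux --- is the same as the paper's, but you close the contradiction differently. The paper never extracts a weak limit of the perturbations: it transports $\dot g_n$ to the fixed domain by setting $\psi_n=\dot g_n g_n/h$ (so that $\int_0^{2\pi}h\psi_n\,d\theta=0$), proves the quantitative term-by-term comparison \eqref{point} between the perturbed form and $\partial^2\F(h,u)[\psi_n]$ using the elliptic estimates \eqref{ImpEst}, \eqref{conv_f_n}, \eqref{conv_d_n}, and then applies Lemma \ref{easylemma} to each $\psi_n$. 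You instead take a weak $H^1(\Gamma_h)$ limit $f$, use lower semicontinuity, and account for the possible loss of $H^1$-mass. Your mechanism does work: since the gradient term enters the form with coefficient $1$ and the remaining terms are compact, the failure inequality gives $c_0/2\geq\bigl(1-\|f\|_{H^1}^2\bigr)+c_0\|f\|_{H^1}^2$, which is already contradictory for $c_0\leq 1$; your intermediate step of ``pinning down'' strong $H^1$-convergence is an unnecessary detour but not an error.

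There are, however, two genuine gaps. First, you claim $v_{t_j}\to u$ in $C^2(\bar\Omega_h)$ ``by elliptic regularity'' and use this to get uniform convergence of the coefficient $\partial_{\nu_t}Q(E(v_t))$. This is not available: the domains are only $C^2$-close, so Schauder theory gives uniform $C^{1,\alpha}$ bounds on $v_t$ (the paper's \eqref{lame}) and Lemma \ref{ellip1} gives $W^{1,p}$ control of $E(v_t)$, but no uniform control of second derivatives of $v_t$ up to the boundary. The coefficient $\partial_{\nu_t}Q(E(v_t))$ involves precisely those second derivatives, and this is why the paper measures its convergence only in $H^{-1/2}(\Gamma_h)$ (estimates \eqref{H-onehalf}--\eqref{GradWE}) and pairs it against $\langle\vect{\psi_n},\nu\rangle^2$, which is bounded in $H^{1/2}$. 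Your argument must be repaired along these lines. Second, your reduction ``by homogeneity of the quadratic form \eqref{2ndFormula}'' ignores that \eqref{2ndFormula} at a non-critical $g_t$ is not a quadratic form in $\dot g_t$: it contains the terms $I_4$ and $I_5$ involving $Q(E(v_t))-k_t$ and $\vect{\ddot g_t}$. Showing that their contribution is $o\bigl(\|\langle\vect{\dot g_t},\nu_t\rangle\|_{H^1}^2\bigr)$ requires the criticality of $(h,u)$ (so that $Q(E(v_t))-k_t$ converges uniformly to the constant Lagrange multiplier), the differentiated volume constraint, and the bound $\|\langle\vect{\ddot g_t},\nu_t\rangle\|_{L^1}\leq C\|\langle\vect{\dot g_t},\nu_t\rangle\|_{L^2}^2$; this step is absent from your sketch and cannot be bypassed by a homogeneity argument.
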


\begin{proof}
Choose $\gamma>0$ such that  $ \gamma \leq h \leq R_0 - \gamma$. Suppose that the claim is not true and there
are pairs $(g_n,v_n)  \in X_{\reg}(u_0)$  and  $t_n \in [0,1]$ with
\[
|| h-g_n ||_{C^{2}(\R)} \to 0
\]
for which the claim doesn't hold. Denoting
\[
\dot{g}_n : = \frac{\partial}{\partial t}  _{\big|_{t=t_n}} \left( \frac{h +t(g_n-h)}{|| h + t(g_n-h)||_{L^2}} \right)
\quad\text{and}  \quad   \ddot{g}_n : = \frac{\partial^2}{\partial t^2}  _{\big|_{t=t_n}} \left( \frac{h +t(g_n-h)}{|| h
+ t(g_n-h)||_{L^2}} \right)
\]
this implies
\begin{equation}
\label{contrad1}
\lim_{n \to \infty}\frac{ \F''(g_n,v_n)}{|| \langle \vect{\dot{g}_n}, \nu_n \rangle ||_{H^1(\Gamma_{g_n})}^2}
\leq\frac{c_0}{2} ,
\end{equation}
where by the notation $ \F''(g_n,v_n)$ we mean
\begin{equation}
\label{2ndvar_n}
\begin{split}
\F''(g_n,v_n)  =  &- \int_{\Omega_{g_n}} 2  Q(E(\dot{v}_n))  \,  dz  + \int_{\Gamma_{g_n}} | 
\partial_{\tau_n} \langle\vect{\dot{g}_n}, \nu_n \rangle |^2 \,  d \Ha^1  \\
&- \int_{\Gamma_{g_n}}  ( \partial_{\nu_n} Q(E(v_n)) + k_n^2) \, \langle \vect{\dot{g}_n}, \nu_n \rangle^2 \,  d \Ha^1  
\\ 
& +  \int_{\Gamma_{g_n}}  (  Q(E(v_n)) - k_n) \, \partial_{\tau_n} \left( \langle  \vect{\dot{g}_n} , \nu_n \rangle
\langle  \vect{\dot{g}_n} , \tau_n \rangle \right)  \,  d \Ha^1   \\ 
&- \int_{\Gamma_{g_n}} ( Q(E(v_n)) - k_n) \, \left( \frac{\langle  \vect{\dot{g}_n} , \nu_n \rangle^2}{\langle 
\vect{g_n} , \nu_n \rangle} +  \,\langle  \vect{\ddot{g}_n} , \nu_n \rangle  \right)\, d \Ha^1\\
&= I_1 + I_2 +   I_3 + I_4 + I_5.
\end{split}
\end{equation}
Here $\nu_n$ is the outer normal to $F_{g_n}$, $\tau_n$ and $k_n$ are the tangent vector and the curvature of
$\Gamma_{g_n}$  and $\dot{v}_n$ is the unique solution to  
\begin{equation}
\label{dotv_n}
\int_{\Omega_{g_n}} \C E(\dot{v}_n):E(w) \, dz = - \int_{\Gamma_{g_n}} \diver_{\tau_n} \left( \langle \vect{\dot{g}_n}, 
\nu_n \rangle\,  \C E(v_n) \right) \cdot w  \, d \Ha^1,\qquad \forall w \in \A(\Omega_{g_t})\,.
\end{equation}
As in Proposition \ref{prop.udot} we find $C^{\infty}$-diffeomorphisms $\Psi_n: \bar{\Omega}_h \to 
\bar{\Omega}_{g_n}$ such that $\Psi_n:\Gamma_h \to \Gamma_{g_n}$ and 
\[
||\Psi_n - id ||_{C^2(\bar{\Omega}_h; \R^2) } \leq C ||h-g_n||_{C^2(\R)}.
\]
The goal is to examine the contribution of each term in \eqref{2ndvar_n} to the limit \eqref{contrad1}. We begin by
proving that the contribution of $I_4$ and $I_5$ to \eqref{contrad1} is zero.

Notice that  the $C^2$-convergence of $g_n$ implies $ k_n\circ \Psi_n \to k $ in $  L^{\infty}(\Gamma_h)$. Moreover,
since
$v_n$ solves the first two equations in \eqref{euler} and $\sup_{n} ||g_n||_{C^2([0,2\pi))} \leq C$, we have by a
Schauder type  estimate for Lamé system, see \cite{Fusco:2009ug}, that there is $\alpha \in (0,1)$ such that 
\begin{equation} 
\label{lame}
\sup_{n}||v_n||_{C^{1,\alpha}(\bar{\Omega}_n'; \R^2)} < \infty, \qquad \text{for} \,\, \Omega_n' = B_{R_0-\gamma}
\setminus F_{g_n}.
\end{equation}

Next we prove the following elliptic estimate
\begin{equation}
\label{ImpEst}
\begin{split}
|| E(u \circ \Psi_n^{-1}) - E(v_n  ) ||_{W^{1,p}(\Omega_{g_n} ; \Ma^{2 \times 2})} +&|| \nabla \C E(u \circ \Psi_n^{-1})
-  \nabla  \C E(v_n) ||_{H^{-\frac{1}{2}}(\Gamma_{g_n}; \To)} \\ 
&\leq C ||h- g_n||_{C^2 (\R)},
\end{split}
\end{equation}
where $p>2$ and $C$ depends on $\gamma$, $p$ and the $C^2$-norms of $h$ and $u$. Indeed by the equations \eqref{euler}
satisfied by $u$ and $v_n$ and a standard change of variables we obtain
\begin{equation}
\label{simpleEq}
\int_{\Omega_{g_n}} \C ( E(u \circ \Psi_n^{-1}) - E(v_n)): E(w) \, dz = \int_{\Omega_{g_n}} f_n :E(w) \,dz
,\qquad \forall w \in \A(\Omega_{g_n}),
\end{equation}
where $f_n \in C^1(\bar{\Omega}_{g_n} ; \Ma^{2 \times 2})$ satisfies
\[
||f_n||_{C^1(\bar{\Omega}_{g_n})}  \leq C ||h- g_n||_{C^2(\R)}
\]
for $C$ depending only on the $C^2$-norm of $u$. Lemma \ref{ellip1} yields the estimate
\[
\begin{split}
|| E(u \circ \Psi_n^{-1}) - E(v_n  ) &||_{W^{1,p}(\Omega_{g_n}; \Ma^{2 \times 2})} +|| \nabla \C E(u \circ \Psi_n^{-1})
-  \nabla  \C E(v_n) ||_{H^{-\frac{1}{2}}(\Gamma_{g_n}; \To)} \\ 
&\leq C  \left(||E(u \circ \Psi_n^{-1}) - E(v_n  )||_{L^2(\Omega_{g_n} ; \Ma^{2 \times 2})} + ||h- g_n||_{C^2 (\R)}
\right).
\end{split}
\]
On the other hand,  using $w=u \circ \Psi_n^{-1} - v_n$ as a test function in \eqref{simpleEq}, we obtain
\[
||E(u \circ \Psi_n^{-1}) - E(v_n  )||_{L^2(\Omega_{g_n} ; \Ma^{2 \times 2})} \leq C ||f_n||_{L^2(\Omega_{g_n}; \Ma^{2
\times 2})}.
\]
This concludes the proof of \eqref{ImpEst}.

By the trace theorem \ref{tracethm}, Lemma \ref{fractional1} and \eqref{ImpEst} we obtain
\[
\begin{split}
|| E(v_n \circ \Psi_n ) -
E(u)||_{H^{\frac{1}{2}}(\Gamma_{h}; \Ma^{2 \times 2})}
&\leq C|| E(v_n \circ \Psi_n ) - E(u)||_{H^1(\Omega_{h} ;\Ma^{2 \times 2})} \\
&\leq C ||g_n-h||_{C^2(\R)}.
\end{split}
\]
This estimate together with \eqref{lame} implies $v_n \circ \Psi_n  \to u$ in $C^{1, \alpha}$. In particular, we have
that 
\[
\bigl( Q(E(v_n)) - k_n \bigr)\circ \Psi_n  \to Q(E(u)) - k  \equiv \lambda 
\] 
uniformly, where $\lambda$ is a Lagrange multiplier.  We may use this to estimate the term $I_5$ in \eqref{2ndvar_n}. By
explicit calculations one easily obtains that $||\langle  \vect{\ddot{g}_n} , \nu_n \rangle||_{L^1} \leq C ||\langle 
\vect{\dot{g}_n} , \nu_n \rangle
|||_{L^2}^2$ and recalling that the  functions $\dot{g}_n $ and  $\ddot{g}_n $ satisfy the volume constraint, as in
Remark \ref{term-vanish}, 
\[
  \int_{\Gamma_{g_n}} \frac{\langle  \vect{\dot{g}_n} , \nu_n \rangle^2}{\langle 
\vect{g_n} , \nu_n \rangle} +  \,\langle  \vect{\ddot{g}_n} , \nu_n \rangle \, d \Ha^1 =  0
\]
we get 
\[
\begin{split}
\int_{\Gamma_{g_n}} &\bigl( Q(E(v_n)) - k_n\bigr) \, \left( \frac{\langle  \vect{\dot{g}_n} , \nu_n \rangle^2}{\langle 
\vect{g_n} , \nu_n \rangle} +  \,\langle  \vect{\ddot{g}_n} , \nu_n \rangle  \right)\, d \Ha^1 \\
&= \int_{\Gamma_{g_n}} \bigl( Q(E(v_n)) - k_n - \lambda \bigr) \, \left( \frac{\langle  \vect{\dot{g}_n} , \nu_n
\rangle^2}{\langle 
\vect{g_n} , \nu_n \rangle} +  \,\langle  \vect{\ddot{g}_n} , \nu_n \rangle  \right)\, d \Ha^1 \\
&\leq C \, ||Q(E(v_n)) - k_n - \lambda   ||_{L^{\infty}(\Gamma_{g_n})} ||\langle  \vect{\dot{g_n}} , \nu_n \rangle 
||_{L^2(\Gamma_{g_n})}^2.
\end{split}
\]
Using the polar decomposition we have $\nu_n=\frac{g_n\sigma+g_n'\sigma^\perp}{\sqrt{g_n^2+{g_n'}^2}}$ and
$\tau_n=\frac{g_n\sigma^\perp-g_n'\sigma}{\sqrt{g_n^2+{g_n'}^2}}$.
Since $\langle  \vect{\dot{g}_n} , \tau_n \rangle(z) = \frac{\langle z , \tau_n \rangle
}{\langle z , \nu_n \rangle}
\langle  \vect{\dot{g}_n} , \nu_n \rangle(z)$ and $||\frac{\langle  z , \tau_n \rangle }{\langle  z , \nu_n
\rangle}||_{H^1(\Gamma_{g_n})} \leq C$ we have as above that
\[
\begin{split}
  \int_{\Gamma_{g_n}}  &\bigl(  Q(E(v_n)) - k_n\bigr) \, \partial_{\tau_n} \left( \langle  \vect{\dot{g}_n} , \nu_n
\rangle
\langle  \vect{\dot{g}_n} , \tau_n \rangle \right)  \,  d \Ha^1 \\
&\leq  C  \, ||Q(E(v_n)) - k_n - \lambda   ||_{L^{\infty}(\Gamma_{g_n})} ||\langle  \vect{\dot{g_n}} , \nu_n \rangle 
||_{H^1(\Gamma_{g_n})}^2.
\end{split}
\]
Hence the contribution of the terms $I_4$ and $I_5$ to the limit \eqref{contrad1} is zero.

The remaining terms $I_1 $ , $I_2$  and $I_3$ form a quadratic form. The goal is to show that

\begin{equation}
\label{point}
\lim_{n \to \infty} \frac{\ds\F''(g_n,v_n)}{\ds|| \langle \vect{\dot{g}_n}, \nu_n \rangle ||_{H^1(\Gamma_{g_n})}^2} = \lim_{n \to \infty} \frac{\partial^{2}\F(h,u)[\vect{\psi_{n}}]}{\ds|| \langle \vect{\psi_n}, \nu \rangle||_{H^1(\Gamma_h)}^2}
\end{equation}
where 
\[
\psi_n = \frac{\dot{g}_n g_n}{h}
\] 
and $u_{\psi_n}$ solves 
\begin{equation}
\label{dotu}
\int_{\Omega_{h}} \C E(u_{\psi_n}):E(w) \, dz = - \int_{\Gamma_{h}} \diver_{\tau} \left( \langle   \vect{\psi_n} , \nu
\rangle\,  \C E(u) \right) \cdot w  \, d \Ha^1,\qquad \forall w \in \A(\Omega_{h})\,.
\end{equation}
Notice that $\psi_n $  satisfies the volume constraint $\int_0^{2 \pi} h \psi_n\, d \theta = 0$ then we may use Lemma
\ref{easylemma} to conclude that 
\[
 \lim_{n \to \infty}  \frac{\partial^2 \F (h,u)[\psi_n] }{|| \langle \vect{\psi_n}, \nu \rangle||_{H^1(\Gamma_h)}^2}
\geq  c_0\,,
\]
which then contradicts \eqref{contrad1} and proves the claim.

To show \eqref{point} we will compare the contribution of each term in the quadratic form  
\[
\partial^{2}\F(h,u)[\vect{\psi_{n}}] = -\int_{\Omega_{h}} 2  Q(E(u_{\psi_n}))  \,dz +\int_{\Gamma_{h}} |
\partial_{\tau} \langle 
 \vect{\psi_n} , \nu \rangle|^2 \,  d \Ha^1 -\int_{\Gamma_{h}}  ( \partial_{\nu} Q(E(u)) + k^2) \, \langle 
\vect{\psi_n} , \nu \rangle^2 \,  d \Ha^1 
\]
with respect to the one given by $I_{1},I_{2}$ and $I_{3}$ in \eqref{2ndvar_n}.

We first point out that since $g_n \to h$ in $C^2$ we have that $\nu_n \circ \Psi_n \to \nu$ and $\tau_n \circ \Psi_n \to \tau$ in $C^1(\Gamma_h) $. Therefore from the definition of $\psi_{n}$ we get
\[
\lim_{n \to \infty} \frac{|| \langle \vect{\psi_n}, \nu \rangle||_{H^1(\Gamma_h)}}{|| \langle \vect{\dot{g}_n}, \nu_n
\rangle||_{H^1(\Gamma_{g_n})}} =1
\]
and the convergence of  $I_2$,
\[
\lim_{n \to \infty} \frac{\ds\int_{\Gamma_{h}} | \partial_{\tau} \langle   \vect{\psi_n} , \nu \rangle|^2 \,  d \Ha^1
}{\ds\int_{\Gamma_{g_n}} | \partial_{\tau_n} \langle   \vect{\dot{g}_n} , \nu_n \rangle|^2 \,  d \Ha^1 } =1.
\]

The convergence of $I_1 $ follows from the equations \eqref{dotv_n}
and \eqref{dotu}. Indeed, by using a standard change of variables, these equations yield
\begin{equation}
\label{v_ndot}
\int_{\Omega_{g_n}} \left( \C E( u_{\psi_n} \circ \Psi_n^{-1})- \C E(\dot{v}_n) \right):E(w) \, dz = \int_{\Omega_{g_n}}
( \tilde{f}_n \, E(u_{\psi_n}\circ \Psi_n^{-1} )): E(w) \, dz + \int_{\Gamma_{g_n}} d_n \cdot w \, d \Ha^1
\end{equation}
for any $ w \in \A(\Omega_{g_n})$. Here 
\[
d_n = \diver_{\tau} (  \langle \vect{\psi_n},  \nu \rangle \C E(u))  \circ \Psi_n^{-1} \, |D_{\tau_n}\Psi_n^{-1}|
-\diver_{\tau_n} ( \langle \vect{\dot{g}_n},  \nu_n \rangle \C E(v_n)  )
\]
and $\tilde{f}_n \in L^2(\Omega_{g_n}; \Ma^{2 \times 2})$. For
$\tilde{f}_n$  we have 
\begin{equation}
\label{conv_f_n}
||\tilde{f}_n ||_{L^{\infty}(\Omega_{g_n}; \Ma^{2 \times 2})} \to 0.
\end{equation}
By the estimate \eqref{ImpEst}  we get
\[
|| \nabla \C E(u \circ \Psi_n^{-1})  -  \nabla  \C E(v_n) ||_{H^{-\frac{1}{2}}(\Gamma_{g_n}; \To)} \to 0.
\]
Therefore, by  Lemma \ref{fractional1}, the choice of $\Psi_n$ and from  $||\frac{\psi_n}{\dot{g}_n} - 1||_{C^1(\R)} \to
0$ we have that
\begin{equation}
\label{conv_d_n}
||d_n ||_{H^{-\frac{1}{2}}(\Gamma_{g_n} ; \R^2)}  || \langle \vect{\dot{g}_n}, \nu_n \rangle||_{H^1(\Gamma_{g_n})}^{-1}
\to 0.
\end{equation}

Choose 
\[
w(z) = (u_{\psi_n} \circ \Psi_n^{-1}- \dot{v}_n)(z) +Az + b 
\]
as a test function in \eqref{v_ndot} where $A$ is antisymmetric and $b$ is a vector. This yields
\begin{equation}
\label{looooong}
\begin{split}
\int_{\Omega_{g_n}} Q( E(u_{\psi_n} & \circ \Psi_n^{-1} - \dot{v}_n)) \, dz\\
&\leq  C || \tilde{f}_n||_{L^{\infty}(
\Omega_{g_n};
\Ma^{2 \times 2})} || E(u_{\psi_n})||_{L^{2}( \Omega_{h};
\Ma^{2 \times 2})} ||E (u_{\psi_n} \circ \Psi_n^{-1}- \dot{v}_n )||_{L^2( \Omega_{g_n}; \Ma^{2 \times 2})}\\
&\quad +|| d_n ||_{H^{-\frac{1}{2}}(\Gamma_{g_n}; \R^2) } ||w ||_{H^{\frac{1}{2}}(\Gamma_{g_n}; \R^2)}.
\end{split}
\end{equation}
By Theorem \ref{tracethm} we get that 
\[
||w||_{H^{\frac{1}{2}}(\Gamma_{g_n}; \R^2)} \leq C||w||_{H^1(\Omega_{g_n}; \Ma^{2 \times 2})}.
\]
As in the proof of Lemma \ref{easylemma} we choose $A$  such that
\[
||w||_{H^1(\Omega_{g_n}; \Ma^{2 \times 2})} \leq C ||E (w )||_{L^2( \Omega_{g_n}; \Ma^{2 \times 2})}= C  ||E (
u_{\psi_n}
\circ \Psi_n^{-1}- \dot{v}_n )||_{L^2( \Omega_{g_n}; \Ma^{2 \times 2})},
\]
by Korn's and Poincaré's inequalities (choose $b$ accordingly). The two previous inequalities and \eqref{looooong} yield
\[
||E( u_{\psi_n} \circ \Psi_n^{-1}- \dot{v}_n )||_{L^2(\Omega_{g_n}; \Ma^{2 \times 2})}\!\leq\! C\! \left(  ||
\tilde{f}_n||_{L^{\infty}( \Omega_{g_n};
\Ma^{2 \times 2})} || E(u_{\psi_n})||_{L^{2}( \Omega_{h};
\Ma^{2 \times 2})}+ ||d_n ||_{H^{-\frac{1}{2}}(\Gamma_{g_n}; \R^2) } \right).
\]
Arguing as in \eqref{WeNeedThis} we may estimate
\[
 || E(u_{\psi_n})||_{L^{2}( \Omega_{h}; \Ma^{2 \times 2})} \leq C\, || \langle \vect{\psi_n}, \nu
\rangle||_{H^1(\Gamma_{h})} .
\]
Therefore using \eqref{conv_f_n} and \eqref{conv_d_n}  we deduce that 
\[
\frac{ ||E(  u_{\psi_n} )||_{L^2(\Omega_{h}; \Ma^{2 \times 2})}^2 - ||  E(\dot{v}_n )||_{L^2(\Omega_{g_n}; \Ma^{2 \times
2})}^2 }{ || \langle \vect{\dot{g}_n}, \nu_n \rangle||_{H^1(\Gamma_{g_n})}^2} \to 0.
\] 
This proves the convergence of $I_1$.

We are left with the term $I_3$ in \eqref{2ndvar_n}. We need to show that
\[
\bigg| \int_{\Gamma_{g_n}}  ( \partial_{\nu_n} Q(E(v_n)) + k_n^2) \, \langle  \vect{\dot{g}_n} , \nu_n \rangle^2 \,  d
\Ha^1 - \int_{\Gamma_{h}}  ( \partial_{\nu} Q(E(u)) + k^2) \, \langle  \vect{\psi_n} , \nu \rangle^2 \,  d \Ha^1 \bigg|
\,  || \langle \vect{\dot{g}_n}, \nu_n \rangle||_{H^1(\Gamma_{g_n})}^{-2} \to 0.
\]
Due to the $C^2$-convergence of $g_n$ and  the $C^1$-convergence of $\frac{\psi_n}{g_n}$ we just need to show 
\begin{equation}
\label{H-onehalf}
||  \partial_{\nu_n}Q(E(v_n)) \circ \Psi_n -  \partial_{\nu}Q(E(u)) ||_{H^{-\frac{1}{2}}(\Gamma_h)} \to 0.
\end{equation}
This will be done as  \cite{Fusco:2009ug}*{Proposition 4.5}.  
For every $\varphi \in H^{\frac{1}{2}}(\Gamma_h)$ we have  
\[
\begin{split}
&\int_{\Gamma_h} \left( \frac{\partial}{\partial x_1 }  Q(E(v_n))  \circ \Psi_n -   \frac{\partial}{\partial x_1 }
Q(E(u)) \right) \varphi \, d \Ha^1 \\
&=\int_{\Gamma_h} \left(  \C E \left( \frac{\partial v_n}{\partial x_1 } \right)  \circ \Psi_n -   \C E  \left(
\frac{\partial u}{\partial x_1 } \right)   \right) : \, (E(v_n ) \circ \Psi_n) \,  \varphi \, d \Ha^1 \\
&\quad+\int_{\Gamma_h} \C E  \left( \frac{\partial u}{\partial x_1 } \right): \, (  E( v_n)  \circ \Psi_n -   E(u))   \,
 \varphi \, d \Ha^1 \\
&\leq  || (\nabla \C E(v_n)) \circ \Psi_n  - \nabla \C E(u)||_{H^{-\frac{1}{2}}(\Gamma_h; \To)} || ( E(v_n) \circ
\Psi_n) \,  \varphi ||_{H^{\frac{1}{2}}(\Gamma_h; \Ma^{2 \times 2}))} \\
&\quad + C || E(v_n) \circ \Psi_n  - E(u)||_{L^2(\Gamma_h; \Ma^{2 \times 2})} || \varphi ||_{L^2(\Gamma_h)} 
\end{split}
\]
where the constant depends on $C^2$-norms of $u$ and $h$. Fix $p >2$. By the definition of Gagliardo seminorm,
H\"older's inequality,  Theorem \ref {impedding} and  Theorem \ref{tracethm}, we obtain
\[
\begin{split}
|| &( E(v_n) \circ \Psi_n) \, \varphi ||_{H^{\frac{1}{2}}(\Gamma_h; \Ma^{2 \times 2}))} \\
&\leq C|| ( E(v_n) \circ \Psi_n)||_{L^{\infty}(\Gamma_h; \Ma^{2 \times 2}))} || \varphi ||_{H^{\frac{1}{2}}(\Gamma_h) }
+ C || ( E(v_n) \circ \Psi_n)||_{W^{\frac{p+2}{2p}, \frac{2p}{p-2}}(\Gamma_h; \Ma^{2 \times 2}))} || \varphi
||_{L^p(\Gamma_h) } \\
&\leq C \left( || ( E(v_n) \circ \Psi_n)||_{L^{\infty}(\Gamma_h; \Ma^{2 \times 2}))} + || ( E(v_n) \circ
\Psi_n)||_{W^{1, \frac{2p}{p-2}}(\Omega_h; \Ma^{2 \times 2})}  \right)\, || \varphi ||_{H^{\frac{1}{2}}(\Gamma_h) }.
\end{split}
\]

By repeating the previous argument for $\frac{\partial}{\partial x_2 }$ we obtain by \eqref{lame} and \eqref{ImpEst}
that
\begin{equation}
\label{GradWE}
\begin{split}
||\nabla &Q(E(v_n))  \circ \Psi_n -   \nabla Q(E(u))  ||_{H^{-\frac{1}{2}}(\Gamma_h; \R^2)} \\
&\leq C \left( || (\nabla \C E(v_n)) \circ \Psi_n  - \nabla \C E(u)||_{H^{-\frac{1}{2}}(\Gamma_h; \To)}  + || E(v_n)
\circ \Psi_n  - E(u)||_{L^2(\Gamma_h; \Ma^{2 \times 2})}  \right)\\
&\leq C||g_n-h||_{C^2 (\R)}.
\end{split}
\end{equation}
Since $\nu_n \circ \Psi_n \to \nu$ in $C^1$, \eqref{GradWE}
implies \eqref{H-onehalf}. This concludes the convergence of the term $I_3$ and completes the proof.
\end{proof}

\section{Local minimality}\label{Sec:5}

This section is devoted to prove the main result of the paper, the local minimality criterion. Namely, we show that if a
critical point $(h,u)\in X_{\reg}(u_0)$ has positive second variation, then it is a strict local minimizer in the
Hausdorff distance of sets and a quantitative estimate in terms of the measure of the symmetric difference between the minimum
and a competitor holds. Due to the sharp quantitative isoperimetric inequality, the exponent 2 in  \eqref{main-equation} is optimal.

\begin{theorem}\label{TH:criterion} Suppose that $(h,u) \in X_{\reg}(u_0)$ is a critical pair for $\F$ with $0 <
h  < R_{0}$. If the second variation of $\F$ is positive at $(h,u)$, then there is $\delta>0$
such that for any 
$(g,v) \in X(u_0)$ with $|\Omega_g| = |\Omega_h|$ and $0<d_{\Ha}(\Gamma_g\cup\Sigma_g, \Gamma_h) \leq \delta$
it holds that
\begin{equation}\label{main-equation}
\F(g,v) > \F(h,u) +  c\,  \abs{\Omega_{g}\Delta \Omega_{h}} ^2,
\end{equation}
for some $c >0$.
\end{theorem}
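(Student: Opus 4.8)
The plan is to bootstrap the $C^{1,1}$-local minimality of Proposition~\ref{H2-localmin} to minimality with respect to the Hausdorff distance. The obstruction is that an arbitrary competitor $(g,v)$ with $g$ merely in $BV_\sharp(\R)$ close to $h$ in Hausdorff distance need not be close to $h$ in $C^{1,1}$; indeed $g$ may have cracks ($\Sigma_g\neq\emptyset$) and may be very irregular. So the strategy is the standard one in this circle of ideas (as in \cite{Fusco:2009ug,AcFuMo,FFLmill}): argue by contradiction, take a sequence of competitors $(g_n,v_n)$ with $d_{\Ha}(\Gamma_{g_n}\cup\Sigma_{g_n},\Gamma_h)\to 0$ violating \eqref{main-equation}, replace each $g_n$ by a suitable \emph{regularized} competitor obtained by solving a penalized/obstacle version of the minimization problem \eqref{min.problem}, and then invoke the regularity theory for such obstacle problems to conclude that the regularized minimizers converge to $h$ in $C^{1,1}$, at which point Proposition~\ref{H2-localmin} applies and yields the contradiction.

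More concretely, I would fix a small $\delta$, suppose for contradiction that for every $n$ there is $(g_n,v_n)\in X(u_0)$ with $|\Omega_{g_n}|=|\Omega_h|$, $0<d_{\Ha}(\Gamma_{g_n}\cup\Sigma_{g_n},\Gamma_h)\le 1/n$ and
\[
\F(g_n,v_n)\le \F(h,u)+ \tfrac{1}{n}\,|\Omega_{g_n}\Delta\Omega_h|^2 ,
\]
and WLOG that $(g_n,v_n)$ is a minimizer of $\F$ in the class $\{(g,v):|\Omega_g|=|\Omega_h|,\ d_{\Ha}(\Gamma_g\cup\Sigma_g,\Gamma_h)\le 1/n\}$ (existence via \cite{FFLmill}*{Theorem 3.2} applied in the Hausdorff-closed subclass). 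To handle the volume constraint and the hard Hausdorff constraint I would pass to the penalized functionals
\[
\F_n(g,v):=\F(g,v)+\Lambda\big| |\Omega_g|-|\Omega_h| \big| + \Lambda\,\big(d_{\Ha}(\Gamma_g\cup\Sigma_g,\Gamma_h)-1/n\big)_+^{?}
\]
or, more in the spirit of Section~\ref{Sec:5}'s announced plan, to the obstacle-type problems obtained by confining the radial function between $h-1/n$ and $h+1/n$; minimizers of these penalized problems coincide with $(g_n,v_n)$ for $n$ large once $\Lambda$ is fixed large (comparing with $h$ itself, whose energy is $\F(h,u)$, forces the penalization terms to be controlled). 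The key external input is then the regularity theory for minimizers of these obstacle problems: a minimizer $g_n$ of the penalized functional, being almost a minimizer of a perimeter-plus-elastic functional with an obstacle of class $C^\infty$, satisfies uniform $C^{1,1}$-estimates (the Euler--Lagrange equation gives that the curvature equals $Q(E(v_n))$ up to the Lagrange multiplier and the obstacle reaction measure, all of which are bounded). Hence $\|g_n-h\|_{C^{1,1}(\R)}\to 0$ after noting that the Hausdorff convergence forces $g_n\to h$ uniformly, and the crack parts $\Sigma_{g_n}$ disappear.

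Once $g_n\to h$ in $C^{1,1}$ with $|\Omega_{g_n}|=|\Omega_h|$ (after a harmless rescaling to restore the exact $L^2$-norm constraint, which changes the energy by a higher-order term), Proposition~\ref{H2-localmin} gives, for $n$ large,
\[
\F(g_n,v_n)\ \ge\ \F(h,u)+c_1\|h-g_n\|_{L^2([0,2\pi))}^2 .
\]
It remains to convert the $L^2$-distance of the radial functions into $|\Omega_{g_n}\Delta\Omega_h|^2$: since $|\Omega_{g_n}\Delta\Omega_h|=\int_0^{2\pi}\tfrac12|g_n^2-h^2|\,d\theta\le C\int_0^{2\pi}|g_n-h|\,d\theta\le C\|g_n-h\|_{L^2}$, we get $\|h-g_n\|_{L^2}^2\ge c\,|\Omega_{g_n}\Delta\Omega_h|^2$, and also $g_n\neq h$ (since $d_{\Ha}>0$), so the inequality is strict. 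Combined with the contradiction hypothesis $\F(g_n,v_n)\le\F(h,u)+\tfrac1n|\Omega_{g_n}\Delta\Omega_h|^2$ this forces $|\Omega_{g_n}\Delta\Omega_h|=0$ for $n$ large, i.e.\ $g_n=h$, contradicting $d_{\Ha}(\Gamma_{g_n}\cup\Sigma_{g_n},\Gamma_h)>0$. The main obstacle is the middle step: establishing the uniform $C^{1,1}$-regularity of the regularized competitors, i.e.\ setting up the right penalized/obstacle problem so that its minimizers (a) agree with the original competitors for the purpose of the energy comparison and (b) enjoy the $C^{1,1}$-bounds that let us feed them into Proposition~\ref{H2-localmin}; this is where the regularity theory for obstacle problems of the type \eqref{functional-relaxed} enters and is the technical heart of Section~\ref{Sec:5}.
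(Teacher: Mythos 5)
Your overall architecture --- contradiction argument, replacement of the bad competitors by minimizers of a penalized/obstacle problem with the one-sided constraint $g\leq h+1/n$, regularity theory for these minimizers giving $C^{1,1}$-convergence to $h$, and then an appeal to Proposition \ref{H2-localmin} --- is exactly the paper's strategy and would indeed prove the \emph{qualitative} statement that $(h,u)$ is a strict local minimizer (this is the paper's Step 1). The genuine gap is in the \emph{quantitative} part, and it comes from conflating the original contradicting sequence with the regularized one. The inequality you want to contradict, $\F(g,v)\leq \F(h,u)+c\,\abs{\Omega_g\Delta\Omega_h}^2$, depends on the competitor through its own symmetric difference. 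Your ``WLOG $(g_n,v_n)$ is a minimizer of $\F$ in the Hausdorff-constrained class'' is not without loss of generality: the minimizer $(\hat g_n,\hat v_n)$ only inherits $\F(\hat g_n,\hat v_n)\leq \F(h,u)+\tfrac1n\abs{\Omega_{g_n}\Delta\Omega_h}^2$ with the \emph{old} symmetric difference on the right, and $\abs{\Omega_{\hat g_n}\Delta\Omega_h}$ may be far smaller (indeed the minimizer of that class may well be $(h,u)$ itself, since $(h,u)$ belongs to the class, in which case both the strict inequality and the condition $d_{\Ha}>0$ that you use at the end are lost). The same problem recurs in your final step: the sequence that converges in $C^{1,1}$ and to which Proposition \ref{H2-localmin} applies is the regularized one, while the hypothesis $\F\leq\F(h,u)+\tfrac1n\abs{\Omega\Delta\Omega_h}^2$ is only known for the original, possibly very irregular, competitors; so no contradiction follows as written. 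Your intermediate claim that ``minimizers of these penalized problems coincide with $(g_n,v_n)$ for $n$ large once $\Lambda$ is fixed large'' is unjustified and in general false, and cannot be used to bridge this.

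The missing device is precisely the second penalization term in the paper's auxiliary problem \eqref{eq:min_pbl}: with $\eps_n:=\abs{\Omega_{h_n}\Delta\Omega_h}$ one adds $\sqrt{(\abs{\Omega_g\Delta\Omega_h}-\eps_n)^2+\eps_n}$ to $\F+\Lambda\bigl|\,\abs{\Omega_g}-\abs{\Omega_h}\,\bigr|$. Comparing the minimizer with the original competitor and using that $(h,u)$ itself minimizes the volume-penalized functional (\cite{FFLmill}*{Proposition 6.1}) one proves $\abs{\Omega_{g_n}\Delta\Omega_h}/\eps_n\to 1$ (formula \eqref{conv-to-1}); this is what transfers the quadratic error from $(h_n,u_n)$ to the regularized sequence, so that in the end $\F(\tilde g_n,v_n)<\F(h,u)+4c_0\abs{\Omega_{\tilde g_n}\Delta\Omega_h}^2$ and Proposition \ref{H2-localmin} gives the contradiction for $c_0$ small. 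The term is shaped so that its ``reaction'' $\beta_n$ in the Euler--Lagrange equation tends to zero (this is the hypothesis $\abs{\Omega_{g_n}\Delta\Omega_h}=o(\sqrt{\eps_n})$, ensured here since $\eps_n\to0$), so it does not destroy the $C^{1,1}$-regularity scheme of Lemmas \ref{lem-balltohaus}--\ref{lem-convergence}. Two further, smaller points: the paper's obstacle is one-sided ($g\leq h+1/n$ only), and a two-sided confinement $h-1/n\leq g\leq h+1/n$ as you suggest would interfere with the exterior-ball/arc-replacement argument of Theorem \ref{thm:ext_ball}; and the restoration of the volume constraint is not a ``harmless rescaling'' but the shift $\tilde g_n=g_n+\delta_n$ with $\delta_n\geq0$ (guaranteed by Lemma \ref{volume-lemma}), whose perimeter cost is absorbed by the volume penalization as in \eqref{absorbed}.
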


The proof is based on a contradiction argument and follows some ideas contained in \cite{Fusco:2009ug}, \cite{Cicalese:2010uc} and
\cite{AcFuMo}. Assume, for a contradiction, that $(h_{n},u_{n})$  is a sequence satisfying
\[
\F(h_{n},u_{n}) \leq \F(h,u) + c_0 \,|\Omega_{h_n} \Delta \Omega_h|^2
\;\;\;\textrm{and}\;\;\; 0 < d_{\Ha}({\Gamma}_{h_{n}}\cup\Sigma_{h_{n}},\Gamma_{h})\leq \frac{1}{n}.
\]
The idea is to replace $(h_{n},u_{n})$ with the minimizer  $(g_{n},v_{n})$ of an auxiliary
constrained-penalized problem, and to prove that the $(g_{n},v_{n})$ are sufficiently regular to apply the
$C^{1,1}$-minimality criterion to get a contradiction. As auxiliary problem we choose 
\[
\min\left\{  \F(g,v)+\Lambda \big| |\Omega_{g}|-|\Omega_{h}|\big| +  \sqrt{(\abs{\Omega_{g}\Delta \Omega_{h}} - \eps_n
)^2+\eps_n } \;:\; (g,v)\in X(u_{0})\;,\;g\leq h+\frac{1}{n} \right\},
\]
where the second penalization term will provide the
quantitative estimate in \eqref{main-equation} and
the obstacle $g\leq h+1/n$ plays a key role in proving the regularity of $(g_{n},v_{n})$. 

The regularity proof is
divided in three steps. In Lemma \ref{lem-balltohaus} we prove that $g_n$ is Lipschitz using some geometrical
arguments. Then, in Lemma \ref{lem-more-regular}, we show that $g_n$ is a quasiminimizer for the area functional which
in turns implies its $C^{1,\alpha}$-regularity. Finally, we deduce the $C^{1,1}$-regularity in Lemma
\ref{lem-convergence}, by using the Euler-Lagrange equation for $(g_n,v_n)$. 

The following isoperimetric-type result will be used frequently in this section. The proof can be found in
\cite{AcFuMo}*{Lemma 4.1}.
\begin{lemma}\label{lemma-isoperimetric}
\ 
\begin{itemize}
 \item[(i)]Let $f \in C_{\sharp}^{\infty}(\R)$ be non-negative and let $g\in BV_{\sharp}(\R)$, then there exists a
constant $C$, depending only on $f$, such that 
\begin{equation*}
\Ha^1(\Gamma_g)-\Ha^1(\Gamma_f)\geq -C\abs{\Omega_g\Delta \Omega_f}\,.
\end{equation*}
\item[(ii)] Suppose $D$ is a set of finite perimeter. Then 
\[
P(D \cup B_r(x))- P( B_r(x))\geq  \frac{1}{r}\abs{D}\,,
\]
where $P$ stands for the perimeter.
\end{itemize}
\end{lemma}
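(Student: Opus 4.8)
The plan is to deduce both inequalities from a single elementary fact — the ``calibration'' form of the Gauss--Green formula for sets of finite perimeter: if $X\in C^1(\R^2;\R^2)$ (or merely Lipschitz, and $C^1$ off one point with $\diver X\in L^1_{\loc}$) satisfies $\abs X\le 1$, then for every finite--perimeter set $E\subseteq\R^2$
\[
P(E)=\Ha^1(\partial^*E)\ \ge\ \int_{\partial^*E}\langle X,\nu_E\rangle\,d\Ha^1\ =\ \int_E\diver X\,dz .
\]
I would use, for (i), a cut--off of the gradient of the signed distance to the smooth curve $\Gamma_f$, and for (ii) the radial field $y\mapsto(y-x)/\abs{y-x}$.

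For \textbf{part (i)} I would first observe that, $f$ being smooth (and $f=h>0$ in the applications, so $\Gamma_f=\partial F_f$ is a genuine smooth closed curve), the signed distance $d_f$ to $\Gamma_f$ is $C^2$ near $\Gamma_f$; set $\nu=\nabla d_f$, so that $\abs\nu\le1$ on a tubular neighbourhood $U$ of $\Gamma_f$ and $\nu$ coincides with the outer normal of $F_f$ on $\Gamma_f$. Choosing $\chi\in C^\infty_c(U)$ with $\chi\equiv1$ near $\Gamma_f$ and $X=\chi\nu$ produces a global field with $\abs X\le1$, $X=\nu$ on $\Gamma_f$, and $\abs{\diver X}\le C(f)$ on $\R^2$. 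Since $\partial^*F_g\subseteq\Gamma_g$ (the filament set $\Sigma_g$ consists of density--zero boundary points, by the subgraph description of starshaped finite--perimeter sets recalled in Section~\ref{Preliminaries}), the calibration bound applied to $E=F_g$ gives $\Ha^1(\Gamma_g)\ge P(F_g)\ge\int_{F_g}\diver X\,dz$, while $X\cdot\nu_{F_f}\equiv1$ on $\Gamma_f$ yields $\Ha^1(\Gamma_f)=P(F_f)=\int_{F_f}\diver X\,dz$. Subtracting and using $\abs{\diver X}\le C(f)$,
\[
\Ha^1(\Gamma_g)-\Ha^1(\Gamma_f)\ \ge\ \int_{F_g\setminus F_f}\diver X\,dz-\int_{F_f\setminus F_g}\diver X\,dz\ \ge\ -C(f)\,\abs{F_g\Delta F_f}=-C(f)\,\abs{\Omega_g\Delta\Omega_f},
\]
since $\Omega_g\Delta\Omega_f=F_g\Delta F_f$; this is exactly (i).

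For \textbf{part (ii)}, replacing $D$ by $D\setminus B_r(x)$ leaves the left--hand side unchanged, so I may assume $D\cap B_r(x)=\emptyset$ up to a null set. With $X(y)=(y-x)/\abs{y-x}$ one has $\abs X=1$ and $\diver X=\abs{y-x}^{-1}$ away from $x$; the singularity at $x$ is integrable in $\R^2$, so excising $B_\delta(x)$ and letting $\delta\to0$ (the boundary term on $\partial B_\delta(x)$ being $O(\delta)$) the calibration bound applied to $E=D\cup B_r(x)$ reads
\[
P(D\cup B_r(x))\ \ge\ \int_{D\cup B_r(x)}\frac{dy}{\abs{y-x}}\ =\ \int_{B_r(x)}\frac{dy}{\abs{y-x}}+\int_D\frac{dy}{\abs{y-x}}\ =\ P(B_r(x))+\int_D\frac{dy}{\abs{y-x}},
\]
because $\int_{B_r(x)}\abs{y-x}^{-1}\,dy=2\pi r=P(B_r(x))$. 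This is the sharp form $P(D\cup B_r(x))-P(B_r(x))\ge\int_D\abs{y-x}^{-1}\,dy$; since in the regularity argument the lemma is applied only with $D$ localised in a ball of radius comparable to $r$ about $x$, on $D$ one has $\abs{y-x}^{-1}\ge c/r$, whence the stated estimate $P(D\cup B_r(x))-P(B_r(x))\ge\frac1r\abs D$ after normalising the constant.

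The argument is entirely soft: the substance is the one--line Gauss--Green calibration, so there is no genuinely hard step. The only points requiring care are the identification of $\partial^*F_g$ with (a subset of) $\Gamma_g$ and of $\Ha^1(\Gamma_f)$ with $P(F_f)$ from the starshaped subgraph description for (i), and, for (ii), the treatment of the integrable singularity of the radial field at $x$ together with the bookkeeping of the location of $D$ relative to $B_r(x)$ needed to pass from $\int_D\abs{y-x}^{-1}\,dy$ to $\frac1r\abs D$.
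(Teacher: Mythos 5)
The paper does not actually prove this lemma (it defers entirely to \cite{AcFuMo}*{Lemma 4.1}), so the only thing to assess is your argument on its own terms. Your part (i) is fine in substance: the cut-off signed-distance calibration, together with $P(F_g)\le\Ha^1(\Gamma_g)$ and $\Ha^1(\Gamma_f)=P(F_f)=\int_{F_f}\diver X\,dz$, gives exactly the claimed estimate, and this is the standard route. Two caveats: your construction needs $\Gamma_f$ to be a smooth compact curve, hence $f>0$; the lemma as stated allows non-negative $f$ with zeros, where $\Gamma_f$ can be singular at the origin and the tubular-neighbourhood/signed-distance argument degenerates (harmless for the application, where $f=h>0$, but it is a restriction of the statement).

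Part (ii) has a genuine gap. Your calibration yields $P(D\cup B_r(x))-P(B_r(x))\ge\int_{D\setminus B_r(x)}\abs{y-x}^{-1}\,dy$, but on $D\setminus B_r(x)$ one has $\abs{y-x}\ge r$, so this integral is \emph{at most} $\tfrac1r\abs{D\setminus B_r(x)}$: the bound you obtain is weaker than the claim, not a ``sharp form'' of it, and the final step goes in the wrong direction. The proposed repair --- assuming $D$ lies within distance $Cr$ of $x$ and ``normalising the constant'' --- is not available: the lemma asserts the constant $1/r$, and the proof of Theorem \ref{thm:ext_ball} uses precisely this constant against the choice $r<1/(\Lambda+1)$; moreover in that application $D$ is the region enclosed by the circular arc $f$ and the arc $\tilde f\subset\Gamma_g\cup\Sigma_g$, which may extend far from $z_0$ (e.g.\ a deep dent of $\Gamma_g$ reaching toward the origin), so no $O(r)$-localisation is guaranteed; finally, replacing $D$ by $D\setminus B_r(x)$ also silently weakens the right-hand side from $\abs D$ to $\abs{D\setminus B_r(x)}$. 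In fact no purely soft argument can close this: as literally written the inequality is false --- for $D=B_{2r}(x)\setminus B_r(x)$ one gets $P(D\cup B_r(x))-P(B_r(x))=2\pi r$ while $\tfrac1r\abs D=3\pi r$ --- so the statement must be read as shorthand for the precise lemma in \cite{AcFuMo}, whose additional hypotheses on how $D$ sits relative to $B_r(x)$ have to enter any proof. As it stands, your part (ii) does not establish the inequality in the form in which the paper uses it, and the concluding remark that ``there is no genuinely hard step'' is accurate only for part (i).
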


We will also need the following property of concave functions.
\begin{lemma}
\label{conc-conv}
Suppose that $f_n \in C^1(\R)$  and $f \in C^1(\R)$ are such that $||f_n - f||_{L^{\infty}(\R)} \to 0$. If the $f_n$ are
concave then
\[
||f_n - f||_{C_{loc}^{1}(\R)} \to 0.
\] 
\end{lemma}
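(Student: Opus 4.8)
The plan is to use the concavity of the $f_n$ --- which forces the derivatives $f_n'$ to be non-increasing --- first to deduce pointwise convergence $f_n'\to f'$, and then to upgrade this to locally uniform convergence by a Dini-type argument that exploits the monotonicity of $f_n'$ together with the continuity of $f'$.

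First I would record the elementary two-sided bound coming from concavity: for every $x\in\R$ and every $h>0$,
\[
\frac{f_n(x+h)-f_n(x)}{h}\ \le\ f_n'(x)\ \le\ \frac{f_n(x)-f_n(x-h)}{h}.
\]
Passing to the limit in $n$ and using $\norm{f_n-f}_{L^\infty(\R)}\to 0$ gives, for each fixed $h>0$,
\[
\frac{f(x+h)-f(x)}{h}\ \le\ \liminf_n f_n'(x)\ \le\ \limsup_n f_n'(x)\ \le\ \frac{f(x)-f(x-h)}{h}.
\]
Now letting $h\to 0^+$ and using that $f$ is differentiable at $x$, both the outer quotients converge to $f'(x)$, so $\liminf_n f_n'(x)=\limsup_n f_n'(x)=f'(x)$; that is, $f_n'\to f'$ pointwise on $\R$.

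It remains to turn pointwise convergence into locally uniform convergence. Fix a compact interval $[a,b]$ and $\eps>0$. Since $f\in C^1(\R)$, the derivative $f'$ is uniformly continuous on $[a,b]$, so I would choose a partition $a=x_0<x_1<\dots<x_N=b$ with $|f'(x)-f'(x_i)|<\eps$ whenever $x\in[x_i,x_{i+1}]$. By the pointwise convergence just established there is $n_0$ such that $|f_n'(x_i)-f'(x_i)|<\eps$ for all $i$ and all $n\ge n_0$. Because each $f_n'$ is non-increasing, for $x\in[x_i,x_{i+1}]$ we have $f_n'(x_{i+1})\le f_n'(x)\le f_n'(x_i)$, and combining this sandwich with the two previous estimates yields $|f_n'(x)-f'(x)|\le 2\eps$ for all $n\ge n_0$. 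Hence $\norm{f_n'-f'}_{L^\infty([a,b])}\to 0$, which together with the hypothesis $\norm{f_n-f}_{L^\infty(\R)}\to 0$ gives $\norm{f_n-f}_{C^1([a,b])}\to 0$, proving the claim.

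The statement presents no genuine obstacle; the only point requiring care is that the promotion from pointwise to locally uniform convergence cannot be obtained from equicontinuity (the $f_n'$ carry no \emph{a priori} modulus of continuity) but must instead rely on the monotonicity of $f_n'$ and the continuity of the limit, i.e.\ on the P\'olya/Dini-type mechanism used in the last paragraph.
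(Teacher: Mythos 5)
Your proof is correct, and both arguments rest on the same elementary fact (the one-sided bounds $\tfrac{f_n(x+h)-f_n(x)}{h}\le f_n'(x)\le\tfrac{f_n(x)-f_n(x-h)}{h}$ coming from concavity), but the routes to \emph{uniform} convergence differ. You first extract pointwise convergence $f_n'\to f'$ by sending $n\to\infty$ and then $h\to0^+$, and afterwards upgrade to local uniformity by a P\'olya/Dini-type partition argument that exploits the monotonicity of each $f_n'$ together with the uniform continuity of $f'$. The paper collapses these two steps into one: since $f\in C^1$, the difference quotient satisfies $f(x+\delta)-f(x)\ge f'(x)\delta-\eps\delta$ with a single $\delta$ valid for all $|x|\le R$ (uniform differentiability on compacts), so the concavity inequality combined with $\norm{f_n-f}_{L^\infty}\to0$ yields $f'(x)-f_n'(x)\le 2\eps$ uniformly in $x$ directly, with the reverse inequality obtained symmetrically using the increment $-\delta$. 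Your closing remark that the promotion to uniform convergence ``must'' rely on the monotonicity of $f_n'$ is therefore slightly overstated --- the paper's one-step argument never invokes the partition or the monotone sandwich, only the uniform modulus of differentiability of the limit --- but this does not affect the validity of what you wrote; both proofs are complete.
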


\begin{proof}
Let $R >0$ and fix $\eps>0$ . Since $f \in C^1(\R)$ we find $\delta>0$ such that 
\[
f(\delta+x) - f(x) \geq f'(x) \delta - \eps \delta  
\] 
for every $|x| \leq R$. On the other hand, since the $f_n$ are concave, we have
\[
\frac{f_n(\delta+x) - f_n(x)}{\delta} \leq f_n'(x) .
\]
Hence 
\[
f'(x) - f_n'(x) \leq  \frac{f(\delta+x) - f_n(\delta+x)- (f(x)-  f_n(x))}{\delta} + \eps \leq 2 \eps,
\]
when $n$ is large. The reverse inequality  $f_n'(x) - f'(x) \leq 2 \eps$  follows from a similar argument. 
\end{proof}

We begin the study of the properties of solutions of the auxiliary problem by  proving an exterior ball condition.
\begin{theorem}\label{thm:ext_ball}
Let  $h \in C^{\infty}_{\sharp}(\R)$ such that $0 < h < R_{0}$. Then for every
$c, \eps \in [0,1]$ and $n \in \N$  every solution of the problem
\begin{equation}\label{eq:min_pbl}
 \min \left\{\F(g,v)+\Lambda\bigl| \abs{\Omega_g}- \abs{\Omega_h} \bigr| + c \sqrt{(\abs{\Omega_g \Delta  \Omega_h} -
\eps )^2+\eps }: (g,v)   \in
X(u_{0}),\, g\leq
h + \frac{1}{n}\right\}\,,
\end{equation}
satisfies the following \emph{uniform exterior ball condition}:
for every $z \in \partial F_g$ and for every
$r < \min\{1/(\Lambda+1),1/\norm{\kappa_h}_\infty\}$, 
where $\kappa_h$ is the curvature of $\Gamma_h$, 
there exists $z_0$ such that  $B_r(z_0) \subset \R^2 \setminus F_g$ and $\partial B_r(z_0)\cap
\left(
\Gamma_g\cup\Sigma_g \right)=\{z\}$.
\end{theorem}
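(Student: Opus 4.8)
The argument is by contradiction. I fix a minimiser $(g,v)$ of \eqref{eq:min_pbl}, a point $z\in\partial F_g$ and a radius $r<\min\{1/(\Lambda+1),1/\norm{\kappa_h}_\infty\}$, and I assume that no ball $B_r(z_0)\subset\R^2\setminus F_g$ with $z\in\partial B_r(z_0)$ meets $\Gamma_g\cup\Sigma_g$ only at $z$. The plan is then to produce an admissible competitor $(\tilde g,\tilde v)\in X(u_0)$ with $\tilde g\le h+1/n$ whose total energy in \eqref{eq:min_pbl} is strictly smaller, contradicting minimality.

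The first step is to peel off the obstacle-contact case. Writing $\theta_z:=\sigma^{-1}(z/|z|)$, if $g(\theta_z)=h(\theta_z)+1/n$ then $F_g\subseteq F_{h+1/n}$ and $z\in\partial F_{h+1/n}$, and the smoothness of $h$ together with $r<1/\norm{\kappa_h}_\infty$ produces an honest exterior ball $B_r(z_0)\subset\R^2\setminus F_{h+1/n}\subseteq\R^2\setminus F_g$ touching $\overline{F_{h+1/n}}$, hence $\Gamma_g\cup\Sigma_g$, only at $z$ — already a contradiction. Thus I may assume $g(\theta_z)<h(\theta_z)+1/n$; by upper semicontinuity of $g$ and continuity of $h$ there is then a uniform slack $g\le h+1/n-\delta_0$ on a neighbourhood of $\theta_z$, so that $F_g$ can be enlarged there without hitting the obstacle.

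The competitor fills the ``pinch'' of the complement at $z$. Failure of the exterior $r$-ball at $z$ means that the void $F_g$ has at $z$ a concave indentation (a bay) too narrow to admit an $r$-ball on its convex side; the same degeneration occurs, even more drastically, at crack points $z\in\Sigma_g$, where the complement has zero width. In either case I would enlarge $F_g$ by filling this region — over a set of diameter comparable to $r$ — by setting $\tilde g:=\max\{g,\varphi\}$ for a suitable radial cap $\varphi\le h+1/n$ supported near $\theta_z$ (trimming the tip above the cap when needed), so that $\tilde g\in BV_\sharp(\R)$, $\tilde g\le h+1/n$, and $F_{\tilde g}\supseteq F_g$. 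I take $\tilde v:=v|_{\Omega_{\tilde g}}$, which is admissible since $\R^2\setminus F_{\tilde g}\subseteq\R^2\setminus F_g$; because $\Omega_{\tilde g}\subseteq\Omega_g$ and $Q\ge 0$, the bulk term does not increase. The two penalty terms are $\Lambda$- and $c$-Lipschitz ($c\le 1$) in $|\Omega_g|$ and $|\Omega_g\Delta\Omega_h|$, and both quantities change by at most $|F_{\tilde g}\setminus F_g|=O(r^2)$, so the penalties grow by at most $(\Lambda+1)\,O(r^2)$. Finally the surface energy strictly decreases: replacing a concave indentation by a shorter capping arc lowers $\Ha^1(\Gamma_g)+2\Ha^1(\Sigma_g)$, and any portion of the crack set $\Sigma_g$ absorbed in the process is saved with weight $2$; the drop is bounded below by a fixed multiple of $r$, a quantitative form of ``a concave pinch has more boundary than the arc that caps it'' which I would obtain from the relative isoperimetric estimate in Lemma \ref{lemma-isoperimetric}(ii). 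Since $r<1/(\Lambda+1)$, this linear gain dominates the quadratic loss, the desired contradiction. That the exterior ball can be taken to touch $\Gamma_g\cup\Sigma_g$ in exactly $\{z\}$, rather than merely to exist, then follows from a routine perturbation of its centre, using the strict inequality in the radius bound.

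The main obstacle is the geometric construction of the competitor: one must keep $\tilde g$ star-shaped and below the obstacle simultaneously — which is precisely why the modification is realised as a $\max$ with a radial cap rather than a genuine union with a Euclidean ball, and why the contact case $g(\theta_z)=h(\theta_z)+1/n$ has to be separated first — and one must extract from the sole hypothesis ``no exterior $r$-ball at $z$'' a lower bound on the surface-energy drop that is uniform in $n$, $c$ and $\eps$ and strong enough to beat $(\Lambda+1)$ times the area of the modification. The crack set $\Sigma_g$ requires a little extra bookkeeping but, being counted twice, only works in our favour.
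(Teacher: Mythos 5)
Your overall strategy (enlarge the void to fill a pinch, pay at most $(\Lambda+1)$ times the area change in the penalization terms via the Lipschitz estimate for $\sqrt{(\cdot-\eps)^2+\eps}$, gain in surface energy, use $r<1/(\Lambda+1)$) is the right one and matches the spirit of the paper's proof, but the quantitative heart of your argument is wrong. You claim that failure of the exterior $r$-ball condition at $z$ forces a pinch whose filling lowers $\Ha^1(\Gamma_g)+2\Ha^1(\Sigma_g)$ by a fixed multiple of $r$, while the area changes by $O(r^2)$, so that ``linear beats quadratic''. Neither bound holds: the exterior ball can fail at $z$ because of an arbitrarily small, high-curvature dimple (e.g.\ $g$ smooth with a tiny concave dent of radius of curvature slightly below $r$), and then any local filling changes both length and area by amounts that are as small as you like — certainly not $\gtrsim r$. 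What saves the argument is not a lower bound on the drop in terms of $r$ but a lower bound on the \emph{ratio} drop/area: this is exactly Lemma \ref{lemma-isoperimetric}(ii), $P(D\cup B_r(z_0))-P(B_r(z_0))\geq \abs{D}/r$, and it applies only when the region $D$ you fill is capped by an arc of a ball $B_r(z_0)$ \emph{contained in} $\R^2\setminus F_g$. Your cap is an arbitrary radial function $\varphi$, to which Lemma \ref{lemma-isoperimetric}(ii) does not apply, so the estimate you invoke does not give what you use. In the paper the competitor is built precisely from such a ball: if some $B_r(z_0)\subset\R^2\setminus F_g$ has its inner half-circle meeting $\Gamma_g\cup\Sigma_g$ in two points $z_1,z_2$, one replaces $g$ on $(\theta_1,\theta_2)$ by the circular arc $f$ (which stays below $h+1/n$ because $r<1/\norm{\kappa_h}_\infty$), and then the gain $\Ha^1(\tilde f)-\Ha^1(f)\geq\abs{D}/r>(\Lambda+1)\abs{D}$ beats the penalties no matter how small $D$ is, thanks to \eqref{eq:trick_vol}.

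There is a second, structural gap: your contradiction is run at the fixed point $z$, and you never show that failure of the exterior ball at $z$ actually produces a ball in the complement bounding a fillable region near $z$ — that implication is false as stated (the failure may be caused by geometry with no such ball nearby touching twice in a useful way, e.g.\ at crack tips, or at points where $g$ touches the obstacle but $z$ sits strictly inside a crack, a case your dichotomy on $g(\theta_z)=h(\theta_z)+1/n$ does not cover). The paper's logic is the reverse: the filling competitor shows that \emph{no} ball contained in $\R^2\setminus F_g$ can have its inner half-circle meet $\Gamma_g\cup\Sigma_g$ in two distinct points, and only then does one deduce the pointwise exterior ball property at every $z\in\partial F_g$ by the separate geometric sliding argument of \cite{chambolle}*{Lemma 2} or \cite{Fonseca:2007bj}*{Proposition 3.3, Step 2}. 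That last step is genuinely nontrivial and cannot be replaced by the ``routine perturbation of the centre'' you appeal to; as it stands, your proof is missing both the correct competitor (circular cap inside a complement ball) and the passage from the two-point exclusion to the exterior ball condition at every boundary point.
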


\begin{figure}[ht]
 \caption{}\label{fig:ext_ball}
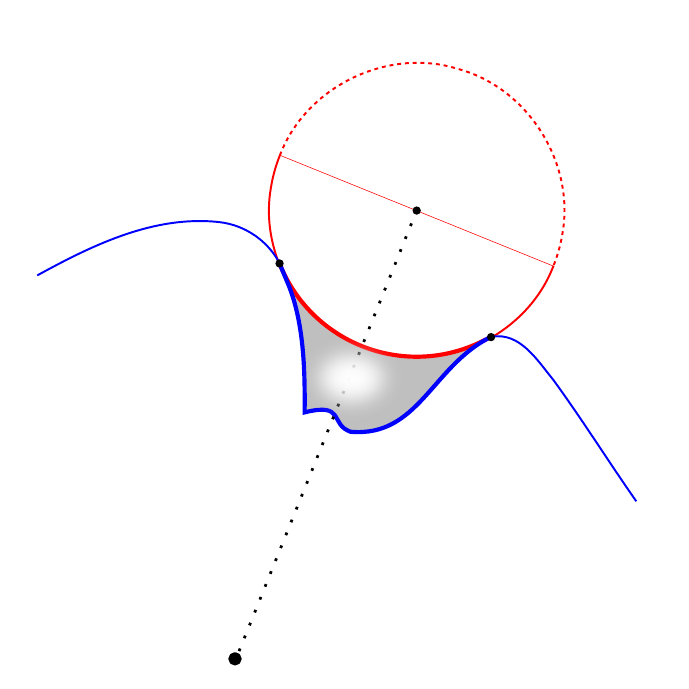
\end{figure}

\begin{proof}
The proof follows the argument from \cite{Fusco:2009ug}*{Lemma 6.7}. Recall that $\partial F_g = \Gamma_g\cup\Sigma_g$. 
Given a ball $B_r(z_0)$ define the half circle $S_r(z_0)=\partial B_r(z_0)\cap\{z\in\R^2:\langle
z-z_0,z_0\rangle <0\}.$  Assume that there exists a ball $B_r(z_0)\subset\R^2\setminus F_g$ such that $S_r(z_0)$
intersects $\Gamma_g\cup \Sigma_g$ in two different points $z_1=(\rho_1,\theta_1)$ and
$z_2=(\rho_2,\theta_2)$. When $r < 1/ \norm{\kappa_h}_\infty$  it is clear that the arc $f=f(\theta)$ of $S_r(z_0)$
connecting $z_1$ and $z_2$ satisfies $f(\theta)\leq h(\theta)+\frac{1}{n}$ for $\theta\in(\theta_1,\theta_2)$. Let $\tilde{g}$ be 
defined by $\tilde{g}=f $ for $\theta\in(\theta_1,\theta_2)$ and $\tilde{g}=g$ otherwise. Denote by $\tilde{f}$
the arc of $\Gamma_g\cup\Sigma_g$ connecting $z_1$ and $z_2$ and by $D$ the region enclosed by $f\cup\tilde{f}$, see
Figure \ref{fig:ext_ball}.

Notice that 
\begin{equation}\label{eq:trick_vol}
 \begin{split}
\sqrt{(\abs{\Omega_{\tilde{g}}\Delta \Omega_h}-\eps)^2+\eps} &-\sqrt{(\abs{\Omega_{{g}}\Delta \Omega_h}-\eps)^2+\eps}
\\ 
&\qquad=\frac{(\abs{\Omega_{\tilde{g}}\Delta \Omega_h}-\eps)^2-(\abs{\Omega_{{g}}\Delta \Omega_h}-\eps)^2}
{\sqrt{(\abs{\Omega_{\tilde{g}}\Delta \Omega_h}-\eps)^2+\eps}+\sqrt{(\abs{\Omega_{{g}}\Delta \Omega_h}-\eps)^2+\eps}}
\\ &\qquad\leq
\frac{(\abs{\Omega_{\tilde{g}}\Delta \Omega_h}+\abs{\Omega_g\Delta \Omega_h}-2\eps)(\abs{\Omega_{\tilde{g}}\Delta
\Omega_h}
-\abs{\Omega_g\Delta \Omega_h})}
{\bigl| \abs{\Omega_{\tilde{g}}\Delta \Omega_h}-\eps\bigr|+ \bigl| \abs{\Omega_{{g}}\Delta \Omega_h}-\eps\bigr|}\\
&\qquad\leq \abs{\Omega_{\tilde{g}}\Delta \Omega_g}\,.
 \end{split}
\end{equation}
Since $\Omega_{\tilde{g}}\Delta \Omega_g=D$ and $\Omega_{\tilde{g}}\subset\Omega_g$ we see that
\begin{equation}\label{eq:ext_ball}\begin{split}
  &\F(\tilde{g},v)+\Lambda \bigl|\abs{\Omega_{\tilde{g}}}-|\Omega_h|\bigr|+c
  \sqrt{(\abs{\Omega_{\tilde{g}}\Delta \Omega_h}-\eps)^2+\eps}
  \\
&\leq\F({g},v) + \Lambda \bigl|\abs{\Omega_{g}} - |\Omega_h|\bigr|+c
  \sqrt{(\abs{\Omega_{{g}}\Delta \Omega_h}-\eps)^2+\eps}+ \Ha^1(f)-\Ha^1(\tilde{f})+(\Lambda+1)\abs{D}\,.
  \end{split}\end{equation}
Moreover from  Lemma \ref{lemma-isoperimetric} we infer that 
\[
 \Ha^1(f)-\Ha^1(\tilde{f})\leq P(B_r(z_0)) -P(D \cup B_r(z_0))  \leq -\frac{1}{r}\abs{D}\,.
\]
Hence, since $r<1/(\Lambda+1)$,  the inequality \eqref{eq:ext_ball} contradicts the minimality of $(g,v)$. The
conclusion now follows arguing as \cite{chambolle}*{Lemma 2} or \cite{Fonseca:2007bj}*{Proposition 3.3, Step 2}.
\end{proof}

\begin{lemma}\label{volume-lemma}
Let $h, c, \eps$ and $n$ be as in the previous theorem. Suppose $(g,v)\in X(u_{0})$ is any minimizer of 
\eqref{eq:min_pbl}. Then there exists $\Lambda_{0}>0$, independent of $c, \eps$ and $n$, such that if $\Lambda\geq
\Lambda_{0}$ then $|\Omega_{g}|\geq |\Omega_{h}|$.
\end{lemma}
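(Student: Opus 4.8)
The plan is to argue by contradiction. There is nothing to prove unless $|\Omega_g| < |\Omega_h|$, so assume this; then the void is strictly larger, and set $t_0 := |F_g| - |F_h| = |\Omega_h| - |\Omega_g| > 0$. The idea is to shrink the void slightly so as to strictly lower the penalization $\Lambda\bigl||\Omega_g| - |\Omega_h|\bigr|$, while all the other terms of the energy change only by $O(t_0)$ with constants independent of $\Lambda$; this will contradict minimality once $\Lambda$ is large. Denote by $J$ the functional minimized in \eqref{eq:min_pbl} and by $u_h$ the elastic equilibrium associated to $h$. Testing $J$ with the admissible pair $(h,u_h)$ and using $|\Omega_h\Delta\Omega_h| = 0$ together with $c,\eps\in[0,1]$, the value of \eqref{eq:min_pbl} is at most $M_0 := \F(h,u_h) + \sqrt2$, which is independent of $c,\eps,n$. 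Hence at a minimizer $(g,v)$ one has $\int_{\Omega_g} Q(E(v))\,dz \le M_0$ and $\Lambda\bigl||\Omega_g| - |\Omega_h|\bigr| \le M_0$; moreover $v$ is the elastic equilibrium of $g$, and arguing as in the proof of Lemma \ref{easylemma} (Korn's inequality on $B_{2R_0}\setminus F_g$ with the constraint $v\equiv u_0$ on $B_{2R_0}\setminus B_{R_0}$) one gets $\int_{\Omega_g}|Dv|^2\,dz \le M_1$, with $M_1$ depending only on $h$, $R_0$, $u_0$. In particular $0 < t_0 \le M_0/\Lambda$, so $t_0$ will be as small as we please once $\Lambda$ is large.

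To build the competitor, set $g_\lambda := \lambda g$, with $\lambda\in(0,1)$ defined by $\lambda^2 |F_g| = |F_h|$ (possible since $t_0 > 0$); then $|F_{g_\lambda}| = |F_h|$, $1 - \lambda \le t_0/(2|F_h|)$ and $|F_g\setminus F_{g_\lambda}| = (1-\lambda^2)|F_g| = t_0$. Note that $g_\lambda$ still lies in $BV_\sharp(\R)$ and satisfies $g_\lambda \le g \le h + 1/n$. Let $v_\lambda := v\circ\Phi_\lambda$, where $\Phi_\lambda$ is a bi-Lipschitz homeomorphism of $\R^2$ that equals the identity outside $B_{R_0}$, maps $F_{g_\lambda}$ onto $F_g$, and satisfies $\norm{D\Phi_\lambda - \id}_{L^\infty}\le C(1-\lambda)$. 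Then $(g_\lambda, v_\lambda) \in X(u_0)$ is admissible for \eqref{eq:min_pbl}, and $v_\lambda\equiv u_0$ outside $B_{R_0}$.

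Next I estimate the change $J(g_\lambda,v_\lambda) - J(g,v)$ term by term. (i) Since the homothety $z\mapsto\lambda z$ multiplies $\Ha^1$ on $\Gamma_g$ and on $\Sigma_g$ by $\lambda$, the surface part obeys $\Ha^1(\Gamma_{g_\lambda}) + 2\Ha^1(\Sigma_{g_\lambda}) = \lambda\bigl(\Ha^1(\Gamma_g) + 2\Ha^1(\Sigma_g)\bigr)$, so it does not increase. (ii) Changing variables through $\Phi_\lambda$ and using $\norm{D\Phi_\lambda - \id}_{L^\infty}\le C(1-\lambda)$ together with the bounds on $\int_{\Omega_g} Q(E(v))$ and $\int_{\Omega_g}|Dv|^2$, one gets $\int_{\Omega_{g_\lambda}} Q(E(v_\lambda))\,dz \le \int_{\Omega_g} Q(E(v))\,dz + C(1-\lambda)(M_0 + M_1) \le \int_{\Omega_g} Q(E(v))\,dz + C t_0$. (iii) By the choice of $\lambda$, $|\Omega_{g_\lambda}| = |\Omega_h|$, so the penalization term drops from $\Lambda t_0$ to $0$. (iv) Since $\Omega_g\subset\Omega_{g_\lambda}$ with $|\Omega_{g_\lambda}\setminus\Omega_g| = |F_g\setminus F_{g_\lambda}| = t_0$ and $x\mapsto\sqrt{(x-\eps)^2 + \eps}$ is $1$-Lipschitz, the last term changes by at most $c\,t_0 \le t_0$.

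Collecting these, $J(g_\lambda,v_\lambda) - J(g,v) \le C t_0 + t_0 - \Lambda t_0 = t_0(C + 1 - \Lambda)$, with $C$ depending only on $h$, $R_0$, $u_0$. Hence, with $\Lambda_0 := C + 2$, we get $J(g_\lambda,v_\lambda) < J(g,v)$ whenever $\Lambda\ge\Lambda_0$, contradicting the minimality of $(g,v)$; therefore $|\Omega_g|\ge|\Omega_h|$, and $\Lambda_0$ is manifestly independent of $c,\eps,n$. I expect the only genuinely delicate point to be (ii): producing $\Phi_\lambda$ with distortion $O(1-\lambda)$ while keeping it equal to the identity outside $B_{R_0}$ (so that $v_\lambda\equiv u_0$ there) is not immediate because $\partial F_g = \Gamma_g\cup\Sigma_g$ is only $BV$ and $g$ could a priori reach $R_0$; it is here that one uses the uniform exterior ball condition of Theorem \ref{thm:ext_ball} (with radius comparable to $1/\Lambda$) to get enough geometric control on $F_g$, together with the interior elliptic regularity of the elastic equilibrium $v$ away from $\Gamma_g$, to keep the added elastic energy a $\Lambda$-independent multiple of $t_0$. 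Steps (i), (iii), (iv) are elementary.
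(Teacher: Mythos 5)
Your overall strategy is the right one and matches the paper's: shrink the void by the homothety ratio $\lambda$ with $\lambda^2|F_g|=|F_h|$, observe that the surface energy scales by $\lambda$, that the penalization drops by $\Lambda t_0$, and that the square-root term moves by at most $t_0$ (your step (iv) is exactly the paper's inequality \eqref{eq:trick_vol}). The problem is step (ii), which you yourself flag as delicate: the map $\Phi_\lambda$ you need does not obviously exist. A homeomorphism of $\R^2$ that is the identity outside $B_{R_0}$ and carries $F_{g_\lambda}$ onto $F_g$ must carry $\Omega_{g_\lambda}$ onto $\Omega_g$; the natural candidate is the ray-by-ray affine reparametrization of $[\lambda g(\theta),R_0]$ onto $[g(\theta),R_0]$, but its angular derivative involves $g'$, and $g$ is only in $BV_\sharp(\R)$ (with possible cracks $\Sigma_g$), so this map is not Lipschitz, let alone with distortion $O(1-\lambda)$. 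Your proposed rescue via the exterior ball condition of Theorem \ref{thm:ext_ball} does not close the gap for two reasons: first, the exterior ball condition is one-sided and does not by itself make $\Gamma_g\cup\Sigma_g$ a Lipschitz graph (that only comes later, in Lemma \ref{lem-balltohaus}, after one already knows $g_n\to h$ uniformly); second, and fatally for your bookkeeping, the ball radius is of order $1/\Lambda$, so any constant extracted from it depends on $\Lambda$, and your final step ``choose $\Lambda_0:=C+2$'' requires $C$ to be independent of $\Lambda$.

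The paper avoids constructing any such $\Phi_\lambda$. It transports $v$ by the \emph{pure} homothety: $v_r(z):=v(z/r)$ for $|z|<rR_0$, which maps $F_{g_r}$ onto $F_g$ exactly (no boundary regularity needed) and, by the two-dimensional scaling invariance $\int Q(E(v(\cdot/r)))\,dz=\int Q(E(v))\,dz$, adds \emph{no} elastic energy there; in the leftover annulus $rR_0\le|z|\le R_0$ it glues in the radially constant extension $v_r(z)=u_0\bigl(zR_0/|z|\bigr)$ of the smooth boundary datum, whose energy is bounded by $C(1-r)\le C\,t_0$ with $C$ depending only on $u_0$ and $R_0$. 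The two pieces agree on $|z|=rR_0$ and the outer piece equals $u_0$ on $\partial B_{R_0}$, so the competitor is admissible. If you replace your $\Phi_\lambda$-construction by this gluing, the rest of your argument goes through verbatim and your $\Lambda_0$ is indeed independent of $c$, $\eps$ and $n$.
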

\begin{proof}
We argue by contradiction supposing that $|\Omega_{g}|<  |\Omega_{h}|$ for every $\Lambda >0$. We observe that there
exists $0<r<1$ such that,
if we define
$\Omega^r_g=B_{R_{0}}\setminus r F_g$, we have $|\Omega_{g}^{r}|=  |\Omega_{h}|$. Moreover, since
\[
|\Omega^{r}_{g}|=\pi R_{0}^{2}-\frac{r^{2}}{2} \int_{0}^{2\pi}g^{2}\,d\theta,
\]
we get
\[
r=\left( \frac{\pi R^{2}_{0}- |\Omega_{h}|}{\pi R_{0}^{2}-|\Omega_{g}|} \right)^{\frac{1}{2}} < 1.
\]
Clearly $\Omega_{g}^{r}=\Omega_{g_{r}}$ for $g_{r}(\theta)=r g(\theta)$. Define the function $v_{r}: \Omega_{g_{r}}\to
\R^{2}$ as
\[
v_{r}(z)=\begin{cases}
u_{0}\left(\frac{z}{|z|}R_{0}\right) & \textrm{if } r R_{0}\leq |z| \leq R_{0} \\
v\left( \frac{z}{r}\right) & \textrm{if } g_{r}\left( \frac{z}{|z|}\right) \leq |z| < r R_{0}.
\end{cases}
\]
Since $\Omega_{g_r}\supset \Omega_g$, we see that $\abs{\Omega_{g_r}\Delta\Omega_g}= |\Omega_{h}|-\abs{\Omega_g}$.
Using the inequality  \eqref{eq:trick_vol} we have, for $\Lambda$ sufficiently large, that
\[
\begin{split}
&\F(g_{r},v_{r})  +\Lambda \big| |\Omega_{g_{r}}|- |\Omega_{h}| \big| 
+ c \sqrt{( \abs{ \Omega_{g_r}\Delta \Omega_{h} } - \eps )^2+\eps } \\
&- \F(g,v) - \Lambda  \big| |\Omega_{g}| - |\Omega_{h}|  \big| 
- c \sqrt{( \abs{ \Omega_{g}\Delta \Omega_{h} } - \eps )^2+\eps }\\
	& \qquad\qquad\leq \int_{rR_{0}\leq |z|\leq R_{0}} Q(E(v_{r})) \, dz- \Lambda \left(  |\Omega_{h}|- |\Omega_{g}|
\right)
+c\abs{\Omega_{g_r}\Delta\Omega_g}\\
	&\qquad\qquad \leq C(1-r) - (\Lambda-1) \left( |\Omega_{h}|- |\Omega_{g}| \right)\\
	&\qquad\qquad \leq C(\abs{\Omega_h}-\abs{\Omega_g}) - (\Lambda-1) \left( |\Omega_{h}|- |\Omega_{g}| \right) < 0\,,
\end{split}
\]
which  contradicts the minimality of $(g,v)$. 
\end{proof}

In the following we study convergence properties of solutions for the constrained obstacle problem \eqref{eq:min_pbl}.
\begin{lemma}
\label{lem-balltohaus}
Let  $h$ be as in Theorem \ref{thm:ext_ball}. Assume
$g_{n}\in BV_{\sharp}(\R)$ is such that $g_{n}\leq h+ 1/n$ and it satisfies the uniform
exterior ball condition. If 
\begin{equation}\label{eq:len2len}
g_{n}\to h \text{ in } L^{1} \text{ and }
\lim_{n\to\infty}\Ha^1(\Gamma_{g_{n}}\cup\Sigma_{g_n})=\Ha^{1}(\Gamma_{h})\,,
\end{equation}
then $g_{n}\to h$ in $L^{\infty}$. Moreover, for $n$ sufficiently large,  the $g_{n}$ are uniformly Lipschitz
continuous.
\end{lemma}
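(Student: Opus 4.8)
\emph{Step 1: $L^\infty$ convergence.} The obstacle constraint $g_n\le h+1/n$ gives $\sup_\R(g_n-h)\to 0$ for free, so only $\sup_\R(h-g_n)\to 0$ has to be shown. Suppose it fails: after passing to a subsequence there are $\eps_0>0$ and $\theta_n$ with $h(\theta_n)-g_n(\theta_n)\ge\eps_0$. The point $p_n:=g_n(\theta_n)\sigma(\theta_n)$ lies on $\partial F_{g_n}$, and since $\Gamma_h$ is a fixed smooth (in particular Lipschitz) curve and $p_n$ sits radially a distance $\ge\eps_0$ below it, there is a constant $c=c(h)>0$ with $B_{c\eps_0}(p_n)\subset\operatorname{int}F_h$. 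On the other hand the uniform exterior ball condition gives a ball $B_{r_0}(z_n)\subset\R^2\setminus F_{g_n}$ with $p_n\in\partial B_{r_0}(z_n)$. The lens $B_{r_0}(z_n)\cap B_{c\eps_0}(p_n)$ is contained in $F_h\setminus F_{g_n}$ and has measure bounded below by a positive constant depending only on $r_0,\eps_0,h$ (because $p_n$ lies exactly on the circle $\partial B_{r_0}(z_n)$, whose curvature is $1/r_0$); this contradicts $|F_h\Delta F_{g_n}|\le C\|g_n-h\|_{L^1}\to 0$. Hence $\|g_n-h\|_{L^\infty}\to 0$; in particular $d_\Ha(F_{g_n},F_h)\to 0$ and $\gamma/2\le g_n\le R_0-\gamma'/2$ for $n$ large, where $\gamma=\min h$, $\gamma'=R_0-\max h$.

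\emph{Step 2: uniform transversality of the exterior balls.} I claim there is $c_0>0$ such that for all large $n$, every radius-$r_0$ exterior ball touching $\partial F_{g_n}$ at a point $z$, with $\nu:=(z_0-z)/r_0$ its outward unit normal, satisfies $\langle z/|z|,\nu\rangle\ge c_0$. If not, along a subsequence such contact points $z_n\to z_\infty$ and normals $\nu_n\to\nu_\infty$ with $\langle z_\infty/|z_\infty|,\nu_\infty\rangle<c_0$. Using Step 1 one checks $z_\infty\in\Gamma_h$: whether $z_n$ lies on $\Gamma_{g_n}$ or on a crack of $\Sigma_{g_n}$, its radial distance is squeezed between $g_n^+(\theta_{z_n})$ and $g_n(\theta_{z_n})$, both $\to h$ at the limiting angle. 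Passing to the limit, $B_{r_0}(z_\infty+r_0\nu_\infty)$ is a radius-$r_0$ exterior ball to the \emph{smooth} set $F_h$ at $z_\infty$, which forces $\nu_\infty=\nu_h(z_\infty)=\frac{h\sigma+h'\sigma^\perp}{\sqrt{h^2+h'^2}}$; but then $\langle z_\infty/|z_\infty|,\nu_\infty\rangle=h/\sqrt{h^2+(h')^2}\ge\gamma/\sqrt{\gamma^2+\|h'\|_\infty^2}>0$, so choosing $c_0$ below this threshold gives a contradiction.

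\emph{Step 3: semiconcavity and conclusion.} Fix $n$ large and $\bar\theta\in\R$, and let $z=g_n(\bar\theta)\sigma(\bar\theta)\in\partial F_{g_n}$ with exterior ball $B_{r_0}(z_0)$. The tangency clause of the exterior ball property yields $\overline{B_{r_0}(z_0)}\cap F_{g_n}=\{z\}$. By Step 2 the circle $\partial B_{r_0}(z_0)$ is uniformly transverse to radial rays near $z$, hence near $\bar\theta$ it is the graph $\rho=\phi(\theta)$ of a function with $\|\phi\|_{C^2(J)}\le C$, where $J$ is an interval of a fixed length $2\delta_0$ and $C,\delta_0$ depend only on $r_0,\gamma,R_0$. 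A crossing argument — using the upper semicontinuity of $g_n$ and that $g_n(\theta)\sigma(\theta)$ avoids $\overline{B_{r_0}(z_0)}$ for $\theta\neq\bar\theta$ — shows $g_n\le\phi$ on $J$ with equality at $\bar\theta$, \emph{unless} the radial graph ``jumps over'' the ball inside $J$, which would force a crack of $\Ha^1$-length $\ge c_0r_0$ at some angle in $J$; this is ruled out because the perimeter hypothesis implies $\Ha^1(\Sigma_{g_n})\to 0$ (indeed $\Ha^1(\Gamma_{g_n})=P(F_{g_n})\ge P(F_h)-o(1)$ by lower semicontinuity of the perimeter, while $\Ha^1(\Gamma_{g_n})+\Ha^1(\Sigma_{g_n})\to\Ha^1(\Gamma_h)=P(F_h)$). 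Thus at every point $\bar\theta$ the function $g_n$ admits a superdifferential with a uniform quadratic remainder, i.e.\ $g_n$ is semiconcave with a constant independent of $n$. A semiconcave function is continuous, so $\Sigma_{g_n}=\emptyset$; and a semiconcave, $2\pi$-periodic function bounded by $R_0$ is Lipschitz, with constant $\le R_0/\delta_0+C\delta_0$, depending only on $r_0,\gamma,R_0,\|h'\|_\infty$. This gives the uniform Lipschitz bound.

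\emph{Main difficulty.} Step 1 is elementary plane geometry. The crux is Steps 2--3: for a \emph{fixed} $n$ the exterior ball condition only forbids outward corners and does not bound the slope of $g_n$ at all (the exterior balls can be almost tangent to radial rays where $g_n$ is steep), so the Lipschitz bound has to be produced in the limit. The compactness argument of Step 2 is what converts closeness to the smooth profile $h$ into a \emph{quantitative} transversality valid for \emph{all} exterior balls simultaneously; this then powers the ``barrier from above'' mechanism of Step 3, and the sole role of the perimeter convergence is to exclude the one surviving bad configuration — a steep near-jump or crack tucked behind one of the exterior balls.
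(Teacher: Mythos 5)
Your proof is correct in substance, but it takes a genuinely different route from the paper's. For the $L^\infty$ step the paper does not use the exterior ball condition at all: it extracts a Hausdorff limit $K$ of the connected sets $\Gamma_{g_n}\cup\Sigma_{g_n}$, shows $\Gamma_h\subset K$, and concludes $K=\Gamma_h$ via Go\l\k{a}b's semicontinuity theorem and a density lower bound; your lens/area argument instead trades the perimeter hypothesis for the exterior ball condition plus the $L^1$ convergence, which is a sound and arguably more elementary alternative (modulo the routine caveat that the radius of the interior ball around $p_n$ must also be capped by a constant times $\min_\R h$). For the Lipschitz bound the paper works directly in the $(\theta,\rho)$-plane: if a difference quotient of $g_n$ exceeded a fixed multiple of the $C^1$-norm of $h$, the exterior ball at that point would force $g_n$ to lie below $h$ by more than $\sup_\R|h-g_n|$ at a nearby angle, a contradiction once the $L^\infty$ distance is small; your Steps 2--3 replace this quantitative estimate by a compactness argument (uniform transversality of all contact normals, using the smoothness of $\Gamma_h$) followed by a barrier/semiconcavity argument in polar coordinates --- essentially the mechanism the paper itself uses later, in Lemma \ref{lem-more-regular}, to pass from the exterior ball condition to uniform semiconcavity. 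Both routes work; the paper's is shorter and explicit about the constant, yours additionally yields $\Sigma_{g_n}=\emptyset$. One justification in your Step 3 is off, though harmlessly: the ``jump over the ball'' scenario cannot occur at all, since $F_{g_n}$ is a radial subgraph, so $g_n(\theta)\ge\psi(\theta)$ would put the whole segment $\{\rho\sigma(\theta)\,:\,0\le\rho\le g_n(\theta)\}\subset F_{g_n}$ across the open exterior ball; consequently the appeal to $\Ha^1(\Sigma_{g_n})\to 0$ (and hence to the perimeter hypothesis) is unnecessary at that point, and a hypothetical jump would anyway contribute to $\Gamma_{g_n}$ rather than produce a crack in $\Sigma_{g_n}$, so the crack-length mechanism you invoke is not the right reason --- but the fact you need, $g_n\le\phi$ on $J$, holds trivially.
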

\begin{proof}Here we follow an argument from \cite{Fusco:2009ug}*{Theorem 6.9, Steps 1 and 2}.
We claim that
\[
 \sup_{\R}\,\abs{g_n-h}\to 0 \text{\ as\ }n\to +\infty\,.
\]
Let us first note that $\Gamma_{g_n}\cup\Sigma_{g_n}$ is a connected compact set. Up to a
subsequence, we can assume that $\Gamma_{g_n}\cup\Sigma_{g_n}$ converges in the Hausdorff distance to
some compact connected set $K$. The continuity of $h$ and condition \eqref{eq:len2len} imply that $\Gamma_h\subset K$.
By Go\l\k{a}b's semicontinuity Theorem (see, e.g.\ \cite{ambrosio-tilli}*{Theorem 4.4.17}) and assumption
\eqref{eq:len2len} we see that
\[
 \Ha^1(\Gamma_h)\leq\Ha^1(K)\leq \lim_{n\to +\infty}\Ha^1(\Gamma_{g_n}\cup\Sigma_{g_n})=\Ha^1(\Gamma_h)\,.
\]
This implies that $\Ha^1(K\setminus\Gamma_h)=0$. Since $K$ is connected, it follows from a density lower bound  (see,
e.g.  \cite{ambrosio-tilli}*{Lemma 4.4.5}) that $K=\Gamma_h$. Now the claim follows from the definition of the Hausdorff
metric and from the continuity of $h$.

From the previous claim we see that, for $n$ sufficiently large, it holds
$\gamma\leq g_n\leq R_0 -\gamma$ for some $\gamma>0$ small, independent from $n$. Hence, since
the polar coordinates mapping is a $C^\infty$-local diffeomorphism far from the origin,
the representation in polar coordinates of
$F_{g_n}$ (still denoted by $F_{g_n}$) satisfies the uniform exterior ball condition up to changing the radius $r$ to
$\tilde{r} \in (0,1)$ by a factor depending only on $\gamma$.  Now we prove that $g_n$ are $L$-Lipschitz with $L \leq
\frac{8}{\tilde{r}} \, ||h||_{C^1(\R)}$.

We argue by contradiction and assume that there exists $\theta$ and $\theta_k \to \theta$ such that
\[
\lim_{k \to \infty}  \frac{|g_n(\theta_k) - g_n(\theta)|}{|\theta_k - \theta|} \geq  \frac{8}{\tilde{r}} \,
||h||_{C^1(\R)}
\]
and set $z=(\theta,g_n(\theta))$. Without loss of generality we may assume that the sequence $\{\theta_k\}_k\in\N$ is monotone and $g_n(\theta_k)$ is increasing. By the uniform
exterior ball condition we find a ball $B_{\tilde{r}}(z_0)\subset\R^2\setminus F_{g_n}$ such that  $\partial
B_{\tilde{r}}(z_0)\cap
(\Gamma_{g_n}\cup\Sigma_{g_n})=\{z\}$ and 
\[
z_0 =  z+\tilde{r} \left(\frac{M}{\sqrt{1+M^2}},\frac{1}{\sqrt{1+M^2}}\right), \quad \text{for} \quad M \geq
\frac{4}{\tilde{r}} \, ||h||_{C^1(\R)}
\]
Let  $z' \in \partial B_{\tilde{r}}(z_0)$ such that
\[
 z' = z_0 - \tilde{r}\left(\frac{\sqrt{ M^2 -3}}{\sqrt{1+M^2}},\frac{ 2}{\sqrt{1+M^2}}\right) .
\]
We write $z' =:z+ \tilde{r} \, (w_1,w_2)$ with 
\[
w_1 =   \frac{M- \sqrt{ M^2 -3}}{\sqrt{1+M^2}}  > 0 \quad \text{and} \quad  w_2 = \frac{-1}{\sqrt{1+M^2}} <0
\]
and since $B_{\tilde{r}}(z_0)\subset\R^2\setminus F_{g_n}$ we have $g_n(\theta+\tilde{r} w_1)\leq g_n(\theta)+\tilde{r}
w_2$. Setting $\delta_n=\sup_\R \abs{h-g_n}$ and recalling $ ||h||_{C^1(\R)} \leq M/ 4$ we get
\[
 h(\theta+\tilde{r}  w_1)\geq h( \theta)-  \frac{ \tilde{r} M}{4} w_1 \geq g_n(\theta)- \delta_n - \frac{ \tilde{r}
M}{4} w_1\,.
\]
Therefore we deduce 
\[
\begin{split}
 h(\theta+\tilde{r} w_1)-g_n(\theta+ \tilde{r} w_1) &\geq - \delta_n - \tilde{r} \left( \frac{ M}{4}  w_1- w_2 \right)
\\ 
&= - \delta_n + \frac{\tilde{r}}{\sqrt{1+M^2}} \left( 1 - \frac{M}{4} \left(M- \sqrt{ M^2 -3}\right) \right) \\
&=  - \delta_n + \frac{\tilde{r}}{\sqrt{1+M^2}} \left( 1 - \frac{3M}{4(M + \sqrt{ M^2 -3})}\right) >  \delta_n 
\end{split}
\]
where the last inequality, which holds for $n$ sufficiently large, gives a contradiction.
\end{proof}

In the next lemma we show the $C^{1, \alpha}$-regularity of the minimizer for the penalized obstacle problem. 

\begin{lemma}\label{lem-more-regular}
Let $h$ be as in Theorem \ref{thm:ext_ball} and $(g_{n},v_{n})\in X(u_{0})$ be any minimizer of the problem
\begin{equation}
\label{panalized.funct}
\min\left\{  \F(g,v)+\Lambda \big| |\Omega_{g}|-|\Omega_{h}|\big| + c \sqrt{(\abs{\Omega_{g}\Delta \Omega_{h}} -
\eps_n )^2+\eps_n } \;:\; (g,v)\in X(u_{0})\;,\;g\leq h+\frac{1}{n} \right\},
\end{equation}
where  $c \in [0,1]$ and $\eps_n \to 0$. Assume also that $g_{n}\to h$ in $L^{1}$ and that
\[
\lim_{n\to\infty}\Ha^{1}({\Gamma}_{g_{n}}\cup \Sigma_{g_{n}})=\Ha^{1}(\Gamma_{h}) \;\; \textrm{ and } \;\;
\sup_{n}\int_{\Omega_{g_{n}}}Q(E(v_{n}))dz < +\infty.
\]
Then for all $\alpha \in \left(0,\frac{1}{2}\right)$ and for $n$ large enough $g_{n}\in C^{1,\alpha}(\R)$, the
sequence $\{\nabla v_{n}\}$ is equibounded in $C^{0,\alpha}(\overline{\Omega}_{g_{n}};\mathbb{M}^{2\times 2})$, and
$g_{n}\to h$
in $C^{1,\alpha}(\R)$.
\end{lemma}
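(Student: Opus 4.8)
The plan is to reduce the statement to the regularity theory for $\Lambda$-minimizers of the perimeter, after first extracting enough compactness from the results already available. Since $(g_n,v_n)$ minimizes \eqref{panalized.funct}, which is an instance of \eqref{eq:min_pbl}, Theorem \ref{thm:ext_ball} gives that each $F_{g_n}$ satisfies the uniform exterior ball condition with radius $r_0=\tfrac12\min\{1/(\Lambda+1),1/\norm{\kappa_h}_\infty\}$, a quantity independent of $n$. Combined with the hypotheses $g_n\to h$ in $L^1$ and $\Ha^1(\Gamma_{g_n}\cup\Sigma_{g_n})\to\Ha^1(\Gamma_h)$, Lemma \ref{lem-balltohaus} then yields $g_n\to h$ in $L^\infty$ and that the $g_n$ are uniformly Lipschitz for $n$ large; in particular $\Sigma_{g_n}=\emptyset$, $\Gamma_{g_n}=\partial F_{g_n}$, and $\gamma\le g_n\le R_0-\gamma$ for some fixed $\gamma>0$.

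Next I would upgrade this to $C^1$-convergence. The uniform exterior ball condition means that, in suitable local coordinates, $\Gamma_{g_n}$ is the graph of a function differing from a fixed paraboloid by a concave function, with semiconcavity constant controlled by $1/r_0$ uniformly in $n$; since $g_n\to h$ locally uniformly in these coordinates and $h$ is smooth, Lemma \ref{conc-conv} gives $g_n\to h$ in $C^1(\R)$ (using $2\pi$-periodicity to pass from $C^1_{\mathrm{loc}}$ to $C^1$).

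The main point is then to show that $F_{g_n}$ is, uniformly in $n$, a $\Lambda$-minimizer of the perimeter in $B_{R_0}$ with the smooth obstacle $h+1/n$, namely $\Ha^1(\Gamma_{g_n}\cap B_r(z))\le\Ha^1(\Gamma_{\tilde g}\cap B_r(z))+C\abs{\Omega_{g_n}\Delta\Omega_{\tilde g}}$ for every competitor $\tilde g$ with $\{\tilde g\neq g_n\}\subset\subset$ a small arc, $\tilde g\le h+1/n$, and $r$ small. This comes from testing the minimality of $(g_n,v_n)$ against suitable pairs $(\tilde g,\tilde v)$: for competitors that enlarge the cavity one restricts $v_n$, so the bulk energy does not increase, while the volume term and the $\sqrt{\,\cdot\,}$-penalization are absorbed into $\abs{\Omega_{g_n}\Delta\Omega_{\tilde g}}$ exactly as in \eqref{eq:trick_vol} and Lemma \ref{lemma-isoperimetric}; for competitors that shrink the cavity one extends $v_n$ across the Lipschitz graph $\Gamma_{g_n}$ into the thin collar, controlling the extension via Korn's inequality (Theorem \ref{kornann}) and the bound $\sup_n\int_{\Omega_{g_n}}Q(E(v_n))<\infty$, as in \cite{Fusco:2009ug}. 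I expect this last estimate to be the main obstacle: since $E(v_n)$ is only controlled in $L^2$, the bulk energy in a collar of width $\varepsilon$ is merely $o(1)$ rather than $O(\varepsilon^2)$, so a blow-up argument exploiting the exterior ball condition, following \cite{Fusco:2009ug}*{Theorem 6.9}, is needed to still obtain the density estimates.

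Granting the $\Lambda$-minimality, the regularity theory for almost-minimizers of the perimeter with a $C^\infty$ obstacle gives $g_n\in C^{1,\alpha}(\R)$ for every $\alpha\in(0,\tfrac12)$, with $\norm{g_n}_{C^{1,\alpha}(\R)}\le C$ uniformly in $n$. Since $v_n$ is the elastic equilibrium in $\Omega_{g_n}$, whose boundary $\Gamma_{g_n}$ is now uniformly $C^{1,\alpha}$, and $v_n\equiv u_0$ is smooth near $\partial B_{R_0}$, the Schauder-type estimate for the Lamé system used for \eqref{lame} shows that $\{\nabla v_n\}$ is equibounded in $C^{0,\alpha}(\overline{\Omega}_{g_n};\Ma^{2\times2})$. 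Finally, fixing $\alpha\in(0,\tfrac12)$ and choosing $\alpha'\in(\alpha,\tfrac12)$, the uniform $C^{1,\alpha'}$-bound together with the $C^1$-convergence from Lemma \ref{conc-conv} (interpolation, or Arzelà–Ascoli with uniqueness of the limit $h$) yields $g_n\to h$ in $C^{1,\alpha}(\R)$, completing the proof.
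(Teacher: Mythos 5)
Your overall architecture (quasiminimality of the perimeter plus Tamanini-type regularity, semiconcavity for the $C^1$-convergence, elliptic estimates for $\nabla v_n$) is the right one, but the order in which you run the steps hides a circularity that the paper's proof is specifically designed to break. First, you derive $g_n\to h$ in $C^1$ from the uniform exterior ball condition and Lemma \ref{conc-conv} \emph{before} any regularity of $g_n$ beyond Lipschitz is known. Lemma \ref{conc-conv} requires $f_n\in C^1$, and the exterior ball condition only gives uniform semiconcavity: a semiconcave Lipschitz function may still have downward corners (think of $-\abs{\theta}$, which is concave, satisfies the exterior ball condition from above, and is not $C^1$). In the paper the semiconcavity argument is applied only \emph{after} each $g_n$ has been shown to be $C^{1,\alpha_n}$ for some $n$-dependent exponent, and it is precisely that preliminary, non-uniform regularity which legitimizes the use of Lemma \ref{conc-conv}.

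Second, the uniform $\Lambda$-minimality you aim for cannot be obtained in one pass. As you yourself note, the bulk energy of the extended $v_n$ in $B_r(z_0)$ is not $O(r^2)$ from the $L^2$-bound alone; but the blow-up argument you invoke to repair this (\cite{Fusco:2009ug}) itself requires the $C^1$-convergence of the domains, which by the previous point requires some regularity of $g_n$, which requires some form of quasiminimality --- the circle closes. The paper breaks it with a two-pass scheme: (i) since $\Gamma_{g_n}$ is Lipschitz, elliptic regularity for the Lam\'e system in Lipschitz domains (\cite{Fonseca:2007bj}, Theorem 3.13) gives a \emph{non-uniform} decay $\int_{B_r(z_0)\cap\Omega_{g_n}}\abs{\nabla v_n}^2\leq c_n r^{1+2\alpha_n}$, see \eqref{est-ext}; this yields the $n$-dependent quasiminimality \eqref{Tamanin} (with the obstacle handled by the chord competitor and the $Cr^2$ correction \eqref{half-2-Tamanin}) and hence $g_n\in C^{1,\alpha_n}$ for each $n$; (ii) only then do semiconcavity and Lemma \ref{conc-conv} give $g_n\to h$ in $C^1$, which feeds the blow-up to produce the \emph{uniform} decay \eqref{est-unif}, and a second application of the quasiminimality argument produces the uniform $C^{1,\alpha}$-bounds for all $\alpha\in(0,1/2)$. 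Your proposal should be restructured along these lines; the remaining steps (compactness for the $C^{1,\alpha}$-convergence, Schauder estimates for $\nabla v_n$) match the paper.
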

\begin{proof}
From Lemma \ref{lem-balltohaus} we infer that $g_{n} $ is sufficiently regular to ensure a decay estimate for $\nabla
v_{n}$. Indeed, for $z_{0}\in \Gamma_{g_{n}}$ there exist $c_{n}>0$, a radius $r_{n}$ and an exponent $\alpha_{n}\in
(0,1/2)$ such that
\[
\int_{B_{r}(z_{0})\cap \Omega_{g_{n}}} |\nabla v_{n}|^{2}\leq c_{n}r^{1+2\alpha_{n}},
\]
for every $r < r_n$. This follows from the fact that $v_{n}$ minimizes  the elastic energy in
$\Omega_{g_{n}}$ and the boundary $\Gamma_{g_n}$ is Lipschitz, see Theorem 3.13 in
\cite{Fonseca:2007bj}.

Since $g_{n}$ is Lipschitz, we may extend $v_{n}$ in $B_{r}(z_{0})$ such that
\begin{equation}\label{est-ext}
\int_{B_{r}(z_{0})} |\nabla \tilde{v}_{n}|^{2}\leq c_{n}r^{1+2\alpha_{n}},
\end{equation}
where $\tilde{v}_{n}$ stands for the extension.

For $r<r_{n}$, denote by $z'_{r}$ and $z''_{r}$ the two points on $\Gamma_{g_{n}}\cap \partial B_{r}(z_{0})$ such that
the
open sub-arcs of $\Gamma_{g_{n}}$ with end points $z'_{r}$, $z_{0}$ and $z''_{r}$, $z_{0}$ are contained in
$\Gamma_{g_{n}}\cap \partial B_{r}(z_{0})$. Setting $z'_{r}=g_{n}(\theta'_{r})\sigma(\theta'_{r})$ and
$z''_{r}=g_{n}(\theta''_{r})\sigma(\theta''_{r})$, denote by $l$ the line segment joining $z'_{r}$ and
$z''_{r}$ and define
\[
\tilde{g}_{n}(\theta) :=
\begin{cases}
g_{n}(\theta) & \theta \in [0, 2\pi)\setminus (\theta'_{r},\theta''_{r}) \\
\min\{ h(\theta)+\frac{1}{n} , l(\theta)\} & \theta \in (\theta'_{r},\theta''_{r}),
\end{cases}
\]
where $l(\theta)$ is the polar representation of $l$.

By \eqref{est-ext} and by the minimality of the pair $(g_{n},v_{n})$ we have
\begin{equation}\label{half-1-Tamanin}
\Ha^{1}(\Gamma_{g_{n}}\cap B_{r}(z_{0}))-\Ha^{1}(\Gamma_{\tilde{g}_{n}}\cap B_{r}(z_{0})) \leq C_{n}r^{1+2\alpha_{n}}.
\end{equation}
Indeed we can estimate
\[
\begin{split}
0  \geq & \, \F(g_{n},v_{n}) - \F(\tilde{g}_{n},\tilde{v}_{n}) +\Lambda\left(  \big| |\Omega_{g_{n}}|-|\Omega_{h}|\big|
-  \big| |\Omega_{\tilde{g}_{n}}|-|\Omega_{h}|\big|\right) \\
& + c \left( \sqrt{(\abs{\Omega_{g_{n}}\Delta \Omega_{h}} - \eps_n )^2+\eps_n }  - 
\sqrt{(\abs{\Omega_{\tilde{g}_{n}}\Delta \Omega_{h}} - \eps_n )^2+\eps_n }  \right)\\
 \geq & \, \Ha^{1}(\Gamma_{g_{n}}\cap B_{r}(z_{0}))-\Ha^{1}(\Gamma_{\tilde{g}_{n}}\cap B_{r}(z_{0}))  -
\int_{B_{r}(z_{0})}Q(E(\tilde{v}_{n}))dz - (\Lambda+1)\pi r^{2} \\
 \geq & \, \Ha^{1}(\Gamma_{g_{n}}\cap B_{r}(z_{0}))-\Ha^{1}(\Gamma_{\tilde{g}_{n}}\cap B_{r}(z_{0})) -
C_{n}r^{1+2\alpha_{n}}
\end{split}
\]

We will show later that
\begin{equation}
\label{half-2-Tamanin}
\Ha^{1}(\Gamma_{\tilde{g}_{n}}\cap B_{r}(z_{0}))-\Ha^{1}({l}) \leq C r^{2}.
\end{equation}
Now the inequality \eqref{half-2-Tamanin} together with \eqref{half-1-Tamanin} gives us
\begin{equation}\label{Tamanin}
\Ha^{1}(\Gamma_{g_{n}}\cap B_{r}(z_{0}))-\Ha^{1}({l}) \leq C r^{1+2\alpha_{n}}
\end{equation}
and the desired $C^{1,\alpha}$-regularity follows from a classical result for quasiminimizers of the area functional
(see 
Theorem 1 in \cite{tamanini}) once we observe that
\[
\Ha^{1}({l})= \inf \left\{ P(F ; B_{r}(z_{0})) \;:\; F\Delta \Omega_{g_{n}} \Subset  B_{r}(z_{0})  \right\}.
\]
The proof of \eqref{half-2-Tamanin} is a consequence of the $C^{2}$-regularity of $h$ and goes as follows (see Figure
\ref{fig:line}):
\begin{equation*}
\begin{split}
\Ha^{1}(\Gamma_{\tilde{g}_{n}}\cap B_{r}(z_{0}))-\Ha^{1}({l})  & \leq \int_{\theta'_{r}}^{\theta''_{r}}
\sqrt{(\tilde{g}_{n}(\theta))^{2}+(\tilde{g}_{n}'(\theta))^{2}} - \sqrt{(l(\theta))^{2}+(l'(\theta))^{2}}\,d\theta  \\
& \leq \frac{1}{\gamma}  \int_{\theta'_{r}}^{\theta''_{r}} ( \tilde{g}_{n}^{2} - l^{2} ) \,  d\theta + \frac{1}{\gamma} 
\int_{\theta'_{r}}^{\theta''_{r}} (\tilde{g}_{n}'+l')(\tilde{g}_{n}'-l')\,d\theta \\
& \leq \frac{1}{\gamma}  |B_{r}(z_{0})| +  \frac{C}{\gamma}  \int_{\theta'_{r}}^{\theta''_{r}}
\abs{\tilde{g}_{n}'-l'}\,d\theta,
\end{split}
\end{equation*}
where $C$ depends on the Lipschitz norm of $\tilde{g}_{n}$ and $l$ in the interval $(\theta'_{r},\theta''_{r})$ and
$\gamma$ is a positive constant with $\gamma<\min_\R h$.

\begin{figure}[ht]
  \caption{}\label{fig:line}
  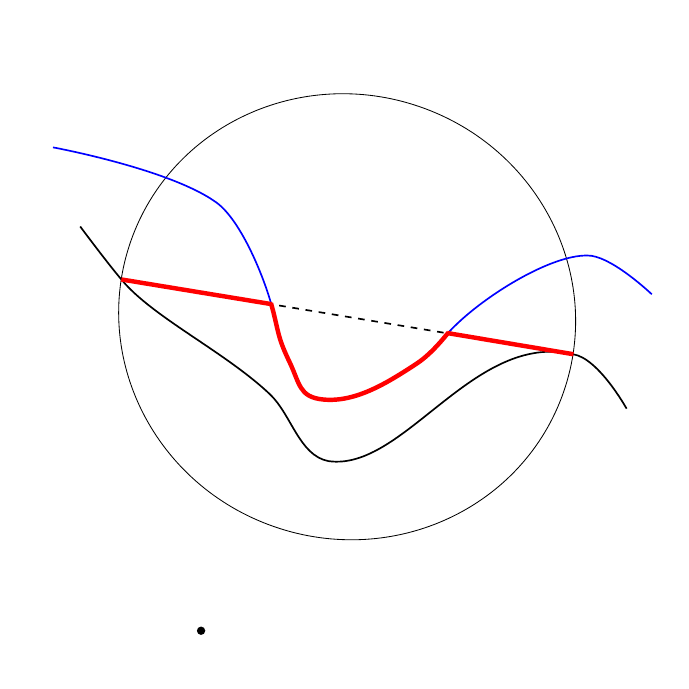
\end{figure}

To estimate the last term we first note that either the set $\{h+1/n < l\}$ is empty or there exists $\theta_{0} \in
(\theta'_{r},\theta''_{r})$ such that $\tilde{g}_{n}'(\theta_{0})-l'(\theta_{0})=0$ and using a second order Taylor
expansion around
$\theta_{0}$ we easily get
\[
 \int_{\theta'_{r}}^{\theta''_{r}} \abs{\tilde{g}_{n}'-l'}\,d\theta \leq C r^{2}
\]
where $C$ depends on the $C^{2}$-norm of $h$.

Now we claim that $g_{n}$ converges to $h$ in the $C^{1}$-norm. As in the proof of Lemma \ref{lem-balltohaus} we will
work
in the plane $(\theta,\rho)$ and we recall that the subgraph of $g_{n}$, still denoted by $F_{g_{n}}$, satisfies the
uniform exterior ball
condition. From the $C^1$-regularity and the uniform Lipschitz estimate, in the Lemma \ref{lem-balltohaus}, we obtain
$\sup_n ||g_n||_{C^1} < \infty $. Hence, from the uniform exterior ball condition we conclude that 
 at every point there exists a parabola touching $g_n$ from above. In other words, there is $C>0$ such that for every
$\theta_0$ it holds
for $P(\theta) = g_n(\theta_0) + g_n'(\theta_0)(\theta - \theta_0) + C\,  (\theta - \theta_0)^2$ that
\[
\min_{\theta} (P- g_n) = (P-g_n)(\theta_0) = 0.
\]  
This implies that the $g_n$ are uniformly semiconcave, i.e., for every $n$ the function
\[
\theta \mapsto g_n(\theta) - C \, \theta^2
\]
is concave. We may now use Lemma \ref{conc-conv} to conclude the desired $C^1$-convergence of $g_n$.  

The convergence of  $g_{n}$ to $h$ in $C^{1}$-norm allows us to use a blow-up method (see 
\cite{Fusco:2009ug}*{Theorem~ 6.10}) to infer the uniform estimate
\begin{equation}\label{est-unif}
\int_{B_{r}(z_{0})} |\nabla v_{n}|^{2}\leq c_{0}r^{1+2\sigma}
\end{equation}
for any $\sigma\in (1/2,1)$ and for all $r<r_{0}$ where $c_{0}$ and $r_{0}$ are independent of $n$.

Once we have \eqref{est-unif}, we can repeat the argument used to prove \eqref{Tamanin}, replacing
 \eqref{est-ext} by \eqref{est-unif}, to infer
\[
\Ha^{1}(\Gamma_{g_{n}}\cap B_{r}(z_{0}))-\Ha^{1}({l}) \leq C r^{1+2\sigma}.
\]
This implies a uniform estimate for the $C^{1,\alpha}$-norms of $g_{n}$ for $\alpha \in (0,1/2)$
(see for instance \cite{Cicalese:2010uc}*{Proposition 2.2}). The $C^{1,\alpha}$-convergence of $g_{n}$ now follows by a
compactness argument.

To conclude the proof we have just to observe that, since $v_{n}$ is a solution of the Lam\'e system in
$\Omega_{g_{n}}$, we can apply the elliptic estimates provided in \cite{Fusco:2009ug}*{Proposition 8.9} to deduce that
$\nabla v_{n}$ is uniformly bounded in $C^{0,\alpha}(\overline{\Omega}_{g_n},\R^{2}\times\R^{2})$ for all $\alpha \in
(0,1/2)$.
\end{proof}

\begin{lemma}\label{lem-convergence}
Let $(h,u) \in X_{\reg}(u_0)$ be a critical point of $\F$ such that $ 0< h < R_0$,
 and  let  $(g_{n},v_{n}) $ be as in the previous lemma with $\abs{\Omega_{g_n}\Delta \Omega_{h}}= o(\sqrt{\eps_n})$ if
$\eps_n$ is not identically zero and $\abs{\Omega_{g_n}\Delta \Omega_{h}}= o(1)$ if $\eps_n=0$ for all $n$. Suppose
that $\nabla
v_{n}\rightharpoonup \nabla u$ weakly in $L^{2}_{\loc}(\Omega_{h};\R^{2}\times \R^{2})$ and
\[
\lim_{n\to \infty} \int_{\Omega_{g_{n}}} Q(E(v_{n}))dz = \int_{\Omega_{h}} Q(E(u))dz.
\] 
 Then $g_{n}\in C^{1,1}(\R)$ and $g_{n}\to h$ in $C^{1,1}(\R)$,  for $n$ sufficiently large. 
\end{lemma}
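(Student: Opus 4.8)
The plan is to derive the Euler--Lagrange inequality for the constrained minimizer $(g_n,v_n)$ of \eqref{panalized.funct}, to combine it with the $C^{1,\alpha}$-estimates of Lemma \ref{lem-more-regular} and with regularity theory for the obstacle problem so as to obtain a uniform $C^{1,1}$-bound on $g_n$, and then to pass to the limit in the equation using that $(h,u)$ is critical in order to upgrade the convergence from $C^{1,\beta}$ to $C^{1,1}$. As a preliminary step I would record the consequences of the hypotheses. From $\nabla v_n\rightharpoonup\nabla u$ in $L^2_{\loc}(\Omega_h)$, $\int_{\Omega_{g_n}}Q(E(v_n))\,dz\to\int_{\Omega_h}Q(E(u))\,dz$ and the uniform $L^\infty$-bound on $E(v_n)$ coming from the equiboundedness of $\nabla v_n$ in $C^{0,\alpha}(\overline\Omega_{g_n})$ (Lemma \ref{lem-more-regular}), one gets $\nabla v_n\to\nabla u$ strongly in $L^2_{\loc}(\Omega_h)$, and then, pulling back by diffeomorphisms $\Psi_n\colon\overline\Omega_h\to\overline\Omega_{g_n}$ constructed as in Proposition \ref{prop.udot} (here with $\norm{\Psi_n-\id}_{C^{1,\alpha}}\le C\norm{g_n-h}_{C^{1,\alpha}}\to0$), Ascoli--Arzel\`a gives $E(v_n)\circ\Psi_n\to E(u)$ uniformly on $\overline\Omega_h$, so that $Q(E(v_n))\circ\Psi_n\to Q(E(u))$ uniformly on $\Gamma_h$. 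Finally, writing $\beta_n:=c\,(|\Omega_{g_n}\Delta\Omega_h|-\eps_n)\bigl((|\Omega_{g_n}\Delta\Omega_h|-\eps_n)^2+\eps_n\bigr)^{-1/2}$ for the coefficient appearing in the first variation of the last penalization term, one has $|\beta_n|\le c$ always, and $\beta_n\to0$ under the assumption $|\Omega_{g_n}\Delta\Omega_h|=o(\sqrt{\eps_n})$ when $\eps_n\not\equiv0$ (the case $\eps_n\equiv0$ uses only $|\beta_n|\le c$).

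\emph{The Euler--Lagrange inequality.} Since $g_n\in C^{1,\alpha}_\sharp(\R)$ and $v_n$ is regular up to $\Gamma_{g_n}$, for every non-negative $\phi\in C^\infty_\sharp(\R)$ the competitor with profile $g_n-t\phi$ is admissible in \eqref{panalized.funct} for small $t\ge0$, and computing $\tfrac{d}{dt}\big|_{0^+}$ by the first-variation computation in the proof of Theorem \ref{2d-var:formula} (where $\tfrac{d}{ds}\F(h_s)=\int_{\Gamma_{h_s}}\langle\vect{\dot h_s},\nu\rangle(k-Q(E(u)))\,d\Ha^1$), together with $\langle\vect\phi,\nu_n\rangle\ge0$, yields that as distributions on $\Gamma_{g_n}$
\[
\mu_n:=Q(E(v_n))-\lambda_n-\beta_n\,\mathrm{sgn}(g_n-h)-k_{g_n}\ \ge\ 0,
\]
where $k_{g_n}$ is the curvature of $\Gamma_{g_n}$, $\lambda_n\in[-\Lambda,\Lambda]$ is the multiplier generated by the term $\Lambda\bigl||\Omega_{g_n}|-|\Omega_h|\bigr|$, and $\mu_n$ is a non-negative measure supported on the closed contact set $E_n:=\{g_n=h+1/n\}$; on the non-contact set $N_n:=\{g_n<h+1/n\}$ two-sided perturbations are admissible, so equality holds there. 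On the interior of $E_n$ one has $g_n=h+1/n$, hence $k_{g_n}=k_{h+1/n}$.

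\emph{Uniform $C^{1,1}$-bound and convergence.} Since $|\lambda_n|\le\Lambda$, $|\beta_n|\le c$ and $E(v_n)$ is uniformly bounded in $L^\infty(\Gamma_{g_n})$, the forcing term $Q(E(v_n))-\lambda_n-\beta_n\,\mathrm{sgn}(g_n-h)$ is uniformly bounded in $L^\infty(\Gamma_{g_n})$; the obstacle $h+1/n$ is uniformly bounded in $C^{1,1}$, and $g_n$ is uniformly bounded in $C^{1,\alpha}$ with $\gamma\le g_n\le R_0-\gamma$, so the curvature operator is uniformly elliptic along $g_n$. Then regularity theory for the obstacle problem (in the spirit of \cite{Fusco:2009ug}*{Proposition~8.9} and \cite{Fonseca:2007bj}) shows that $\mu_n$ has no singular part and $\norm{k_{g_n}}_{L^\infty(\Gamma_{g_n})}\le C$ uniformly, whence, inverting the polar formula $g_n''=\bigl(g_n^2+2(g_n')^2-k_{g_n}(g_n^2+(g_n')^2)^{3/2}\bigr)/g_n$, $\norm{g_n}_{C^{1,1}(\R)}\le C$. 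Combined with the $C^{1,\alpha}$-convergence this gives $g_n\to h$ in $C^{1,\beta}_\sharp(\R)$ for all $\beta<1$ and, along a subsequence, $g_n''\rightharpoonup h''$ hence $k_{g_n}\rightharpoonup k_h$ weak-$*$ in $L^\infty$. Now I would pass to the limit in the equation: on $N_n$, $k_{g_n}=Q(E(v_n))-\lambda_n-\beta_n\,\mathrm{sgn}(g_n-h)$, while on the interior of $E_n$, $k_{g_n}=k_{h+1/n}$; using $Q(E(v_n))\to Q(E(u))$ uniformly, $\beta_n\to0$, $\lambda_n\to\lambda$ and $k_{h+1/n}\to k_h$ uniformly, and writing the criticality relation as $Q(E(u))-k_h\equiv q$ (Lagrange multiplier of $(h,u)$), one finds $k_{g_n}=\bigl(Q(E(u))-\lambda\bigr)+\chi_{E_n}(\lambda-q)+o(1)$ in $L^\infty$; comparing with $k_{g_n}\rightharpoonup k_h=Q(E(u))-q$ forces $(\lambda-q)(1-w)=0$, where $w\in[0,1]$ is the weak-$*$ limit of $\chi_{E_n}$. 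If $|\Omega_{g_n}|>|\Omega_h|$ one has instead $\lambda_n=-\Lambda$ on $N_n$ and the same computation would force $\Lambda=-q$, impossible for the fixed large $\Lambda$ of Lemma \ref{volume-lemma}; and $w\equiv1$ would mean $g_n=h+1/n$ a.e., so $|\Omega_{g_n}|<|\Omega_h|$, contradicting Lemma \ref{volume-lemma}. Hence for $n$ large $|\Omega_{g_n}|=|\Omega_h|$, $w\not\equiv1$ and $\lambda=q$, so that $k_{g_n}\to k_h$ in $L^\infty(\Gamma_{g_n})$ and, by the polar formula, $g_n''\to h''$ in $L^\infty(\R)$, i.e.\ $g_n\to h$ in $C^{1,1}_\sharp(\R)$.

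\emph{Main difficulty.} The crux is the obstacle-problem $C^{1,1}$-estimate invoked above: showing that the reaction measure $\mu_n$ carries no singular mass along the free boundary $\partial E_n$, with a bound uniform in $n$, for the quasilinear curvature operator (uniformly elliptic on the $C^{1,\alpha}$-class) with $L^\infty$ right-hand side and $C^{1,1}$ obstacle. Everything else is bookkeeping with the first-variation formula of Theorem \ref{2d-var:formula}, Lemma \ref{volume-lemma}, and the convergences of the first step; the case $\eps_n\equiv0$ is handled in the same way, the role of $\beta_n\to0$ being replaced by the bound $|\beta_n|\le c$ together with $g_n\to h$.
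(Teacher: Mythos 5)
Your skeleton matches the paper's: Euler--Lagrange equation on the non-contact set $I_n=\{g_n<h+1/n\}$, identity $g_n'=h'$ (and $g_n''=h''$) on the contact set, uniform convergence of $Q(E(v_n))$ up to the boundary, and $\beta_n\to 0$. Two remarks on the first half. What you present as the crux --- an obstacle-problem theorem excluding a singular part of the reaction measure, for which you cite \cite{Fusco:2009ug}*{Proposition 8.9} (which is an elliptic estimate for the Lam\'e system, not an obstacle result) --- is not needed here: in this one-dimensional setting the uniform $C^{1,1}$ bound is elementary, exactly as in the paper, because on $I_n$ the equation \eqref{penalized.eul} with bounded right-hand side bounds $g_n''$, off $I_n$ one has $g_n'=h'$ by \eqref{outside.I_n}, and at the endpoints of each component of $I_n$ the $C^1$ functions $g_n$ and $h+1/n$ touch tangentially, so $g_n'$ is globally Lipschitz with a uniform constant.

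The genuine gap is in the identification of the limit of the multiplier $\lambda_n$, which is exactly the step the paper has to work for. Your weak-$*$ comparison only yields $(\lambda-q)(1-w)=0$ a.e., and your exclusion of the branch $w\equiv 1$ is not valid: $w\equiv 1$ means $|I_n|\to 0$, not that $g_n\equiv h+1/n$ for any finite $n$, so it neither contradicts Lemma \ref{volume-lemma} nor determines the sign of $|\Omega_{g_n}|-|\Omega_h|$ (the area gained on the contact set is of order $1/n$ while the loss on $I_n$ is $o(1)\cdot|I_n|$, and no rate relation between $|I_n|$ and $1/n$ is available); for the same reason your treatment of the case $|\Omega_{g_n}|>|\Omega_h|$, where $\lambda_n=\pm\Lambda$, gives no contradiction when $w\equiv1$. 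This degenerate regime is precisely where the multiplier matters, since the desired $L^\infty$ convergence of $k_{g_n}$, hence the $C^{1,1}$ convergence, requires control on $I_n$ however small it is. The paper pins down $\lambda_n\to\lambda_\infty$ unconditionally by a different device: multiply \eqref{penalized.eul} by $g_n$, integrate over $I_n$, integrate the curvature by parts and use the first-order tangency at the endpoints $a_i^n,b_i^n$ to replace the boundary terms of $g_n$ by those of $h+1/n$; after comparing with \eqref{euler-revisit} and dividing by $|I_n|$, all errors are $o(|I_n|)$, so $\lambda_n\to\lambda_\infty$ even when $|I_n|\to0$. Without this (or an equivalent) argument your proof does not close.
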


\begin{proof}
From Lemma \ref{lem-more-regular} we know that $g_n \to h $ in $ C^{1, \alpha}(\R)$. Therefore for large $n$ there
exist diffeomorphisms  $\Phi_n: \bar{\Omega}_{g_n} \to \bar{\Omega}_h $ such that $\Phi_n \to id$ in $C^{1, \alpha}$.
Let $B_R$ be any ball of radius $R \in (R_0 - \max_\R h , R_0)$. Since 
\[
\sup_{n\in\N}\, \left\{{|| v_n ||}_{C^{1, \alpha}(\bar{\Omega}_{g_n})}\right\}< \infty
\]
by the convergence $\nabla v_n \rightharpoonup
\nabla u$ we have that 
\begin{equation}
\label{convergence.v_n}
\nabla v_n \circ \Phi_n^ {-1} \to \nabla u \qquad   \text{in} \,\, C^{0, \alpha}(\bar{\Omega}_h \cap B_R; \,  \Ma^{2
\times
2}).
\end{equation}

To prove the claim set $I_n := \{ \theta \in [0, 2\pi] \mid g_n (\theta) < h(\theta) + 1/n =: h_n(\theta)\}$. Since
$I_n$ is open, we may write $I_n = \bigcup_{i=1}^{\infty} (a_i^n, b_i^n)$. Notice that 
\begin{equation}
\label{outside.I_n}
g_n'(\theta)=  h_n'(\theta) = h'(\theta) \qquad \text{on} \,\,  [0, 2 \pi] \setminus I_n.
\end{equation} 
If $I_n$ is empty, the claim is trivial. Therefore we may assume that $I_n\neq\emptyset$.  Since $g_n
\in C^{1, \alpha}(\R)$, we can write the Euler-Lagrange equation for $(g_n,v_n)$ in the weak sense:
\begin{equation}
\label{penalized.eul}
k_{g_n}(\theta) = Q(E(v_n))(\theta, g_n(\theta))  + \beta_n(\theta, g_n(\theta)) + \lambda_n, \qquad \theta \in I_n.
\end{equation}
Here 
\[
\beta_n =  \frac{\Lambda \, \abs{\Omega_{g_n}\Delta \Omega_{h}}}{ \sqrt{(\abs{\Omega_{g_n}\Delta \Omega_{h}} -
\eps_n )^2+\eps_n }} \, \text{sign}\, (\chi_{\Omega_h} - \chi_{\Omega_{g_n}} )
\]
and $\lambda_n$ is some Lagrange multiplier. Notice that from the assumptions it follows that 
\begin{equation}
\label{beta-to-zero}
| \beta_n | = \frac{\Lambda \, \abs{\Omega_{g_n}\Delta \Omega_{h}}}{ \sqrt{(\abs{\Omega_{g_n}\Delta \Omega_{h}} -
\eps_n )^2+\eps_n }} \leq \Lambda \, \frac{\abs{\Omega_{g_n}\Delta \Omega_{h}}}{\sqrt{\eps_n}} \to 0.
\end{equation}

Recall the Euler-Lagrange equation for $(h, u)$
\begin{equation}
\label{euler-revisit}
k_{h}(\theta) = Q(E(u))(\theta, h(\theta)) + \lambda_{\infty}\,.
\end{equation}
We will show that $\lambda_n \to \lambda_{\infty}$. Notice that for the curvature in polar coordinates it holds that
\[
k_{g_n} \,  g_n = \frac{g_n^2+ 2 g_n'^2- g_n g_n''}{(g_n^2 + g_n'^2)^{\frac{3}{2}}} \, g_n =  -   \left(
\frac{g_n'}{\sqrt{g_n^2 +g_n'^2}} \right)' + \frac{g_n}{\sqrt{g_n^2 +g_n'^2}}.
\]
Hence, multiplying \eqref{penalized.eul} by $g_n$, integrating over $I_n$ and using \eqref{euler-revisit} yield 
\[
\begin{split}
  \int_{I_n}  & \Bigl[ Q(E(v_n)) \bigl(\theta, g_n(\theta)\bigr)  + \beta_n\bigl(\theta, g_n(\theta)\bigr) + \lambda_n
\Bigr] \,g_n \, d
\theta = \int_{I_n}  k_{g_n} \,g_n \,  d \theta \\
&= \int_{I_n}  -   \left( \frac{g_n'}{\sqrt{g_n^2 +g_n'^2}} \right)' + \frac{g_n}{\sqrt{g_n^2 +g_n'^2}}\,  d \theta \\
&= \sum_{i=1}^{\infty}   -   \left( \frac{g_n'(b_i^n)}{\sqrt{g_n^2(b_i^n) +g_n'^2(b_i^n)}}- 
\frac{g_n'(a_i^n)}{\sqrt{g_n^2(a_i^n)
+g_n'^2(a_i^n)}} \right) +  \int_{a_i^n}^{b_i^n}  \frac{g_n}{\sqrt{g_n^2 +g_n'^2}}\,  d \theta \\
&= \sum_{i=1}^{\infty}    -   \left( \frac{h_n'(b_i^n)}{\sqrt{h_n^2(b_i^n) +h_n'^2(b_i^n)}}- 
\frac{h_n'(a_i^n)}{\sqrt{h_n^2(a_i^n)
+h_n'^2(a_i^n)}} \right) +  \int_{a_i^n}^{b_i^n}  \frac{g_n}{\sqrt{g_n^2 +g_n'^2}}\,  d \theta \\
&= \int_{I_n}  k_{h_n} \,h_n \,  d \theta +  \int_{I_n}  \frac{g_n}{\sqrt{g_n^2 +g_n'^2}} - \frac{h_n}{\sqrt{h_n^2
+h_n'^2}}\,\,  d \theta  \\
&= \int_{I_n} \Bigl[ Q(E(u))\bigl(\theta, h(\theta)\bigr)  + \lambda_\infty \Bigr] \,h \,  d \theta +  \int_{I_n}
(k_{h_n} \,h_n -
k_{h}
\,h) + \frac{g_n}{\sqrt{g_n^2 +g_n'^2}} - \frac{h_n}{\sqrt{h_n^2 +h_n'^2}}\,\,  d \theta  .
\end{split}
\]
Recall that $h_n = h + 1/n$. Therefore by \eqref{convergence.v_n}, \eqref{beta-to-zero} and the previous
calculations  we conclude that 
\[
\lim_{n \to \infty}\frac{1}{|I_n|} \int_{I_n} \lambda_n g_n - \lambda_\infty h \, d \theta = 0\,,
\]
which clearly implies $\lambda_n \to \lambda_\infty$. 

From \eqref{outside.I_n} and \eqref{penalized.eul} we conclude that $g_n \in C^{1,1}(\R)$. Moreover by the 
equations  \eqref{outside.I_n}, \eqref{penalized.eul} and  \eqref{euler-revisit} together with the convergences 
\eqref{convergence.v_n}, \eqref{beta-to-zero} and $\lambda_n \to \lambda_\infty$ we conclude that
\[
 k_{g_n}  \to k_{h} \qquad \text{in } L^{\infty}.
\] 
This in turn gives us the convergence
\[
g_n'' \to h'' \qquad \text{in }  L^{ \infty}.
\]
\end{proof}

Now we are in  position to prove the main theorem of this section.
\begin{proof}[Proof of Theorem \ref{TH:criterion}]
\textbf{Step 1:} We show first that $(h,u)$ is a strict local minimizer, i.e., we prove the claim without the estimate on the
right-hand side of \eqref{main-equation}. 

Observe that from the results  of the previous section we may assume that $(h,u)$ is a
$C^{1,1}$-local minimizer. The result will follow once we prove that the $C^{1,1}$-local minimality implies
the local minimality. Arguing by contradiction let us assume that for any $n\in \N$ there exist $(h_{n},u_{n})\in 
X(u_{0})$ with $|\Omega_{h_{n}}|=|\Omega_{h}|$ such that 
\[
\F(h_{n},u_{n}) \leq \F(h,u) \quad \text{ and }\quad 0< d_{\Ha}(\Gamma_{h_{n}}\cup\Sigma_{h_n},\Gamma_{h})\leq \frac{1}{n}.
\]
Consider the sequence $(g_{n},v_{n})\in X(u_{0})$ of minimizers of the following penalized obstacle problem
\[
\min\left\{  \F(g,v)+\Lambda \big| |\Omega_{g}|-|\Omega_{h}|\big| \;:\; (g,v)\in X(u_{0}),\, g\leq h+\frac{1}{n}
\right\}\,,
\]
for  some large $\Lambda $. Since $(h_n,u_n)$ and $(h,u)$ are clearly competitors, we have that
\[\F(g_{n},v_{n})\leq \F(h_n,u_n) \leq\F(h,u)\,.\]
By the contradiction assumption we may assume that $(h_n,u_n) \neq (h,u)$.

By the compactness property of $X(u_{0})$ there exists $(g,v)$ such that, up to subsequences,
$(g_{n},v_{n})\to (g,v)$ in $X(u_{0})$. Let $(f,w)\in X(u_{0})$ with $f\leq h$, by the lower semicontinuity of $\F$ and
the minimality of $(g_{n},v_{n})$, we get
\begin{equation}
\label{minimality}
\begin{split}
\F(g,v)+\Lambda\big| |\Omega_{g}|-|\Omega_{h}| \big|  & \leq \liminf_{n\to \infty} \Big[  \F(g_{n},v_{n}) +\Lambda\big| |\Omega_{g_{n}}|-|\Omega_{h}| \big|\Big]  \\ 
& \leq \F(f,w)+\Lambda\big| |\Omega_{f}|-|\Omega_{h}| \big|.
\end{split}
\end{equation}
Choosing $(f,w)=(h,v)$ in the previous inequality, we obtain that
\begin{equation}\label{controlemma}
\Ha^{1}(\Gamma_{g})+\Lambda \big| |\Omega_{g}|-|\Omega_{h}| \big| \leq \Ha^{1}(\Gamma_{h})
\end{equation}
When $\Lambda$ is sufficiently large, \eqref{controlemma} and Lemma \ref{lemma-isoperimetric} imply that $g=h$. Moreover, we observe that from \eqref{minimality} it follows that $(h,v)$ minimizes $\F$ in the class of all $(f,w)\in X(u_{0})$ with $f= h$. In particular $v$ must coincide with the elastic equilibrium
$u$.

Choosing $(f,w)=(h,u)$ in \eqref{minimality}, using the lower
semicontinuity of  $g\mapsto \mathcal H^1(\Gamma_g)$ with respect to the $L^1$-convergence and the lower semicontinuity of 
the elastic energy with respect to the weak $H^1$-convergence, we deduce 
\[
\lim_{n\to \infty}\Ha^{1}(\Gamma_{g_{n}}\cup \Sigma_{g_{n}})=\Ha^{1}(\Gamma_{h}),
\]
\[
\lim_{n\to \infty}\int_{\Omega_{g_{n}}} Q(E(v_{n}))\,dz=\int_{\Omega_{h}} Q(E(u))\,dz.
\]
From Lemma \ref{lem-convergence} we get $g_{n}\to h$ in $C^{1,1}(\R)$. 

We only need to modify $g_{n}$ such that it satisfies the volume constraint. We simply define
$\tilde{g}_n(\theta):= g_n(\theta) + \delta_n$ where $ \delta_n $ are chosen so that $|\Omega_{\tilde{g}_n}|
=|\Omega_h|$. By Lemma \ref{volume-lemma} it holds $|\Omega_{g_n}| \geq |\Omega_h|$ and therefore $\delta_n\geq 0$ and
$\Omega_{\tilde{g}_n} \subset \Omega_{g_n}$. Hence $v_n$ is well defined in $\Omega_{\tilde{g}_n}$ and $(\tilde{g}_n,
v_n)$ is an admissible pair. 

Since $h > 0$ and $g_n \to h$ uniformly, we have $g_n >\gamma$ for some $\gamma>0$ independent from $n$ and $\delta_n
\to 0$. We may estimate
\[
\begin{split}
\Ha^{1}(\Gamma_{\tilde{g}_n})-\Ha^{1}(\Gamma_{g_n}) &= \int_0^ {2 \pi} \sqrt{(g_n + \delta_n)^ 2 + g_n'^ 2} -   \sqrt{g_n^ 2 + g_n'^ 2} \, d \theta \\
&\leq \frac{1}{\gamma}  \int_0^ {2 \pi}  2g_n \delta_n + \delta_n^ 2  \, d \theta
\end{split}
\]
and
\[
 \big| |\Omega_{\tilde{g}_n}| - | \Omega_{g_n}| \big| = \frac{1}{2} \int_0^ {2 \pi} (g_n + \delta_n)^ 2  -   g_n^ 2 \, d \theta =   \frac{1}{2} \int_0^ {2 \pi} 2g_n \delta_n + \delta_n^ 2  \, d \theta.
\]
Therefore whenever $\Lambda \geq \frac{2}{\gamma}$ we have
\begin{equation}
\label{absorbed}
 \Ha^{1}(\Gamma_{\tilde{g}_n})-\Ha^{1}(\Gamma_{g_n}) \leq \Lambda \big| |\Omega_{\tilde{g}_n}| - | \Omega_{g_n}| \big|.
\end{equation}

The claim now follows, since by the choice of $\tilde{g}_n$ and by \eqref{absorbed} we have
\[
\begin{split}
\F(\tilde g_{n}, v_{n})  &= \F(\tilde g_{n}, v_{n}) +  \Lambda \big| |\Omega_{\tilde g_{n}}| - | \Omega_h| \big| \\
&\leq  \F(g_{n},v_{n})  + \Lambda \big| |\Omega_{ g_n}| - | \Omega_h| \big| \leq \F(h_n,u_n) \leq \F(h,u)\,.
\end{split}
\]
This contradicts the fact that $(h,u)$ is a strict  $C^{1,1}$-local minimizer.

\noindent\textbf{Step 2:} We will now prove the theorem. 
The proof is very similar to the first step. Arguing by contradiction we assume that there are  $(h_{n},u_{n})\in 
X(u_{0})$ with $|\Omega_{h_{n}}|=|\Omega_{h}|$ such that 
\[
\F(h_{n},u_{n}) \leq \F(h,u) + c_0 \,|\Omega_{h_n} \Delta \Omega_h|^2
\quad \text{ and }\quad 0 < d_{\Ha}({\Gamma}_{h_{n}}\cup\Sigma_{g_n},\Gamma_{h})\leq \frac{1}{n}\,.
\]
Denote $\eps_n := |\Omega_{h_n} \Delta \Omega_h|$. Notice that $d_{\Ha}(\Gamma_{h_{n}}\cup\Sigma_{g_n},\Gamma_{h})
\to 0$ implies $\chi_{\Omega_{h_n}} \to \chi_{\Omega_{h}}$ in $L^1$ and therefore $\eps_n \to 0$. 

This time we replace the contradicting sequence $(h_{n},u_{n})$ by  $(g_{n},v_{n})\in X(u_{0})$ which  minimizes 
\[
\min\left\{  \F(g,v)+\Lambda \big| |\Omega_{g}|-|\Omega_{h}|\big| +  \sqrt{(\abs{\Omega_{g}\Delta \Omega_{h}} -
\eps_n )^2+\eps_n } \;:\; (g,v)\in X(u_{0}),\,g\leq h+\frac{1}{n} \right\}\,.
\]
By compactness we may assume that, up to a subsequence, $(g_{n},v_{n})\to (g,v)$ in $X(u_{0})$. By a completely similar 
argument as in Step 1 we conclude that $(g,v) = (h,u)$
whenever $\Lambda$ is sufficiently large. Moreover, we have that
\[
\lim_{n\to \infty}\Ha^{1}(\Gamma_{g_{n}}\cup \Sigma_{g_{n}})=\Ha^{1}(\Gamma_{h}),
\]
\[
\lim_{n\to \infty}\int_{\Omega_{g_{n}}} Q(E(v_{n}))\,dz=\int_{\Omega_{h}} Q(E(u))\,dz.
\]
To conclude that $g_{n}\to h$ in $C^{1,1}(\R)$,  we will prove that
 \begin{equation}
\label{conv-to-1}
\lim_{n \to \infty} \frac{\abs{\Omega_{g_n}\Delta \Omega_{h}}}{\eps_n} =1
\end{equation}
and apply Lemma \ref{lem-convergence}.

Suppose that \eqref{conv-to-1} were false. Then there exists $c > 0$ such that $\big| \abs{\Omega_{g_n}\Delta
\Omega_{h}} - \eps_n \big|  \geq c \, \eps_n $.  Using the minimality of $(g_n,v_n)$ and the contradiction assumption
for $(h_n,u_n)$, we obtain
\begin{equation}\label{long.inequality}
\begin{split}
 \F(g_n,v_n)+ &\Lambda \big| |\Omega_{g_n}|-|\Omega_{h}|\big| +  \sqrt{(\abs{\Omega_{g_n}\Delta \Omega_{h}} - \eps_n )^2+\eps_n } \\
 &\leq  \F(h_n,u_n)+   \sqrt{\eps_n } \\
&<  \F(h,u)+ c_0 \eps_n^2 +  \sqrt{\eps_n }.
\end{split}
\end{equation}

Now we observe that from \cite{FFLmill}*{Proposition 6.1}, for $\Lambda$ sufficiently large, $(h,u)$ is also a minimizer of the penalized problem
\[
\F(g,v)+ \Lambda \big| |\Omega_{g}|-|\Omega_{h}|\big|.
\]
Hence we have
\begin{equation}\label{long.inequality-2}  
\F(h,u) \leq  \F(g_n,v_n)+ \Lambda \big| |\Omega_{g_n}|-|\Omega_{h}|\big| .
\end{equation}
Combining \eqref{long.inequality} and \eqref{long.inequality-2} we get
\[
\sqrt{ c^2 \eps_n^2 + \eps_n} \leq    \sqrt{(\abs{\Omega_{g_n}\Delta \Omega_{h}} - \eps_n )^2+\eps_n } <c_0\,  \eps_n^2 +  \sqrt{\eps_n },
\]
which is a contradiction since $\eps_n \to 0$ proving \eqref{conv-to-1}. 

Arguing  as in \eqref{long.inequality} and by using \eqref{conv-to-1} we obtain 
\begin{equation}
\label{almost.there}
\begin{split}
\F(g_n,v_n)+ \Lambda \big| |\Omega_{g_n}|-|\Omega_{h}|\big| &\leq    \F(h_n,u_n)+  \sqrt{\eps_n } - \sqrt{(\abs{\Omega_{g_n}\Delta \Omega_{h}} - \eps_n )^2+\eps_n } \\
&<  \F(h,u)+ c_0 \eps_n^2 \\
&\leq   \F(h,u)+ 2 c_0 \, \abs{\Omega_{g_n}\Delta \Omega_{h}}^2 ,
\end{split}
\end{equation}
when $n$ is large.

As in Step 1 define
$\tilde{g}_n(\theta):= g_n(\theta) + \delta_n$ where $ \delta_n \geq 0$ are such that $|\Omega_{\tilde{g}_n}| =|\Omega_h|$. By choosing $\Lambda$ large enough we have
\begin{equation}
\label{absorbed2}
 \Ha^{1}(\Gamma_{\tilde{g}_n})-\Ha^{1}(\Gamma_{g_n}) \leq \frac{\Lambda}{2} \,  \big| |\Omega_{\tilde{g}_n}| - | \Omega_{g_n}| \big|.
\end{equation}
Therefore since 
\[
\abs{\Omega_{g_n}\Delta \Omega_{h}}^2 \leq  2 \abs{\Omega_{\tilde{g}_n}\Delta \Omega_{h}}^2  + 2 \abs{\Omega_{\tilde{g}_n}\Delta \Omega_{g_n}}^2 = 2 \abs{\Omega_{\tilde{g}_n}\Delta \Omega_{h}}^2  + 2 \big|  | \Omega_{g_n}| - |\Omega_{h}| \big|^2  
\]
we have by \eqref{almost.there} and  \eqref{absorbed2} that
\[
\begin{split}
\F(\tilde{g}_n,v_n) &\leq  \F(g_n,v_n)+ \frac{\Lambda}{2} \big| |\Omega_{g_n}|-|\Omega_{h}|\big| \\
& < \F(h,u)+ 2 c_0 \, \abs{\Omega_{g_n}\Delta \Omega_{h}}^2 - \frac{\Lambda}{2} \big|
|\Omega_{g_n}|-|\Omega_{h}|\big| \\
&\leq  \F(h,u)+ 4 c_0 \, \abs{\Omega_{\tilde{g}_n}\Delta \Omega_{h}}^2 - \frac{\Lambda}{2} \big| |\Omega_{g_n}|-|\Omega_{h}|\big| + 4 c_0 \,   \big|  | \Omega_{g_n}| - |\Omega_{h}| \big|^2 \\
&\leq \F(h,u)+ 4 c_0 \, \abs{\Omega_{\tilde{g}_n}\Delta \Omega_{h}}^2 ,
\end{split}
\]
when $n$ is sufficiently large. This contradicts Proposition \ref{H2-localmin} when $c_0$ is chosen to be small enough.
\end{proof}

\section{The case of the disk}\label{Sec:6}

In this section we consider the particular case when a radial stretching is applied to a material with round cavity
$F=\bar{B}_r$. We prove that the disk remains stable under small radial stretching. This result is similar to the case
of flat configuration in \cite{Fusco:2009ug}. The main difference to the flat case, where the minimal shape is a
rectangle, is that the curvature of the disk is nonzero and therefore the second variation formula becomes considerably
more complicated. Instead of trying to explicitly write the second variation, we use fine estimates to find a range of
stability.

The Dirichlet boundary condition has the form of radial stretching,
\begin{equation}
\label{dirichlet}
u_0\bigl(\rho\sigma(\theta)\bigr) = \alpha R_0 \sigma(\theta)\qquad \text{ for }\rho\geq R_0\,,
\end{equation}
where $\alpha \in \R$ is some constant. The region occupied by the elastic
material is the annulus $A(R_0,r) := B_{R_0} \setminus \bar{B}_r$. For $u_0$ as above 
we say that $(h,u)\in X(u_0)$ is a \emph{round configuration} if $h(\theta)\equiv r$ and $u$ is the elastic equilibrium
associated to $h$.

For the next theorem we define
\[
 \beta(t):=1+\frac{\mu+\lambda}{\mu}\frac{t^2}{R_0^2}\,.
\]
Recall also the definition of the ellipticity constant $\eta=\min \{\mu,\mu+\lambda\}$.

\begin{theorem}
\label{RoundIsMin}
Let 
\begin{equation*}
\label{def.r_1}
r_0 : = \sup \left\{ t \leq R_0 \mid (1+ t^2) \log\left( \frac{R_0}{t}  \right) \geq \frac{\eta}{4 \mu}\right\}
\end{equation*}
and define the function $G:\R\to [-\infty,R_0)$ as
\begin{equation*}
\label{def.r_2}
G(\alpha): = \sup\left\{  t \leq R_0 \mid  t \log\left(\frac{R_0}{t} \right)\beta^2(t) \geq \frac{\eta}{32  (\mu +
\lambda)^2 \alpha^2} \right\}\,.
\end{equation*}
If $r\in(r_0,R_0)$ and $\alpha\in\R$ satisfy 
\begin{equation}\label{vesacond}
 r>G(\alpha)\,,
\end{equation}
then the round configuration is a strict local minimizer of $\F$ under the volume constraint.
\end{theorem}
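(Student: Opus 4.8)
The plan is to invoke the quantitative criterion of Theorem~\ref{TH:criterion}, which reduces the statement to showing that the round configuration $(h,u)$, $h\equiv r$, is a critical pair and that $\partial^2\F(h,u)$ is positive. Criticality is immediate: by the radial symmetry of the datum \eqref{dirichlet} the elastic equilibrium $u$ is radial, so $Q(E(u))$ is constant on $\Gamma_h=\partial B_r$ while the curvature is the constant $k\equiv1/r$; hence the third equation in \eqref{euler} holds, and the first two just express that $u$ is the elastic equilibrium associated with $h$. Thus everything reduces to proving that $\partial^2\F(h,u)[\psi]>0$ for every $\psi\in H^1_\sharp(\R)$, $\psi\not\equiv0$, satisfying \eqref{pertur}, which for $h\equiv r$ reads $\int_0^{2\pi}\psi\,d\theta=0$, i.e.\ $\psi$ has vanishing zeroth Fourier coefficient.

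The next step is to make the quadratic form \eqref{biliear} explicit for the disk. Solving the radial elasticity problem one finds $u\bigl(\rho\sigma(\theta)\bigr)=(A\rho+B/\rho)\sigma(\theta)$ with $A=\alpha/\beta(r)$ and $B=\frac{\mu+\lambda}{\mu}\,\alpha r^2/\beta(r)$ (impose $\C E(u)[\nu]=0$ on $\partial B_r$ and $u=\alpha R_0\sigma$ on $\partial B_{R_0}$, with $\beta$ as in the statement). One then computes $Q(E(u))=2(\mu+\lambda)A^2+2\mu B^2/\rho^4$, so that $\partial_\nu Q(E(u))=-8\mu B^2/r^5$ is an explicit negative constant, and $\C E(u)$ restricted to $\partial B_r$ equals $4(\mu+\lambda)\tfrac{\alpha}{\beta(r)}\,\tau\otimes\tau$ in Cartesian coordinates. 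Using $\nu=\sigma$, $\tau=\sigma^\perp$, $\partial_\tau=\tfrac1r\partial_\theta$, $d\Ha^1=r\,d\theta$ and $\langle\vect{\psi},\nu\rangle=\psi$ on $\partial B_r$, the form \eqref{biliear} reduces to
\[
\partial^2\F(h,u)[\psi]=-\int_{\Omega_h}2Q(E(u_\psi))\,dz+\frac1r\int_0^{2\pi}\bigl(|\psi'|^2-\psi^2\bigr)\,d\theta+\frac{8\mu B^2}{r^4}\int_0^{2\pi}\psi^2\,d\theta .
\]

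I would then decompose $\psi$ into Fourier modes. For modes of order $\ge2$ the zero-mean constraint gives the Poincaré surplus $\frac1r\int_0^{2\pi}(|\psi'|^2-\psi^2)\,d\theta\ge\frac3{4r}\int_0^{2\pi}|\psi'|^2\,d\theta$; the zeroth mode is excluded by \eqref{pertur}; and the first mode is the borderline case, where $\int(|\psi'|^2-\psi^2)$ vanishes identically, so that positivity must come entirely from the (favourable, since $B^2>0$) term $\frac{8\mu B^2}{r^4}\int\psi^2$ beating the bulk term. In every case the crux is an upper bound for the nonlocal elastic energy $\int_{\Omega_h}2Q(E(u_\psi))$: testing \eqref{u_psi} with $w=u_\psi$ and integrating by parts on the closed curve $\Gamma_h$ gives $\int_{\Omega_h}2Q(E(u_\psi))=\int_{\Gamma_h}\psi\,\C E(u):E(u_\psi)\,d\Ha^1$, and combining this with the trace theorem, with Korn's inequality on the annulus $A(R_0,r)$ — where one must use the sharp constant of Theorem~\ref{kornann} — and with the explicit bounds for $\C E(u)$ and $\partial_\tau\C E(u)$ on $\partial B_r$ coming from the formulas above, one obtains, after keeping careful track of every constant, an upper bound for $\int_{\Omega_h}2Q(E(u_\psi))$ by a multiple of $\int_0^{2\pi}(|\psi'|^2+\psi^2)\,d\theta$ whose coefficient is an explicit function of $\mu,\lambda,\alpha,r,R_0$ built from $\beta(r)$ and $\log(R_0/r)$. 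Feeding this back into the displayed identity, the purely geometric (i.e.\ $\alpha$-free) part of the resulting inequality is exactly ensured by $r>r_0$ and the part carrying the factor $\alpha^2$ by $r>G(\alpha)$, so that $\partial^2\F(h,u)[\psi]>0$ for every admissible $\psi\ne0$; Theorem~\ref{TH:criterion} then yields that the round configuration is a strict local minimizer under the volume constraint.

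The main obstacle is precisely this bulk estimate. A crude bound on $\int_{\Omega_h}2Q(E(u_\psi))$ would produce an empty stability range, so one needs an inequality sharp both in its dependence on the elastic constants and $\alpha$ (through $\beta$) and in its geometric dependence on $r/R_0$ (through $\log(R_0/r)$), which forces one to exploit the optimal Korn constant on the (possibly thin) annulus and to treat the low Fourier modes — most delicately the first one — by hand, a clean mode-by-mode diagonalization being precluded by the coupling of $\psi$ with the mode-$\pm2$ content of $\C E(u)$ on $\partial B_r$.
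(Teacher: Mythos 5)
Your reduction of the theorem to the positivity of $\partial^2\F(h,u)$ via Theorem \ref{TH:criterion}, the computation of the radial equilibrium, of $\C E(u)=4b(\mu+\lambda)\,\tau\otimes\tau$ on $\partial B_r$, of $\partial_\nu Q(E(u))$, and the resulting explicit form of the quadratic form all agree with the paper (your $A,B$ are the paper's $b,a$), and the endgame via the zero-mean/Wirtinger inequality is also the paper's. The gap is in the one step you leave as a black box, namely the upper bound on the nonlocal term $\int_{\Omega_h}2Q(E(u_\psi))\,dz$ with a constant sharp enough to reproduce the thresholds $r_0$ and $G(\alpha)$. Your plan — test \eqref{u_psi} with $w=u_\psi$, then use the trace theorem and ``Korn's inequality on the annulus $A(R_0,r)$ with the sharp constant of Theorem \ref{kornann}'' — does not work as stated. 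First, Theorem \ref{kornann} only says the constant depends on the ratio of the radii and tends to $4$ as that ratio tends to $0$; in the regime of the theorem ($r>r_0$, $r>G(\alpha)$, hence $r$ close to $R_0$) the annulus $A(R_0,r)$ is thin, its Korn constant is not controlled by Theorem \ref{kornann} at all (and the constant in the trace theorem \ref{tracethm} is likewise inexplicit), so ``keeping careful track of every constant'' has nothing explicit to track and, as you yourself observe, a non-sharp bound yields an empty stability range. Second, Korn's inequality in the form \ref{kornann} requires the normalization $\int Dv\,dz=\int Dv^T dz$, so one cannot test simply with $u_\psi$; one must use $w=u_\psi+Az$ with $A$ antisymmetric, and then $w$ no longer vanishes on $\partial B_{R_0}$, which is exactly why a plain trace/Poincar\'e argument is insufficient.

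The paper's proof supplies the two devices that make the constants come out: (i) the elementary boundary estimate of Lemma \ref{bdrest}, proved by integrating along rays, which gives $\int_{\partial B_r}|w|^2\,d\Ha^1\le r\log(R_0/r)\int|Du_\psi-\tfrac{r}{R_0-r}A|^2\,dz$ with the explicit logarithmic factor and with the rigid part subtracted; and (ii) the observation that $u_\psi\equiv0$ outside $B_{R_0}$, so the energy can be regarded as living on arbitrarily large annuli $A(R_k,r)$ with $R_k\to\infty$, on which the Korn constant of Theorem \ref{kornann} tends to $4$. Combining these with the test function $u_\psi+Az$ in \eqref{u_psi} yields exactly \eqref{est.udot}, i.e.\ $2\int_{\Omega_h}Q(E(u_\psi))\,dz\le \tfrac{32(\mu+\lambda)^2b^2}{\eta}\,r\log(R_0/r)\int_{\partial B_r}\bigl(\langle\vect{\psi},\nu\rangle^2+|\partial_\tau\langle\vect{\psi},\nu\rangle|^2\bigr)d\Ha^1$, whose constants $32$ and $4/\eta$ are precisely those appearing in $G(\alpha)$ and $r_0$. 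Without (i) and (ii) your argument does not reach the stated thresholds; once \eqref{est.udot} is in hand, no Fourier mode-by-mode analysis is needed — a single regrouping of terms plus Wirtinger's inequality (with $r>r_0$ ensuring the coefficient of $\int\psi^2$ left over is positive, and $r>G(\alpha)$ ensuring $1-32\eta^{-1}(\mu+\lambda)^2b^2r\log(R_0/r)>0$) finishes the proof.
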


The elastic equilibrium $u$ can be explicitly calculated. Indeed, because of the symmetry we can write
\[u(\rho\sigma(\theta)) = f(\rho)\sigma(\theta) 
\]
and applying the first equation in \eqref{euler} we have
\[
f''(\rho) + \frac{f'(\rho)}{\rho} - \frac{f(\rho)}{\rho^2} = 0\,.
\]
This can be easily solved 
\[
f(\rho) = \frac{a}{\rho} + b \rho,
\]
for some $a,b \in \R$. To find $a$ and $b$ observe that
\begin{equation}
\label{GenCEu}
\begin{split}
\C E(u) =  2 \mu \begin{pmatrix}   f'(\rho) \cos^2 \theta + \frac{f(\rho)}{\rho} \sin^2 \theta     &     \left( f'(\rho)  - \frac{f(\rho)}{\rho} \right) \sin \theta \cos \theta \\
			   \left( f'(\rho)  - \frac{f(\rho)}{\rho} \right) \sin \theta \cos \theta &   f'(\rho) \sin^2 \theta + \frac{f(\rho)}{\rho} \cos^2 \theta 
\end{pmatrix} + \lambda \left( f'(\rho) + \frac{f(\rho)}{\rho} \right) \begin{pmatrix} 1 & 0 \\
0 & 1 \end{pmatrix}.
\end{split}
\end{equation}
Therefore, the second equation in \eqref{euler} gives  
\[
(2 \mu + \lambda ) f'(r) + \lambda \frac{f(r)}{r} = 0.
\]
This and the Dirichlet condition \eqref{dirichlet} yield
\begin{equation}
\label{bee}
 \frac{a}{r^2} =\frac{\mu + \lambda }{\mu} \,  b \quad \text{and} \quad b= \frac{\alpha}{\beta(r)}.
\end{equation}

It is trivial to check that the round configuration is a critical point of $\F$. To prove Theorem \ref{RoundIsMin}
we need to show that the round configuration is a point of positive second variation. To this aim, let us
explicitly write the
quadratic form \eqref{biliear}.  By \eqref{GenCEu} and
\eqref{bee} we have 
\[
\C E(u) =  4b( \mu + \lambda)  \begin{pmatrix}   \sin^2 \theta     &   - \sin \theta \cos \theta \\
							- \sin \theta \cos \theta  & \cos^2 \theta 
\end{pmatrix} = 4b (\mu + \lambda ) \, \, \tau \otimes \tau,
\]
on the boundary $\partial B_r$. Hence, for $\psi\in H^1_\sharp (\R)$, we have
\[
\diver_{\tau} ( \langle \vect{\psi}, \nu \rangle\C E(u)) = 4b(\mu + \lambda ) (-  \langle \vect{\psi}, \nu \rangle \nu +  \partial_{\tau} \langle \vect{\psi}, \nu \rangle \tau)
\]
and the equation \eqref{u_psi} for $u_{\psi}$ becomes
\begin{equation}
\label{udotflat}
\int_{A(R,r)} \C E(u_{\psi}) : E(w) \, dz = - 4b(\mu + \lambda ) \int_{\partial B_r}  \left(-  \langle \vect{\psi}, \nu \rangle \langle w, \nu \rangle +  ( \partial_{\tau} \langle \vect{\psi}, \nu \rangle ) \langle w, \tau \rangle \right) \, d \Ha^1.
\end{equation}
Moreover, in the case of a round configuration the elastic energy is
\begin{equation}
\label{energycal}
Q(E(u)) = 2( \mu + \lambda ) b^2+ 2 \mu \frac{a^2}{\rho^4} 
\end{equation}
and therefore, by \eqref{bee}, we get
\[
\partial_{\nu} Q(E(u)) = - \frac{8(\mu + \lambda)^2}{\mu} \frac{b^2}{r} \qquad \text{on} \quad \partial B_r.
\]
Hence, \eqref{biliear} becomes
\begin{equation}
\label{disk2}
\begin{split}
\partial^2 \F(h, u)[\psi] =  &- \int_{A(R,r)} 2  Q(E(u_{\psi}))  \,  dz + \int_{ \partial B_r} |  \partial_{\tau}
\langle \vect{\psi}, \nu \rangle |^2 \,  d \Ha^1\\
&+ \int_{ \partial B_r} \left(  \frac{8(\mu + \lambda)^2}{\mu} \frac{b^2}{r}  -   \frac{1}{r^2} \right)\,  \langle \vect{\psi}, \nu \rangle^2 \,  d \Ha^1, 
\end{split}
\end{equation}
where $u_{\psi} \in \A(B_R \setminus \bar{B}_r)$ solves (\ref{udotflat}), and $\psi$ satisfies
$\int_0^{2 \pi} \psi \, d \theta = 0$.

Now the goal is to prove that  \eqref{disk2} is positive whenever the assumptions of Theorem \ref{RoundIsMin} are
satisfied. The main obstacle is to bound the first term in \eqref{disk2} which will be done by using the equation
\eqref{udotflat}. To this aim we need the following simple lemma, which we prove to keep track of the optimal
constant. 
\begin{lemma}
\label{bdrest}
Suppose that $v \in  W^{1,2}(A(R_0,r); \R^2)$ is a continuous map with $v = 0$ on $\partial B_{R_0}$ and $A$ is a
matrix. Then for $w(z) = v(z) + Az$ we have that
\[
\int_{\partial B_r} | w|^2 \, d \Ha^1 \leq r \log \left(\frac{R_0}{r} \right) \, \int_{A(R_0,r)} 
\Big|D v -\frac{r}{R_0-r}A\Big|^2 \, dz.
\]

\end{lemma}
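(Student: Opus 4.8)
The plan is to prove the estimate by integrating along radial segments and then using a weighted Cauchy--Schwarz inequality. Fix $\theta\in\R$ and consider the function $g(\rho):=w(\rho\sigma(\theta))=v(\rho\sigma(\theta))+A\rho\sigma(\theta)$ for $\rho\in[r,R_0]$. Since $v\equiv 0$ on $\partial B_{R_0}$ we have $g(R_0)=A R_0\,\sigma(\theta)$, and differentiating in $\rho$ gives $g'(\rho)=Dv(\rho\sigma(\theta))[\sigma(\theta)]+A\sigma(\theta)$. Writing $g(r)=g(R_0)-\int_r^{R_0}g'(\rho)\,d\rho$ and using $\int_r^{R_0}\frac{r}{R_0-r}\,d\rho=r$, the terms involving $A$ combine, and I expect to obtain the key identity
\[
w(r\sigma(\theta))=-\int_r^{R_0}\Big(Dv(\rho\sigma(\theta))-\tfrac{r}{R_0-r}A\Big)[\sigma(\theta)]\,d\rho .
\]
This is the algebraic heart of the statement; the role of $v\equiv 0$ on $\partial B_{R_0}$ is exactly to produce the constant $\frac{r}{R_0-r}$ appearing in the claim.

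From this identity I would estimate pointwise, using $|\sigma(\theta)|=1$,
\[
|w(r\sigma(\theta))|\le \int_r^{R_0}\Big|Dv(\rho\sigma(\theta))-\tfrac{r}{R_0-r}A\Big|\,d\rho,
\]
and then split $d\rho=\rho^{-1/2}\cdot\rho^{1/2}\,d\rho$ and apply Cauchy--Schwarz, which yields
\[
|w(r\sigma(\theta))|^2\le \log\!\Big(\frac{R_0}{r}\Big)\int_r^{R_0}\Big|Dv(\rho\sigma(\theta))-\tfrac{r}{R_0-r}A\Big|^2\rho\,d\rho,
\]
since $\int_r^{R_0}\rho^{-1}\,d\rho=\log(R_0/r)$. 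Integrating in $\theta$ over $[0,2\pi)$ with $d\Ha^1=r\,d\theta$ on $\partial B_r$ and recognizing $\int_0^{2\pi}\int_r^{R_0}(\cdots)\rho\,d\rho\,d\theta$ as the polar-coordinate expression for $\int_{A(R_0,r)}|Dv-\frac{r}{R_0-r}A|^2\,dz$ gives the inequality with the stated constant $r\log(R_0/r)$.

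The only point that needs care is the use of the fundamental theorem of calculus for the merely $W^{1,2}$ map $v$. The standard remedy is to invoke the absolute continuity on lines: for a.e. $\theta$ the map $\rho\mapsto v(\rho\sigma(\theta))$ is absolutely continuous on $[r,R_0]$ with derivative $Dv(\rho\sigma(\theta))[\sigma(\theta)]$, so the identity and the subsequent estimate hold for a.e. $\theta$; continuity of $v$ makes $w|_{\partial B_r}$ genuinely defined and guarantees $v(R_0\sigma(\theta))=0$ for every $\theta$. Alternatively one approximates $v$ by smooth maps vanishing near $\partial B_{R_0}$, proves the inequality for those, and passes to the limit using that both sides are continuous under $W^{1,2}$-convergence together with the trace theorem on $\partial B_r$. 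Either way this is routine; I do not expect any genuine obstacle, the substance being the cancellation that produces precisely the optimal constant.
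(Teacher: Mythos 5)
Your proposal is correct and follows essentially the same route as the paper: integrate $Dw=Dv+A$ along radial segments, use $v=0$ on $\partial B_{R_0}$ and $\int_r^{R_0}\frac{r}{R_0-r}\,d\rho=r$ to absorb the $A$-terms into the constant $\frac{r}{R_0-r}$, then apply the weighted Cauchy--Schwarz split $\rho^{-1/2}\cdot\rho^{1/2}$ and integrate in $\theta$. Your remarks on absolute continuity on lines only make explicit a point the paper leaves implicit; no difference in substance.
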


\begin{proof}
Consider $w$ in polar coordinates. Fix an angle $\theta$ and integrate over $[r,R_0]$ 
\[
  A  R_0\sigma(\theta) - w(r\sigma(\theta))  =  \int_r^{R_0}   Dw\left(\rho \sigma(\theta)\right) \sigma(\theta) \, d
\rho,
\]
which implies
\[
|w(r\sigma(\theta))| \leq   \int_r^{R_0} \Big| Dv(\sigma(\theta)) - \frac{r}{R_0-r}A\Big| \, d \rho.
\]
Integrate over $\theta$ and use H\"older's inequality to obtain
\[
\begin{split}
\int_0^{2 \pi} |w(\rho\sigma(\theta))|^2 \, d \theta &\leq \int_0^{2 \pi} \left( \int_r^{R_0}
\Big|Dv(\rho\sigma(\theta)) - \frac{r}{R_0-r}A\Big| \, d \rho \right)^2 \, d \theta \\
&\leq \int_0^{2 \pi} \left( \int_r^{R_0} \frac{1}{\rho}  \, d \rho \cdot \int_r^{R_0} \Big|Dv(\rho\sigma(\theta)) -
\frac{r}{R_0-r}A\Big|^2 \, \rho\, d \rho \right) \, d \theta \\
&=  \log \left(\frac{R_0}{r} \right) \, \int_{A(R_0,r)} \Big|Dv - \frac{r}{R_0-r}A\Big|^2 \, dz .
\end{split}
\]
The inequality follows from $\int_{\partial B_r} |w|^2 \, d \Ha^1 = r \int_0^{2 \pi} |w(r, \theta)|^2 \, d \theta$.
\end{proof}

\begin{proof}[Proof of Theorem \ref{RoundIsMin}]
As we stated before, by the local minimality criterion it is enough to prove that the second variation of $\F$ at
$(h,u)$ is positive. Suppose that  $\psi \in H_{\sharp}^1(\R)$ satisfies $\int_0^{2 \pi} \psi \, d \theta = 0$ and
$\psi \neq 0$.  Without loss of generality we may assume $\psi$ to be smooth. To estimate the first term in
\eqref{disk2} we claim that
\begin{equation}
\label{est.udot}
2 \int_{A(R_0,r)} Q( E(u_{\psi})) \, dz \leq \frac{32 ( \mu + \lambda)^2 b^2}{\eta} r \log\left( \frac{R_0}{r} \right)
\int_{\partial B_r}   \langle \vect{\psi}, \nu \rangle^2 +  | \partial_{\tau} \langle \vect{\psi}, \nu \rangle |^2 \, d
\Ha^1\,.
\end{equation}
To this aim, choose $w(z)= u_{\psi}(z) + Az$ as a
test function in \eqref{udotflat} where $A$ is antisymmetric, to obtain
\begin{equation}
\label{plug.test}
\begin{split}
2 \int_{A(R_0,r)} & Q( E(u_{\psi})) \, dz =- 4b(\mu + \lambda ) \int_{\partial B_r} \left(-  \langle \vect{\psi}, \nu
\rangle \langle w, \nu \rangle +   \partial_{\tau} \langle \vect{\psi}, \nu \rangle   \langle w, \tau \rangle \right) \,
d \Ha^1 \\
&\leq 4b(\mu + \lambda ) \left( \int_{\partial B_r}     \langle \vect{\psi}, \nu \rangle^2 +  | \partial_{\tau} \langle \vect{\psi}, \nu \rangle |^2  \, d \Ha^1 \right)^{1/2} \left(  \int_{\partial B_r}  | w |^2 \, d \Ha^1 \right)^{1/2}.
\end{split}
\end{equation}
Apply Lemma \ref{bdrest} to $w$ to get
\begin{equation}
\label{apply.lemma}
\int_{\partial B_r}  | w|^2 \, d \Ha^1 \leq  r \log \left(\frac{R_0}{r} \right) \, \int_{A(R_0,r)}
\Big|D u_{\psi} - \frac{r}{R-r}A\Big|^2 \, dz.
\end{equation}
Let $R_k \to \infty$ and for every $k$ choose an antisymmetric $A_k$ such that 
\[
\int_{A(R_k,r)} Du_{\psi}  - \frac{r}{R-r}A_k \, dz = \int_{A(R_k,r)} Du_{\psi}^T + \frac{r}{R-r}A_k \, dz. 
\]
By Theorem \ref{kornann} we get
\[ 
\int_{A(R_k,r)}  \Big|D u_{\psi} - \frac{r}{R-r}A_k\Big|^2 \, dz \leq C_k \int_{A(R_k,r)}  |E( u_{\psi})|^2 \, dz =  C_k
\int_{A(R,r)}  |E( u_{\psi})|^2 \, dz\,.
\]
Together with \eqref{apply.lemma} this yields
\[
\int_{\partial B_r}  | w|^2 \, d \Ha^1 \leq  r \log \left(\frac{R_0}{r} \right) C_k \int_{A(R_0,r)}  |E( u_{\psi})|^2 \,
dz.
\]
Since  $C_ k \to 4$ as $R_k \to \infty$ we have that
\[
\int_{\partial B_r}  |w|^2 \, d \Ha^1 \leq \frac{4r}{\eta}\log \left(\frac{R_0}{r} \right) \int_{A(R_0,r)}  Q(E(
u_{\psi})) \, dz.
\]
Now \eqref{est.udot} follows from \eqref{plug.test} and from the previous inequality.

We estimate \eqref{disk2} by using \eqref{est.udot} and obtain
\begin{equation}
\label{2nd_1st.est}
\begin{split}
\partial^2 \F(h, u)[\psi] &\geq  -  32 \eta^{-1} ( \mu + \lambda)^2 b^2 \, r \log\left( \frac{R_0}{r} \right)
\int_{\partial B_r} \langle \vect{\psi}, \nu \rangle^2 +  | \partial_{\tau} \langle \vect{\psi}, \nu \rangle |^2  \, d
\Ha^1 \\
&\, \quad+ \int_{ \partial B_r} |  \partial_{\tau} \langle \vect{\psi}, \nu \rangle |^2  \,  d \Ha^1  + \int_{ \partial B_r}  \left(  \frac{8(\mu + \lambda)^2}{\mu} \frac{b^2}{r}  -   \frac{1}{r^2} \right)\,  \langle \vect{\psi}, \nu \rangle^2 \,  d \Ha^1 \\
&=    \int_{ \partial B_r} |  \partial_{\tau} \langle \vect{\psi}, \nu \rangle  |^2  - \frac{1}{r^2}  \,  \langle \vect{\psi}, \nu \rangle^2 \,  d \Ha^1 \\
&\,\quad-  32 \eta^{-1} ( \mu + \lambda)^2 b^2 \, r \log\left( \frac{R_0}{r} \right) \int_{\partial B_r} 
|\partial_{\tau} \langle \vect{\psi}, \nu \rangle |^2  \, d \Ha^1 \\
&\quad+ \left( \frac{r}{\mu} - 4\eta^{-1} r^3 \log\left( \frac{R_0}{r} \right)\right) \,8 (\mu + \lambda)^2 b^2 \, 
\int_{ \partial B_r}  \frac{1}{r^2}  \langle \vect{\psi}, \nu \rangle^2\,  d \Ha^1 .
\end{split}
\end{equation}
Let us first treat the last term in (\ref{2nd_1st.est}). For
every $r > r_0$ we have that 
\[
\begin{split}
\partial^2 \F(h, u)[\psi] &>  \left(1- 32 \eta^{-1} ( \mu + \lambda)^2 b^2 \, r \log\left( \frac{R_0}{r} \right)
\right) \int_{
\partial B_r}  | \partial_{\tau} \langle \vect{\psi}, \nu \rangle |^2  - \frac{1}{r^2}  \langle \vect{\psi}, \nu
\rangle^2 \,  d \Ha^1 .
\end{split}
\]
Furthermore, if \eqref{vesacond} is satisfied, then
\[
1- 32 \eta^{-1} ( \mu + \lambda)^2 b^2 \, r \log\left( \frac{R_0}{r} \right) > 0\,.
\]
By the definition
\eqref{not.vect} we see that $\langle \vect{\psi}, \nu \rangle = \psi \left( \sigma^{-1} \left(\frac{z}{|z|} \right) 
\right)$.
Hence, by the Wirtinger's inequality, we get 
\[ 
\int_{ \partial B_r}  | \partial_{\tau} \langle \vect{\psi}, \nu \rangle |^2  - \frac{1}{r^2}  \langle \vect{\psi}, \nu
\rangle^2 \,  d \Ha^1 =  \frac{1}{r}\int_0^{2 \pi} | \psi'(\theta) |^2  -  |\psi(\theta)|^2\,  d \theta \geq 0\,.
\] 
which concludes the proof.
\end{proof}

At the end of the section we study the global minimality of the round configuration. We begin with the following remark.

\begin{remark}
\label{global}
Suppose that $R_0$ and $r_0$ are as in Theorem \ref{RoundIsMin} and fix  $\alpha \in \R$ and a small $\eps >0$. Then for every $r \in [r_0 + \eps, R_0 ]$ such that $r \geq G(\alpha) + \eps$ the proof above actually gives 
\[
\partial^2 \F(h, u)[\psi] \geq  c_2 \int_0^{2 \pi} | \psi'(\theta) |^2  -  |\psi(\theta)|^2\,  d \theta + c_1 \int_0^{2 \pi}  |\psi(\theta)|^2\,  d \theta,
\]   
for some small  $0< c_1 < c_2$, independent of $r$.  Using the Wirtinger's inequality we get
\[
\partial^2 \F(h, u)[\psi] \geq c_0 || \psi ||_{H^1([0,2\pi))}^2,
\]   
for $c_0$ depending only on  $R_0, r_0, \alpha$ and $\eps$.  This is a uniform version of Lemma \ref{easylemma}.

We can use this uniform bound of the constant $c_0$  to prove a uniform  local $C^{1,1}$-minimality of the round
configuration for  $r \in [r_0 + \eps, R_0]$ with $r \geq G(\alpha) + \eps$. Indeed, arguing as in Proposition
\ref{H2-localmin} and in Lemma \ref{2ndpostive} we conclude that there is $\delta>0$ such that for any $(g,v) \in
X(u_0)$ with $|F_g| = |B_{r}|$ and $||g- r  ||_{C^{1,1}(\R)}\leq \delta $ it holds
\[
\F(g,v) \geq \F(r, u_r),
\]
where $u_r$ stands for the elastic equilibrium associated to the disk $B_r$.
\end{remark}

The previous remark enables us to prove the global minimality of the disk when the volume of the annulus  is small.

\begin{proposition}
Suppose that $R_0$ is the radius of the large ball and $u_0$ is the Dirichlet boundary conditions as in \eqref{dirichlet} with fixed $\alpha >0$. There exists $r_{glob} < R_0$ such that for every $r \in (r_{glob}, R_0)$ the round configuration, with a disk $B_r$, is a global minimizer of $\F$ under the volume constraint.
\end{proposition}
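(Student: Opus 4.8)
The plan is to argue by contradiction, combining the \emph{uniform} local $C^{1,1}$-minimality recorded in Remark \ref{global} with a compactness/regularity scheme analogous to the one used in the proof of Theorem \ref{TH:criterion}. Suppose the statement fails. Then there are radii $r_n\to R_0$ and pairs $(g_n,v_n)\in X(u_0)$ with $|F_{g_n}|=|B_{r_n}|$ and $\F(g_n,v_n)<\F(r_n,u_{r_n})$, where $(r_n,u_{r_n})$ denotes the round configuration with cavity $B_{r_n}$. Replacing $(g_n,v_n)$ by a minimizer of $\min\{\F(g,v)\mid (g,v)\in X(u_0),\ |F_g|=|B_{r_n}|\}$ (which exists by \cite{FFLmill}*{Theorem 3.2}) only decreases the energy, so we may assume $(g_n,v_n)$ is such a constrained minimizer. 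The aim is to prove that $g_n\to R_0$ in $C^{1,1}(\R)$ and then contradict Remark \ref{global}.

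First I would record the asymptotics of the round configuration. By \eqref{energycal}, \eqref{bee} and $\beta(r_n)\to 1+(\mu+\lambda)/\mu$, the density $Q(E(u_{r_n}))$ is bounded uniformly on the thin annulus $A(R_0,r_n)$, so
\[
\F(r_n,u_{r_n})=2\pi r_n+\int_{A(R_0,r_n)}Q(E(u_{r_n}))\,dz\ \leq\ 2\pi r_n+C\,(R_0^2-r_n^2)\ \longrightarrow\ 2\pi R_0 .
\]
On the other hand, $\F(g_n,v_n)\geq \Ha^1(\Gamma_{g_n})+2\Ha^1(\Sigma_{g_n})\geq \Ha^1(\partial F_{g_n})\geq P(F_{g_n})\geq 2\pi r_n$ by the isoperimetric inequality, so the assumption $\F(g_n,v_n)<\F(r_n,u_{r_n})$ forces
\[
\Ha^1(\Gamma_{g_n})\to 2\pi R_0,\qquad \Ha^1(\Sigma_{g_n})\to 0,\qquad \int_{\Omega_{g_n}}Q(E(v_n))\,dz\to 0 .
\]
Moreover the volume constraint $\int_0^{2\pi}g_n^2\,d\theta=2\pi r_n^2$ together with $0\leq g_n\leq R_0$ yields $\int_0^{2\pi}(R_0-g_n)^2\,d\theta\leq\int_0^{2\pi}(R_0^2-g_n^2)\,d\theta=2\pi(R_0^2-r_n^2)\to 0$, i.e.\ $g_n\to R_0$ in $L^2$.

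Next I would upgrade this convergence to $C^{1,1}$. Since $(g_n,v_n)$ is a constrained minimizer with $0<g_n\leq R_0$ and vanishing elastic energy in the limit, the regularity theory for \eqref{min.problem} (see \cite{Fonseca:2007bj} and \cite{FFLmill}) applies: for $n$ large the singular set of $g_n$ is empty (so $\Sigma_{g_n}=\emptyset$), $g_n$ satisfies a uniform exterior ball condition, and the arguments of Lemma \ref{lem-balltohaus} (Go\l\k{a}b's theorem, using $\Ha^1(\Gamma_{g_n}\cup\Sigma_{g_n})\to 2\pi R_0$ and $g_n\to R_0$ a.e.), of Lemma \ref{lem-more-regular} (quasiminimality of the area functional, together with $\int_{\Omega_{g_n}}Q(E(v_n))\to 0$), and of Lemma \ref{lem-convergence} (the Euler--Lagrange equation for $(g_n,v_n)$, now without obstacle and without the penalization terms) carry over to give, in order, $g_n\to R_0$ uniformly, then $g_n\to R_0$ in $C^{1,\alpha}(\R)$ for every $\alpha<1/2$ with $\nabla v_n$ equibounded in $C^{0,\alpha}$, and finally $g_n\to R_0$ in $C^{1,1}(\R)$. \emph{This is the step I expect to be the main obstacle}: the natural reference configuration here is a round configuration of radius $r_n\to R_0$, so the annulus degenerates and a number of constants in the cited estimates (the exterior ball radius, the $C^{1,\alpha}$-bounds for $g_n$ and for $v_n$, the Lagrange multiplier in the Euler--Lagrange equation) a priori deteriorate as $r_n\to R_0$; one has to verify that they remain controlled, which is exactly the degenerate regime already dealt with in order to establish the uniform version of Lemma \ref{easylemma} stated in Remark \ref{global}.

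Finally, fix $\eps>0$ small enough that $r_0+\eps<R_0$ and $G(\alpha)+\eps<R_0$, with $r_0$ and $G$ as in Theorem \ref{RoundIsMin}. For $n$ large we have $r_n\in[r_0+\eps,R_0]$ and $r_n\geq G(\alpha)+\eps$, and
\[
\|g_n-r_n\|_{C^{1,1}(\R)}\ \leq\ \|g_n-R_0\|_{C^{1,1}(\R)}+(R_0-r_n)\ \leq\ \delta ,
\]
where $\delta$ is the uniform constant of Remark \ref{global}. Since also $|F_{g_n}|=|B_{r_n}|$, that remark gives $\F(g_n,v_n)\geq \F(r_n,u_{r_n})$, contradicting $\F(g_n,v_n)<\F(r_n,u_{r_n})$. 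Hence no such sequence exists, and the round configuration is a global minimizer under the volume constraint for all $r$ sufficiently close to $R_0$; taking $r_{glob}<R_0$ to be any threshold for which the contradiction scheme applies proves the proposition.
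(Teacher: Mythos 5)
Your contradiction scheme is the right one and matches the paper's in outline (assume competitors $(k_n,w_n)$ beat the round configuration for $r_n\nearrow R_0$, force the surface measure to $2\pi R_0$ and the elastic energy to $0$, upgrade to $C^{1,1}$-convergence, and contradict the uniform local $C^{1,1}$-minimality of Remark \ref{global}). But the step you yourself flag as ``the main obstacle'' is a genuine gap, and your claim that it is ``already dealt with'' in Remark \ref{global} is not correct. Remark \ref{global} only establishes uniformity of the coercivity constant $c_0$ of the second variation in $r$; it says nothing about the constants in the regularity machinery (the decay estimate of \cite{Fonseca:2007bj}, the Schauder/elliptic estimates for $v_n$ up to $\Gamma_{g_n}$, the blow-up argument, the passage to the Euler--Lagrange equation in Lemma \ref{lem-convergence}), all of which are proved for a fixed limit profile $h$ with $0<h<R_0$, i.e.\ with a fixed gap $\gamma$ between the cavity boundary and the outer Dirichlet boundary $\partial B_{R_0}$. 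In your setting the limit profile is $h\equiv R_0$: the annulus $\Omega_{g_n}$ becomes thin and the free boundary collides with the fixed boundary carrying the nonzero data $u_0$, so none of Lemmata \ref{lem-balltohaus}--\ref{lem-convergence}, nor Proposition \ref{H2-localmin} (which also assumes $0<h<R_0$), applies as stated, and their constants do degenerate.

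The paper's resolution, which is the key idea missing from your proposal, is to enlarge the elastic body: since $u_n$ is also the elastic equilibrium in $A(R,r_n)$ for any $R>R_0$ with respect to its own trace on $\partial B_R$, one introduces $\F_{R_1}$ and $X_{R_1}(u_n)$ for a fixed $R_1>R_0$ chosen (by continuity in the estimate \eqref{2nd_1st.est}) close enough to $R_0$ that the second variation of $\F_{R_1}$ at $(r_n,u_n)$ is still coercive with constant $c_0/2$. The competitor $w_n$ is extended by $u_n$ on $A(R_1,R_0)$, and the auxiliary penalized obstacle problem is posed in $X_{R_1}(u_n)$ with obstacle $g\le r_n+\eps_n$; now the cavity boundary stays at distance at least $R_1-R_0$ from the fixed boundary, so the regularity lemmata and the uniform $C^{1,1}$-local minimality apply with constants independent of $n$. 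A secondary issue: you replace the contradicting sequence by a global constrained minimizer rather than by a minimizer of the penalized obstacle problem, but the obstacle $g\le r_n+\eps_n$ and the volume penalization are used structurally in Lemmata \ref{lem-balltohaus}--\ref{lem-convergence} (e.g.\ $g_n'=h_n'$ off the contact set is how the Euler--Lagrange equation is closed), so ``the arguments carry over without obstacle'' would need separate justification. Without the domain-enlargement device the proof does not close.
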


\begin{proof}
We argue by contradiction and assume that there exist a sequence of radii $r_n\nearrow R_0$ and a
sequence $(k_n,w_n)\in X(u_0)$ of minimizers of $\F$ under the volume constraint 
$\abs{\Omega_{k_n}}=\abs{A(R_0,r_n)}$ such that
\[
 \F(k_n,w_n)<\F(r_n,u_n)\,,
\]
where $u_n$ stands for the elastic equilibrium relative to $r_n$. Since  $(k_n,w_n)$ minimizes $\F$ we immediately have that $\Ha^1(\Gamma_{k_n} \cup \Sigma_{k_n})\to 2\pi R_0$. Therefore, since $F_{k_n}$ is connected, we deduce that $\eps_n := d_\Ha(\Gamma_{k_n} \cup \Sigma_{k_n},\Gamma_{r_n})\to 0$ as $n\to\infty$.

We may calculate the elastic equilibrium 
\[
 u_n(\rho,\theta)= \left(\frac{a_n}{\rho} +b_n \rho\right)   \sigma(\theta),
\]
where
\[
b_n =\left( 1+  \frac{\mu + \lambda}{\mu} \frac{r_n^2}{R_0^2} \right)^{-1}\alpha \quad \text{and} \quad  a_n = \frac{\mu + \lambda}{\mu}r_n^2 \,  b_n.
\]
By Remark \ref{global} we have that for large $n$ it holds
\[
\partial^2 \F(r_n, u_n)[\psi] \geq c_0 || \psi ||_{H^1(\partial B_{r_n})}^2,
\]
for $ \int_0^{2 \pi}\psi \, d \theta = 0$, where $c_0$ is independent of $n$.

We note that $u_n$ is also the elastic equilibrium in the annulus $A(R, r_n)$, for any $R>R_0$, with respect to its own boundary conditions on $\partial B_{R}$, $v(R, \theta) = u_n(R, \theta) $. For $R > R_0$ we define
\[
\F_{R}(g,v) = \int_{B_{R} \setminus F_g} Q(E(v))\, dz + \Ha^1(\Gamma_g) + 2\Ha^1(\Sigma_g)
\]
and 
\[
X_{R}(u_n) = \{   (g,v) \mid g  \in BV_{\sharp}(\R), \, v \in H_{\loc}^1(\R^2 \setminus F_g; \R^2), \, v = u_n \,\,
\text{outside} \,\, B_{R} \}\,.
\]
Consider the estimate \eqref{2nd_1st.est} for $\partial^2 \F_{R_1}(r_n, u_n)[\psi]$, i.e., replace $R_0, r $ and $b$ by $R_1, r_n $ and  $b_n$. By continuity we may choose $R_1$ close to $R_0$ such that  
\[
\partial^2 \F_{R_1}(r_n, u_n)[\psi] \geq \frac{c_0}{2} || \psi ||_{H^1(\partial B_{r_n})}^2,
\]
for $ \int_0^{2 \pi}\psi \, d \theta = 0$. Arguing as in Remark \ref{global} we conclude that $(r_n, u_n)$ is a
local $C^{1,1}$-minimizer of $\F_{R_1}$ uniformly in $n$, i.e., there is $\delta>0$, independent of $n$, such that for
any $(g,v) \in X_{R_1}(u_n)$, with $||g- r_n||_{ C^{1,1}(\R)} < \delta$,  it holds
\begin{equation}
\label{glob.min}
\F_{R_1}(g,v) \geq \F_{R_1}(r_n, u_n). 
\end{equation}

Define
\[
 \tilde{w}_n(z):=\begin{cases}
w_n(z) & \text{if } z \in \bar{B}_{R_0} \setminus F_{k_n}\\
u_n(z) & \text{if } z\in A(R_1,R_0)\,.
\end{cases}
\]
By the assumption on $(k_n, w_n)$ it holds 
\begin{equation}
\label{glob.cont}
\F_{R_1}(k_n,  \tilde{w}_n) <\F_{R_1}(r_n, u_n).
\end{equation} 
Suppose that $(g_n,v_n)$ is a  solution of the problem
\[
\min \{  \F_{R_1}(g,  v) + \Lambda \big|  | F_g| - |B_{r_n}|  \big|   : (g,v) \in X_{R_1}(u_n), \, g \leq r_n + \eps_n   \},
\]
where $\Lambda$ is large. Arguing as in Lemma \ref{lem-balltohaus}, Lemma \ref{lem-more-regular} and  Lemma
\ref{lem-convergence} we conclude that $g_n \to R_0$ in $C^{1,1}(\R)$. 
In particular, $||g_n - r_n ||_{C^{1,1}(\R)} \to 0$. 

By the minimality of $(g_n, v_n)$ we have  that $\F_{R_1}(g_n,  v_n)  + \Lambda \big|  | F_g| - |B_{r_n}|  \big| \leq  \F_{R_1}(k_n,  \tilde{w}_n) $. Defining $\tilde{g}_n = g_n + \delta_n$ such that $|F_{\tilde{g}_n}| = |B_{r_n}| $ we obtain, as in  \eqref{absorbed}, that 
\begin{equation}
\label{scale.vol}
\F_{R_1}(\tilde{g}_n,  v_n) \leq \F_{R_1}(g_n,  v_n)  + \Lambda \big|  | F_g| - |B_{r_n}|  \big|  \leq  \F_{R_1}(k_n,  \tilde{w}_n),
\end{equation}
 when $\Lambda$ is large enough. Moreover  $\delta_n \to 0$. Hence  $||\tilde{g}_n - r_n ||_{C^{1,1}(\R)} \to 0$ and
therefore  \eqref{glob.min}, \eqref{glob.cont} and \eqref{scale.vol} imply
\[
 \F_{R_1}(r_n, u_n) \leq \F_{R_1}(\tilde{g}_n,v_n) \leq    \F_{R_1}(k_n,  \tilde{w}_n) <  \F_{R_1}(r_n, u_n),
\]
which is a contradiction.

\end{proof}

\section*{Acknowledgements}
This research was partially funded by the
2008 ERC Advanced Grant no.\ 226234 \emph{Analytic Techniques for
Geometric and Functional Inequalities} and by the Marie Curie project IRSES-2009-247486 of the Seventh Framework
Programme.

The authors are thankful to N.\ Fusco for introducing the problem and for helpful discussions.

\bibliography{biblio}

\end{document}